\documentclass[11pt,reqno]{amsart}
\usepackage{graphicx}
\usepackage{color}
\usepackage{amsmath,amssymb,amsthm,amsfonts}
\usepackage{mathdots}
\usepackage{graphicx}
\usepackage{color}
\usepackage{multicol}
\topmargin=-0.1in \oddsidemargin3mm \evensidemargin3mm
\textheight240mm \textwidth162mm

\def\R{\mathbb{R}}

\makeatletter

\newcommand{\Rmnum}[1]{\expandafter\@slowromancap\romannumeral #1@}
\makeatother

\newtheorem{thm}{Theorem}[section]

\newtheorem{lemma}[thm]{Lemma}
\newtheorem{remark}[thm]{Remark}
\newtheorem{theorem}[thm]{Theorem}

\usepackage[numbers,sort&compress]{natbib}
\setlength{\bibsep}{0.5ex}

\allowdisplaybreaks

\begin{document}

\author{Hai-Yang Jin}
\address{School of Mathematics, South China University of Technology, Guangzhou 510640, P.R. China}
\email{mahyjin@scut.edu.cn}

\author{Zhi-An Wang\textsuperscript{$ \ast $}}\thanks{$^{\ast}$Corresponding author.}
\address{Department of Applied Mathematics, Hong Kong Polytechnic University, Hung Hom, Hong Kong,  P.R. China}
\email{mawza@polyu.edu.hk}

\author{Leyun Wu}\address{Leyun Wu
\newline\indent Department of Applied Mathematics\newline\indent Hong Kong Polytechnic University\newline\indent Hung Hom, Hong Kong,  P.R. China\newline\indent and
\newline\indent School of Mathematical Sciences, MOE-LSC,
\newline\indent Shanghai Jiao Tong University, Shanghai, P.R. China}
\email{leyunwu@126.com}

\title[Global solvability and stability of an alarm-taxis system]{Global solvability and stability of an alarm-taxis system}

\begin{abstract}
This paper is concerned with the global boundedness and stability of classical solutions to an alarm-taxis system describing the burglar alarm hypothesis as an important mechanism of anti-predation behavior when species are threaten by predators. Compared to the existing prey-taxis systems, the alarm-taxis system has more complicated coupling structure and additionally requires the gradient estimate of the primary predator density to attain the global boundedness of solutions. By the sophisticated coupling energy estimates based on the Neumann semigroup smoothing properties, we establish the existence of globally bounded solutions in two dimensions with Neumann boundary conditions and furthermore prove the global stability of co-existence homogeneous steady states under certain conditions on the system parameters.
\end{abstract}

\subjclass[2000]{35A01, 35B40, 35B44, 35K57, 35Q92, 92C17}

\keywords{Alarm-taxis, global boundedness, global stability, coexistence steady states}

\maketitle

\numberwithin{equation}{section}
\section{Introduction and main results}

Alarm calls are an important mechanism of anti-predation behavior when species are approached by predators, where alarm call signals may be chemical, acoustic, sound, visible movement, or any other changes that are detectable by the receiver (cf. \cite{Klump, Chiver}).  There are numerous hypotheses on the structure and function of alarm calls, among which is the ``burglar alarm'' hypothesis (cf. \cite{Burkenroad}): a prey species renders itself dangerous to a primary predator by generating an alarm call to attract a second predator at higher trophic levels in the food chain that prey on the primary predator. This attraction of a secondary predator has been observed in the marine environment where dinoflagellates bioluminesce when stimulated by disturbances from copepod feeding currents may attract a secondary predator like fish (cf. \cite{Abrahams}), and in many other species (like plants \cite{Dick}, birds \cite{Hogstedt}, primates \cite{Fichtel}). To test this hypothesis, a mathematical model was recently proposed in \cite{HB-TPB-2021}, which reads in its multi-dimensional form as
\begin{equation}\label{alarm-model}
\begin{cases}
u_{t}=d_1 \Delta u+f(u, v, w), \\
v_{t}=d_2 \Delta v-\nabla \cdot (\xi v \nabla u)+g(u, v, w),\\
w_{t}=\Delta w-\nabla \cdot (\chi w \nabla \phi(u, v))+h(u, v, w),
\end{cases}
\end{equation}
where $u, v$ and $w$  represent resource or prey (e.g. dinoflagellate),   primary predator (e.g. copepod) and secondary predator (e.g. fish), respectively; $d_1, d_2$ are positive constants representing the random dispersal rates, the positive constants $\xi$ and $\chi$ are referred to as the prey-taxis and alarm-taxis coefficients, respectively; the reaction functions $f, g, h$ describe the interspecific and/or intraspecific interactions among the prey, the primary predator and the second predator,  and $\phi(u,v)$ is a signalling function describing the intensity of the alarm signal which is produced as a result of interaction between the prey and primary predator to act as a burglar alarm attracting the secondary predator. While there are many ways one could postulate the signal intensity function $\phi(u,v)$, a simple but plausible assumption is that the signal intensity is proportional to the encounter rate between the prey and the primary predator, that is (cf. \cite{HB-TPB-2021})
$$\phi(u,v) \propto uv.$$
{\color{black}Generally the reaction functions $f,g,h$ have the prototypical forms
\begin{eqnarray}\label{reaction}
\begin{aligned}
&f(u,v,w)=\phi_1(u)-b_1 vF_1(u,v)-b_3 wF_2(u,w),\\
&g(u,v,w)=\phi_2(v)+c_1vF_1(u,v)-b_2 wF_3(v,w),\\
&h(u,v,w)=\phi_3(w)+c_2wF_3(v,w)+c_3wF_2(u,w)
\end{aligned}
\end{eqnarray}
where $b_1,b_2,c_1,c_2>0$ and $b_3,c_3\geq0$ are constants, $\phi_i\,(i=1,2,3)$ describe the intra-specific interactions of species. For the prey species, $\phi_1(u)=\mu_1u\left(1-\frac{u}{K}\right)$ where $\mu_1>0$ denotes the intrinsic growth rate and $K>0$ is the carrying capacity. For the predators $v$ an $w$, $\phi_i(s)=\mu_i s-\theta_i s^2 (i=2,3)$, where $\mu_i>0$ (reps. $<0$) denotes the intrinsic growth (reps. death) rate of species and $\theta_i\geq0$ denotes the intra-specific competition strength where $\theta_i=0$ means there is no intra-specific competition between species. In particular, if $\mu_i(i=2,3)>0$ (reps. $<0$), then the corresponding predator is called a generalist (reps. specialist) predator.   $F_i\,(i=1,2,3)$ are called the functional response (or trophic) functions describing the reproduction rate of a predator as a function of prey density, which may have many possible forms such as Holling type I, II and III, ratio-dependent or Beddington-DeAngelis type, and so on (cf. \cite{Murdoch, Turchin}). We remark that without prey-taxis and alarm-taxis, the  model \eqref{alarm-model} is generally called a food chain model (cf. \cite{Hasting-Powell}) if $b_3=c_3=0$ (i.e. the second predator $w$ does not utilize the resource), and an intraguild predation model (cf. \cite{Holt}) if $b_3,c_3>0$  (i.e. the second predator $w$ can utilize the resource of its prey $v$)}. For the alarm-taxis model, the first qualitative result was obtained in \cite{HB-TPB-2021} for the following one dimensional form with Neumann boundary conditions
\begin{equation}\label{pretaxis2}
\begin{cases}
u_t=d_1  u_{xx}+ \mu_1 u(1-u)-b_1 uv-b_3 u w, &x\in (0,L),\,\, t>0,\\
v_t=d_2 v_{xx}-(\xi vu_x)_x +\mu_2 v(1-v)+c_1 u v-b_2 v w, &x\in (0,L),\,\, t>0,\\
w_t=w_{xx}-(\chi w(vu_x+uv_x))_x +\mu_3 w(1-w)+c_2 {\color{black}vw}+c_3 uw, & x\in (0,L),\,\, t>0,\\
u_x=v_x=w_x=0, &x=0,L,\,\, t>0,\\
(u, v, w)(x,0)=(u_0, v_0, w_0)(x),&x\in (0,L),
\end{cases}
\end{equation}
where $L>0$,  $d_1, d_2, \mu_1, \mu_2, \mu_3, b_1, b_2, \xi, \chi >0$ and $b_3, c_3\geq 0$ are  constants.  The existence of global bounded solutions of \eqref{pretaxis2} was established in \cite{HB-TPB-2021}, and  the global stability of coexistence steady state for the special case $b_3=c_3=0$  was further proved under certain conditions (which will be mentioned later).

The system \eqref{alarm-model}, as the first mathematical model for alarm-taxis proposed in \cite{HB-TPB-2021}, provides basic theoretical framework to understand the  mechanism of anti-predation behavior of the prey by releasing alarm call signals.  The mathematical studies of the alarm-taxis model \eqref{alarm-model} was initiated in \cite{HB-TPB-2021} for the specialized form \eqref{pretaxis2} in one dimension only. Hence there are many interesting questions in demand to gain more insights into the understanding of the alarm-taxis mechanism, for instance the global dynamics of alarm-taxis models in a more realistic multi-dimensional spatial domain and/or with different functional response  functions $F_i$ and so on. This motivates us, among other things, to
 consider the following alarm-taxis system
\begin{equation}\label{pd-2}
\begin{cases}
u_t=d_1 \Delta u+\mu_1u(1-u)-b_1 uv-b_3 \frac{uw}{u+w},&x\in\Omega, t>0\\
v_t=d_2\Delta v-\nabla\cdot(\xi v\nabla u)+\mu_2v(1-v)+uv-b_2 vw,&x\in\Omega, t>0\\
w_t=\Delta w-\nabla \cdot[\chi w(v\nabla u+u\nabla v)]+\mu_3 w(1-w)+vw+c_3\frac{uw}{u+w},&x\in\Omega, t>0,\\
\frac{\partial u}{\partial \nu}=\frac{\partial v}{\partial \nu}=\frac{\partial w}{\partial \nu}=0, &x\in \partial\Omega,\,\, t>0,\\
(u, v, w)(x,0)=(u_0, v_0, w_0)(x),&x\in \Omega
\end{cases}
\end{equation}
in a bounded smooth domain $\Omega \subset \mathbb{R}^N (N\geq 2)$ with parameters $d_1, d_2, \mu_1, \mu_2, \mu_3, b_1, b_2, \xi, \chi >0$ and $b_3, c_3\geq 0$, where $\nu$ denotes the outward normal vector of $\partial \Omega$. Particularly when $b_3=c_3=0$ (case of food chain), the model \eqref{pd-2} is nothing but the multi-dimensional version of \eqref{pretaxis2}. The difference is that when $b_3, c_3>0$ (case of intraguild predation), the model \eqref{pd-2} employs the ratio-dependent functional response while \eqref{pretaxis2} uses the Lotka-Volterra functional response. {\color{black} The food chain model with spatial movements have not been investigated in the literature to the best of our knowledge though its ODE counterpart (i.e. the temporal model) has been extensively studied (cf. \cite{Klebanoff, McCann, Patt} and references therein). Without prey-taxis and alarm-taxis, the intraguild predation models with some particular functional response functions have been analytically studied in \cite{Cantrell-IGP1, Cantrell-IGP2}}. The main goal of this paper is to investigate the global dynamics of the alarm-taxis model \eqref{pd-2} by establishing the global boundedness of solutions in multi-dimensions and the global stability of coexistence steady states for both $b_3=c_3=0$  and $b_3, c_3>0$. To compare, we recall that the work \cite{HB-TPB-2021} obtains the global boundedness of solutions for the one dimensional model \eqref{pretaxis2}  and establish the global stability of coexistence steady states for the case  $b_3=c_3=0$ only. {\color{black} The global boundedness of solutions to \eqref{pretaxis2} in multi-dimensions still remains open and our results show that the global boundedness of classical solutions can be ensured if the interaction between $u$ and $w$ is described by the ratio-dependent functional response.}

From mathematical point of view, the structure of \eqref{alarm-model} with \eqref{reaction} is analogous to the following prey-taxis system
 \begin{equation}\label{prey}
\begin{cases}
u_t=d_1\Delta u-vF(u,v)+f(u), &x\in \Omega,\,\, t>0,\\
v_t=d_2\Delta v-\nabla \cdot ( \chi v  \nabla u)+\gamma v F(u,v)-vh(v), &x\in \Omega,\,\, t>0,\\
\frac{\partial u}{\partial \nu}=\frac{\partial v}{\partial \nu}=0, &x\in \partial\Omega,\,\, t>0,
\end{cases}
\end{equation}
where $F(u,v)$ is the functional response function which may depend on $u$ only like Holling type I, II and III or on both $u$ and $v$ like the ratio-dependent response (cf. \cite{Murdoch, Turchin}). The system \eqref{prey} is a simplified version of the original prey-taxis system proposed in \cite{Kareiva} (see also \cite{JW-EJAM-2021}), and has been studied from different analytical perspectives for different functional response functions in the literature (cf. \cite{ABN-2008, Tao-2010, HZ-2015, Li-Wang-Shao,JW-JDE-2017, WSW-JDE-2016,WW-2018-ZAMP,WX-2021-JMB}). Among other things, if $F(u,v)$ is prey-dependent only (i.e. depends on $u$ only), large-data global solutions were obtained in two dimensions \cite{JW-JDE-2017} while small-data global solutions in three or higher dimensions were attained in \cite{WSW-JDE-2016,WW-2018-ZAMP}. However if  $F(u,v)$ is ratio-dependent, large-data global solutions can be attained in any dimensions \cite{CCW-AA-2020}.  For results of other classes of taxis model in the predator-prey system  such as indirect preytaxis or predator-taxis systems, we refer to \cite{Ahn-Yoon-JDE-2020, Fuest, Wrzosek-mmas, Tell-Wrzosek-M3, Wu-Wang-Shi} and references therein. Compared to the prey-taxis system  \eqref{prey}, the alarm-taxis system \eqref{pd-2} has more intricate coupling structures where in particular a priori $\|\nabla v\|_{L^\infty}$ estimate is required to derive $\| w\|_{L^\infty}$ which however in turn affects the gradient estimates of $u$.
Hence how to untie these tangled coupling estimates to deduce the {\it a priori} estimates of $\|\nabla u\|_{L^\infty}$ and $\|\nabla v\|_{L^\infty}$ is the key to obtain the global boundedness of solutions for \eqref{pd-2}, where the estimate of $\|\nabla v\|_{L^\infty}$ is the main new challenge arising in the model to overcome. We have not found existing works addressing how to obtain the $L^\infty$-estimates for the gradient of predator densities.   In this paper, we shall first fully capture the ratio-dependent functional response structure to get the global estimate of $\|\nabla u\|_{L^\infty}$ and hence $\|v\|_{L^\infty}$. Then we start from some elegant  estimates on the second-order derivative estimate of $u$ based on works \cite{Cao-ZAMP-2016, JCH-2020-Nonlinearity} to derive the global estimate of $\|\nabla v\|_{L^\infty}$ from which the estimate of $\|w\|_{L^\infty}$ follows alongside the application of Neumann semigroup smoothing properties. We remark that a conventional method used to study the global boundedness of solutions to taxis equations (cf. \cite{JW-JDE-2016, JW-JDE-2017, Tao-Winkler-2012}) by resorting to the entropy estimates is also applicable to establish the boundedness of $u$ and $v$, but with more complicated estimates. In this paper, we provide a simpler approach by utilizing the local-in-time integrability of $L^2$-norm of $v,w$ (see Lemma \ref{Le2.3} and Lemma \ref{L2u}) resulting from the quadratic decay in the kinetics terms and by using a second-order estimate (see Lemma \ref{L4}).\\

Our first result concerning the global existence and boundedness of classical solutions of (\ref{pd-2}) is stated  in the following theorem.
\begin{theorem}[Global boundedness]\label{GB}
Let $\Omega\subset\R^2$ be a bounded domain with smooth boundary. Assume $(u_0,v_0,w_0)\in [W^{2,\infty}(\Omega)]^3$ with $u_0, v_0,w_0\gneqq 0$.  Then the problem (\ref{pd-2}) has a unique global classical solution $(u,v,w) \in [C^0(\bar{\Omega}\times[0,\infty))\cap C^{2,1}(\bar{\Omega}\times(0,\infty))]^3$ satisfying $u,v,w> 0$ for all $t>0$. Furthermore there exists a constant $C>0$ independent of $t$ such that
\begin{equation*}
\|u(\cdot,t)\|_{W^{1,\infty}}+\|v(\cdot,t)\|_{W^{1,\infty}}
+\|w(\cdot,t)\|_{L^\infty}\leq C,
\end{equation*}
where in addition $\|u\|_{L^\infty}$ is independent of $\xi$ and $\chi$ while $\|v\|_{L^\infty}$ is independent of $\chi$.
\end{theorem}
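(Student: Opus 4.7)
My plan begins with local existence on a maximal interval $[0,T_{\max})$ via Amann's theory together with strict positivity from the strong maximum principle, and the standard extensibility criterion: either $T_{\max}=\infty$ or $\limsup_{t\to T_{\max}}\bigl(\|u\|_{W^{1,\infty}}+\|v\|_{W^{1,\infty}}+\|w\|_{L^\infty}\bigr)=\infty$. The first (and $\xi,\chi$-independent) bound is immediate: since the $u$-equation contains no taxis and the terms $-b_1uv-b_3uw/(u+w)$ are non-positive, the parabolic comparison principle with supersolution $\max\{1,\|u_0\|_{L^\infty}\}$ gives $\|u\|_{L^\infty}\le\max\{1,\|u_0\|_{L^\infty}\}$. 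Integrating the $v$- and $w$-equations over $\Omega$ and exploiting the logistic damping yields uniform $L^1$ bounds, and then multiplying by $v$ (resp.\ $w$) and integrating produces the space-time bounds $\int_t^{t+1}\!\int_\Omega(v^2+w^2)\le C$ alluded to in Lemmas~\ref{Le2.3} and \ref{L2u}.

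Next I would bootstrap to $\nabla u\in L^\infty$. The source of the $u$-equation is $\mu_1 u(1-u)-b_1uv-b_3uw/(u+w)$; its ratio-dependent part is pointwise bounded by $u$, and the remainder is controlled once a sufficiently strong Lebesgue bound on $v$ is known. Combining the local-in-time $L^2$ integrability of $v$ with two-dimensional Neumann heat semigroup smoothing delivers $\|\nabla u(\cdot,t)\|_{L^\infty}\le C$ uniformly in $\xi,\chi$. Feeding $\nabla u\in L^\infty$ back into the $v$-equation makes the prey-taxis drift a bounded vector field, and a further semigroup/Moser iteration then yields $\|v\|_{L^\infty}\le C$, still independent of $\chi$.

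The key new step is to upgrade $\|v\|_{L^\infty}$ to $\|\nabla v\|_{L^\infty}$, and this is the principal obstacle. Because the drift in the $v$-equation is $\nabla\cdot(\xi v\nabla u)$, a pointwise bound on $\nabla v$ requires control of $\Delta u$ in some $L^p$ with $p$ large enough for semigroup smoothing to be applicable. Following the approach of \cite{Cao-ZAMP-2016, JCH-2020-Nonlinearity}, I would test the $u$-equation against $-\Delta u$ (or differentiate in space and test appropriately against $|\nabla u|^{2p-2}\nabla u$), handle the boundary contribution via the 2D inequality $\partial_\nu |\nabla u|^2\le 2\kappa|\nabla u|^2$ on $\partial\Omega$, and absorb the resulting cross terms using the $L^\infty$ bounds on $u,v$ already available together with the space-time $L^2$ bounds on $v,w$. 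After an interpolation via Gagliardo--Nirenberg (crucially using $N=2$), this yields the required $L^p$ control of $\Delta u$, and a Duhamel argument on the $v$-equation then gives $\|\nabla v\|_{L^\infty}\le C$.

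Finally, with $\nabla u$, $\nabla v$ and $v$ all bounded in $L^\infty$, the alarm-taxis drift $\chi w(v\nabla u + u\nabla v)$ in the $w$-equation is paired against an $L^\infty$ velocity field while the kinetics $\mu_3 w(1-w)+vw+c_3 uw/(u+w)$ retain quadratic damping (the last term is dominated by $u$). A standard $L^p$ energy estimate followed by Moser iteration (or a Neumann semigroup argument in the spirit of \cite{Tao-Winkler-2012}) then delivers $\|w\|_{L^\infty}\le C$. Collecting these bounds contradicts the extensibility criterion, so $T_{\max}=\infty$ and the proof is complete.
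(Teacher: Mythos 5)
Your outline follows the same broad architecture as the paper (comparison principle for $u$, logistic $L^1$/space-time $L^2$ bounds, semigroup smoothing, then a second-derivative estimate feeding a Duhamel argument for $\nabla v$, and finally the bound for $w$), but it contains a genuine gap at the most delicate step, namely the passage from $\|v\|_{L^\infty}\le C$ to $\|\nabla v\|_{L^\infty}\le C$. After you control $\Delta u$ in $L^p$, you propose ``a Duhamel argument on the $v$-equation then gives $\|\nabla v\|_{L^\infty}\le C$.'' But the taxis term in the $v$-equation contributes $\nabla\cdot(v\nabla u)=v\Delta u+\nabla v\cdot\nabla u$, and after applying $\nabla e^{(t-s)d_2\Delta}$ you must place $\nabla v\cdot\nabla u$ in some $L^q$ with $q>2$ before the smoothing estimate is integrable; since $\nabla u\in L^\infty$ this requires a uniform bound on $\|\nabla v\|_{L^q}$ for some $q>2$, i.e. precisely the quantity you are trying to estimate. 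The $-b_2vw$ kinetic term likewise requires $\|w\|_{L^q}$ for some $q>2$, which you also do not yet have at this stage. This circular structure is the heart of the difficulty the paper singles out (``how to untie these tangled coupling estimates'') and it is resolved there by an intermediate bootstrap: $\|\nabla v\|_{L^2}$ and $\int_t^{t+\tau}\|\Delta v\|_{L^2}^2$ (Lemma \ref{L2-v}), $\|w\|_{L^2}$ and space-time $\|\nabla w\|_{L^2}^2$ (Lemma \ref{L2-w}), a dedicated energy estimate on $\int_\Omega|\nabla v|^4$ with the boundary-trace inequality $\partial_\nu|\nabla v|^2\le 2\sigma|\nabla v|^2$ (Lemma \ref{L4e}), and then $\|w\|_{L^3}$ (Lemma \ref{L3-w}). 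Only with $\|\nabla v\|_{L^4}$ and $\|w\|_{L^3}$ in hand does the Duhamel representation of $\nabla v$ close.

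Two smaller points. First, the boundary inequality $\partial_\nu|\nabla\cdot|^2\le 2\kappa|\nabla\cdot|^2$ that you invoke is not needed when multiplying the $u$-equation by $-\Delta u$: the Neumann condition on $u$ kills the boundary term in that calculation. In the paper it arises instead in the $\int_\Omega|\nabla v|^4$ estimate, where one genuinely cannot avoid the boundary contribution. Second, the $L^p$ control of $\Delta u$ is obtained in the paper via maximal parabolic Sobolev regularity (Lemma \ref{L4}, following \cite{Cao-ZAMP-2016,JCH-2020-Nonlinearity}) rather than by an energy method on the $u$-equation; your energy-based alternative is not wrong in principle, but it is not the route taken and would need care to produce the weighted time-integrated estimate $\int_\tau^t e^{-p(t-s)}\|\Delta u\|_{L^p}^p\le C$ that the Duhamel argument actually uses. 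Similarly, the statement that ``local-in-time $L^2$ integrability of $v$ $\ldots$ delivers $\|\nabla u(\cdot,t)\|_{L^\infty}$'' compresses a necessary two-step bootstrap ($v\in L^2$ $\Rightarrow$ $\nabla u\in L^4$ $\Rightarrow$ $v\in L^3$ $\Rightarrow$ $\nabla u\in L^\infty$), since a space-time $L^2$ bound alone is too weak for the gradient semigroup estimate in two dimensions.
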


Our next results are concerned with the asymptotical behavior of solutions to the system \eqref{pd-2}. In particular we shall explore under what conditions, the positive coexistence steady state can be asymptotically achieved.  In our analysis, we just need the positivity of parameters $\mu_i\,(i=1,2,3)$ to ensure the global boundedness of solutions and the specific values of $\mu_1,\mu_2$ and $\mu_3$ are not of importance. Hence for simplicity, we assume that $\mu_1=\mu_2=\mu_3=1$ without loss of generality for the stability analysis. Then the system \eqref{pd-2} can be rewritten as
\begin{equation}\label{pd-2-1}
\begin{cases}
u_t=d_1 \Delta u+u(1-u-b_1 v-b_3 \frac{w}{u+w}),&x\in\Omega, t>0\\
v_t=d_2\Delta v-\nabla\cdot(\xi v\nabla u)+v(1-v+u-b_2 w),&x\in\Omega, t>0\\
w_t=\Delta w-\nabla \cdot[\chi w(v\nabla u+u\nabla v)]+ w(1-w+v+c_3\frac{u}{u+w}),&x\in\Omega, t>0,\\
\frac{\partial u}{\partial \nu}=\frac{\partial v}{\partial \nu}=\frac{\partial w}{\partial \nu}=0, &x\in \partial\Omega,\, t>0,\\
(u, v, w)(x,0)=(u_0, v_0, w_0)(x),&x\in \Omega.
\end{cases}
\end{equation}
Depending on whether or not the secondary predator $w$  consumes the resource $u$, we divide our analysis into two cases:
\begin{itemize}
\item[(1)] $b_3=c_3=0$: the secondary predator $w$ does not consume the (prey) resource $u$; that is the temporal dynamics is a case of food chain.
    \item[(2)] $b_3,c_3>0$: the secondary predator $w$ consumes the resource $u$; that is the temporal dynamics is a case of intraguild predation.
\end{itemize}
We first consider the case $b_3=c_3=0$, for which one can check that \eqref{pd-2-1} has three types of homogeneous (constant) steady states as follows:
\begin{itemize}
\item[1.]Trivial steady states: $(0, 0, 0),\,\, (1, 0, 0),\,\, (0, 1, 0),\,\, (0, 0, 1);$
\item[2.] Semi-trivial steady states: $(1, 0, 1),\,\, \left(\frac{1-b_1}{1+b_1}, \frac{2}{1+b_1}, 0\right),\,\, \left(0, \frac{1-b_2}{1+b_2},
\frac{2}{1+b_2}\right);$
\item[3.] Coexistence steady state: $(u^*, v^*, w^*)$ with
\begin{eqnarray}\label{uvw*}
u^*=\frac{1+b_1b_2+b_2-b_1}{1+b_1+b_2},\,\, v^*=\frac{2-b_2}{1+b_1+b_2},\,\, w^*=\frac{3+b_1}{1+b_1+b_2},
\end{eqnarray}
where $u^*, v^*, w^*$ are all positive if
\begin{equation}\label{GS-1}
    0<b_2<2\ \ \mathrm{and} \ \ 0<b_1<1+b_1b_2+b_2.
    \end{equation}
\end{itemize}
For the positive coexistence steady state $(u^*, v^*, w^*)$ defined in \eqref{GS-1}, we have the following global stability result.
\begin{theorem}[Global stability for the case of food chain]\label{GS}
Assume the assumptions in Theorem \ref{GB} hold and let $(u, v, w)$ be the solution of  \eqref{pd-2-1} with $b_3=c_3=0$. If the parameters $b_1,b_2$ satisfy \eqref{GS-1} with
\begin{equation}\label{GS-2}
(b_1-1)^2+(b_2-1)^2<4
\end{equation}
then there exist $\xi_1>0$ and $\chi_1>0$ such that whenever $\xi \in (0,\xi_1)$ and $\chi \in (0, \chi_1)$ it holds that
    \begin{equation*}\label{ec-1}
    \|u(\cdot,t)-u^*\|_{L^\infty}+\|v(\cdot,t)-v^*\|_{L^\infty}+\|w(\cdot,t)-w^*\|_{L^\infty}\leq C_1 e^{-\sigma_1 t}, \ \mathrm{for \ all}\ t>t_0,
    \end{equation*}
 with some $t_0>0$, where $C_1$ and $\sigma_1$ are positive constants independent of $t$.
\end{theorem}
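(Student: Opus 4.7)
The plan is to analyze a logarithmic Lyapunov functional adapted to the coexistence state, and to reduce the argument to controlling one reaction quadratic form together with two taxis cross terms. Setting
\begin{equation*}
E(t)=\int_\Omega\bigl[H(u,u^*)+H(v,v^*)+H(w,w^*)\bigr]\,dx,\qquad H(s,s^*):=s-s^*-s^*\ln\frac{s}{s^*}\ge 0,
\end{equation*}
a direct computation using \eqref{pd-2-1} with $b_3=c_3=0$, the three steady-state identities $1-u^*-b_1v^*=0$, $1-v^*+u^*-b_2w^*=0$, $1-w^*+v^*=0$, and integration by parts under the Neumann boundary condition yields
\begin{equation*}
\frac{dE}{dt}=-D(t)+T(t)-R(t),
\end{equation*}
with
\begin{align*}
D(t)&=d_1u^*\int_\Omega\frac{|\nabla u|^2}{u^2}\,dx+d_2v^*\int_\Omega\frac{|\nabla v|^2}{v^2}\,dx+w^*\int_\Omega\frac{|\nabla w|^2}{w^2}\,dx,\\
T(t)&=\xi v^*\int_\Omega\frac{\nabla u\cdot\nabla v}{v}\,dx+\chi w^*\int_\Omega\frac{(v\nabla u+u\nabla v)\cdot\nabla w}{w}\,dx,\\
R(t)&=\int_\Omega\bigl[(u-u^*)^2+(v-v^*)^2+(w-w^*)^2\bigr]\,dx\\
&\quad+(b_1-1)\int_\Omega(u-u^*)(v-v^*)\,dx+(b_2-1)\int_\Omega(v-v^*)(w-w^*)\,dx.
\end{align*}

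This makes the role of condition \eqref{GS-2} transparent: $R(t)$ is the spatial integral of the quadratic form $\mathbf{X}^{T}M\mathbf{X}$ in $\mathbf{X}=(u-u^*,v-v^*,w-w^*)^{T}$, where
\begin{equation*}
M=\begin{pmatrix}1&\tfrac{b_1-1}{2}&0\\[2pt]\tfrac{b_1-1}{2}&1&\tfrac{b_2-1}{2}\\[2pt]0&\tfrac{b_2-1}{2}&1\end{pmatrix},\qquad \det M=1-\tfrac14\bigl[(b_1-1)^2+(b_2-1)^2\bigr].
\end{equation*}
An elementary check of leading principal minors shows that $M$ is positive definite precisely when \eqref{GS-2} holds, so that $R(t)\ge\eta_0\,R_0(t)$ for some $\eta_0>0$ depending only on $b_1,b_2$, with
\begin{equation*}
R_0(t):=\int_\Omega\bigl[(u-u^*)^2+(v-v^*)^2+(w-w^*)^2\bigr]\,dx.
\end{equation*}

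Next, I would absorb the taxis contribution $T(t)$ into a fraction of $D(t)$. Using the $L^\infty$ bounds from Theorem \ref{GB}, write $|\nabla u|\le\|u\|_{L^\infty}\,|\nabla u|/u$ and similarly for $v$, so that each scalar product appearing in $T(t)$ factors as one of $(|\nabla u|/u)(|\nabla v|/v)$, $(|\nabla u|/u)(|\nabla w|/w)$, or $(|\nabla v|/v)(|\nabla w|/w)$ up to constants involving $\|u\|_\infty$ and $\|v\|_\infty$. Three applications of Young's inequality with independent parameters then route these products into the three dissipative integrals of $D(t)$; the coefficient conditions reduce to a finite set of smallness inequalities in $\xi$ and $\chi$, and choosing $\xi<\xi_1$ and $\chi<\chi_1$ for appropriate thresholds (depending on $d_1,d_2,u^*,v^*,w^*$ and the bounds from Theorem \ref{GB}) guarantees $|T(t)|\le\tfrac12 D(t)$. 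Combined with the reaction bound, this gives
\begin{equation*}
\frac{dE}{dt}\le -\tfrac12 D(t)-\eta_0 R_0(t).
\end{equation*}

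Integrating in time yields $R_0\in L^1(0,\infty)$. Because Theorem \ref{GB} provides uniform $W^{1,\infty}$ bounds on $(u,v,w)$, standard parabolic Schauder theory makes $(u_t,v_t,w_t)$ uniformly bounded, so $R_0$ is uniformly continuous on $[0,\infty)$ and Barbalat's lemma forces $R_0(t)\to 0$. The two-dimensional Gagliardo--Nirenberg interpolation $\|\varphi\|_{L^\infty}\le C\|\varphi\|_{L^2}^{1/2}\|\varphi\|_{W^{1,\infty}}^{1/2}$ then upgrades this to $\|u-u^*\|_{L^\infty}+\|v-v^*\|_{L^\infty}+\|w-w^*\|_{L^\infty}\to 0$, so there is $t_0>0$ with $u\ge u^*/2$, $v\ge v^*/2$, $w\ge w^*/2$ for all $t\ge t_0$. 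On this regime $H(s,s^*)$ is equivalent to $(s-s^*)^2$, so $E(t)\le CR_0(t)$ and consequently $dE/dt\le -\eta_0 R_0(t)\le -\eta_1 E(t)$, giving $E(t)\le E(t_0)e^{-\eta_1(t-t_0)}$; a further interpolation translates the resulting $L^2$-decay into the stated $L^\infty$-decay with some rate $\sigma_1>0$. The main obstacle will be the taxis estimate: the triple coupling in the $\chi$-term forces a simultaneous choice of three Young parameters balancing all three dissipative integrals, and the condition \eqref{GS-2} is intrinsic to the symmetric weights used in $E(t)$ and cannot be relaxed within this approach.
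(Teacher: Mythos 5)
Your proposal is correct and takes essentially the same route as the paper: the only structural difference is that the paper bundles the diffusion and taxis contributions into a single quadratic form $-\int_\Omega Y B_1 Y^T$ with $Y=(\nabla u/u,\nabla v/v,\nabla w/w)$ and verifies positive semi-definiteness of $B_1$ by checking its leading principal minors (using that $\|u\|_{L^\infty},\|v\|_{L^\infty}$ are independent of $\xi,\chi$), which is an equivalent packaging of your Young-inequality absorption $|T(t)|\le\tfrac12 D(t)$. The reaction-matrix check under \eqref{GS-2}, the Barb\u{a}lat/Gagliardo--Nirenberg convergence step, and the exponential-rate upgrade via the pointwise equivalence $H(s,s^*)\sim(s-s^*)^2$ once $s$ is near $s^*$ all match the paper's argument.
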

\begin{figure}[h]
\centering
\includegraphics[width=10cm]{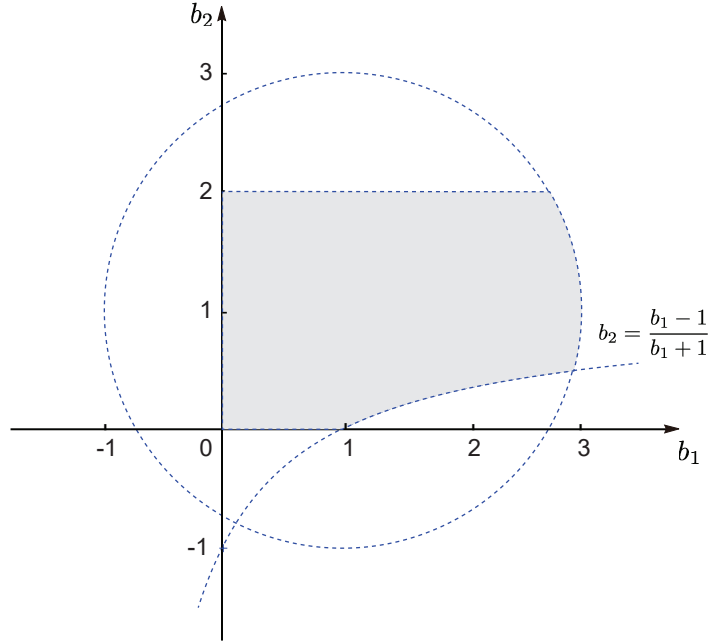}
\caption{Illustration of the admissible regime (shade region) for parameters $b_1, b_2>0$ satisfying \eqref{GS-1}-\eqref{GS-2}. }
\label{fig1}
\end{figure}
\begin{remark}
\em{
We underline that the admissible regime for the parameters $b_1, b_2>0$ satisfying \eqref{GS-1}-\eqref{GS-2} is nonempty and can be explicitly identified, see Figure \ref{fig1}.
}
\end{remark}
Next we explore the global stability of solutions in the case  $b_3, c_3>0$. For simplicity, we further assume that $c_3=1$ without loss of generality. Then
there are also three types  of homogenous steady states as follows:
\noindent
\begin{itemize}
\item[1.] Trivial steady states: $(0, 0, 1)$ and $(1, 0, 0);$

\item[2.] Semi-trivial steady states:

$\Big(0, \frac{1-b_2}{1+b_2}, \frac{2}{1+b_2}\Big),\,
\Big(\frac{1-b_1}{1+b_1}, \frac{2}{1+b_1}, 0 \Big), \ \Big(1-2b_3+\frac{2b_3^2+b_3\sqrt{2(1-b_3)}}{1+b_3}, 0, \frac{2b_3+\sqrt{2(1-b_3)}}{1+b_3}\Big);$

\item[3.] Coexistence steady state: $(u_*,v_*,w_*)$, where $u_*,v_*,w_*$ satisfy the following equations
    \begin{equation}\label{ce-1}
    \begin{cases}
    1-u_*-b_1 v_*-b_3 \frac{w_*}{u_*+w_*}=0,\\
    1-v_*+u_*-b_2 w_*=0,\\
    1-w_*+v_*+\frac{u_*}{u_*+w_*}=0.\\
    \end{cases}
    \end{equation}
 \end{itemize}
 We note that $(u_*, v_*, w_*)$ can be explicitly solved and furthermore if $b_1$ and $b_3$ are sufficiently small,  \eqref{ce-1} has a unique positive solution $(u_*,v_*,w_*)$ (see the Appendix for details), for which we have the following global stability result.

\begin{theorem}[Global stability for the case of intraguild predation]\label{GS1}
Let the assumptions in Theorem \ref{GB} hold and  $(u, v, w)$ be the solution of \eqref{pd-2-1} with $b_3>0$ and $c_3=1$. Assume that $(u_*,v_*,w_*)$ is the positive coexistence steady state satisfying \eqref{ce-1}. If
\begin{equation}\label{condition3-s1}
b_1 \mbox{ and } b_3 \mbox{ are sufficiently small, } \frac{1}{10}\leq b_2<\sqrt 2,
\end{equation}
then there exist $\xi_2>0$ and $\chi_2>0$ such that whenever $\xi \in (0,\xi_2)$ and $\chi \in (0, \chi_2)$ it holds that
\begin{equation*}\label{ce1-1}
    \|u(\cdot,t)-u_*\|_{L^\infty}+\|v(\cdot,t)-v_*\|_{L^\infty}+\|w(\cdot,t)-w_*\|_{L^\infty}\leq C_2 e^{-\sigma_2 t} \ \mathrm{for \ all}\ t>T_0,
    \end{equation*}
with some $T_0>0$,  where $C_2$ and $\sigma_2$ are positive constants independent of $t$.
 \end{theorem}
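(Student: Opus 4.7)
The plan is to adapt the Lyapunov functional strategy that would be used for Theorem~\ref{GS} (food chain case) to the more delicate intraguild setting, where the ratio-dependent term $\tfrac{uw}{u+w}$ couples $u$ and $w$ directly in the kinetics. I would work with the weighted relative entropy
\[
\mathcal{E}(t)=\int_\Omega\!\left\{A_1\!\Big(u-u_*-u_*\ln\tfrac{u}{u_*}\Big)+A_2\!\Big(v-v_*-v_*\ln\tfrac{v}{v_*}\Big)+A_3\!\Big(w-w_*-w_*\ln\tfrac{w}{w_*}\Big)\right\}dx,
\]
where the positive weights $A_1,A_2,A_3$ will be tuned at the end. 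Theorem~\ref{GB} provides uniform $W^{1,\infty}$ bounds for $u,v$ and $L^\infty$ bounds for $w$, and strong positivity from below for $u,v,w$ follows from a pointwise comparison with ODEs having uniformly bounded coefficients, so $\mathcal{E}$ is well-defined, nonnegative and equivalent to $\|u-u_*\|_{L^2}^2+\|v-v_*\|_{L^2}^2+\|w-w_*\|_{L^2}^2$ on a neighborhood of the steady state.

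Differentiating $\mathcal{E}$ along \eqref{pd-2-1} produces three types of contributions. The diffusive part yields
\[
-A_1 d_1 u_*\!\int_\Omega\!\frac{|\nabla u|^2}{u^2}dx-A_2 d_2 v_*\!\int_\Omega\!\frac{|\nabla v|^2}{v^2}dx-A_3 w_*\!\int_\Omega\!\frac{|\nabla w|^2}{w^2}dx,
\]
which is strictly dissipative. The taxis part, after integration by parts, generates cross integrals bounded in absolute value by $C\xi\!\int|\nabla u||\nabla v|+C\chi\!\int(|\nabla u|+|\nabla v|)|\nabla w|$, using the $L^\infty$ bounds from Theorem~\ref{GB}; by Young's inequality these are absorbed into the diffusion integrals once $\xi,\chi$ are small, explaining the thresholds $\xi_2,\chi_2$ in the statement. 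The reaction part, after using the steady state identities \eqref{ce-1} together with
\[
\tfrac{w}{u+w}-\tfrac{w_*}{u_*+w_*}=\tfrac{u_*(w-w_*)-w_*(u-u_*)}{(u+w)(u_*+w_*)},\qquad \tfrac{u}{u+w}-\tfrac{u_*}{u_*+w_*}=\tfrac{w_*(u-u_*)-u_*(w-w_*)}{(u+w)(u_*+w_*)},
\]
collapses into a quadratic form $-(U,V,W)\,M\,(U,V,W)^T$ in the deviations $U=u-u_*$, $V=v-v_*$, $W=w-w_*$, whose coefficients depend on the $A_i$ and the reaction parameters.

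The crux of the argument is to choose $A_1,A_2,A_3>0$ so that $M$ is positive definite. With $b_3=0$ this matrix is the Lotka--Volterra matrix underlying Theorem~\ref{GS}, and the symmetrization condition corresponding to \eqref{GS-2} selects admissible weights. When $b_3>0$ the ratio-dependent terms contribute additional entries: a diagonal entry of order $b_3 w_*/((u+w)(u_*+w_*))$ that is \emph{destabilizing} in the $U^2$ slot, plus off-diagonal couplings mixing $U$ and $W$. The restriction $b_2<\sqrt 2$ encodes the positivity of the leading $(v,w)$ sub-determinant of $M$, while the lower bound $b_2\geq 1/10$ prevents the $A_3$-weighted $v$--$w$ coupling from degenerating when the weights are rebalanced, and the smallness of $b_1,b_3$ ensures the perturbations of the diagonal entries and of the cross terms are dominated by the intrinsic diagonal dissipation inherited from $-u^2,-v^2,-w^2$. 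Verifying this positive-definiteness by hand—locating the precise admissible triple $(A_1,A_2,A_3)$ and tracking how each entry of $M$ depends on $b_1,b_2,b_3$—is the step I expect to be the main obstacle. Once it is achieved, one obtains
\[
\frac{d\mathcal{E}}{dt}\leq -\delta\!\int_\Omega\bigl(U^2+V^2+W^2\bigr)dx
\]
for some $\delta>0$, provided $\xi\in(0,\xi_2)$ and $\chi\in(0,\chi_2)$.

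Integrating this inequality in time gives $(U,V,W)\in L^2(0,\infty;L^2(\Omega))$, and combined with the uniform $W^{1,\infty}$ bounds on $u,v$ and $L^\infty$ bound on $w$ from Theorem~\ref{GB}, standard parabolic Hölder regularity produces equicontinuity, hence $\|U(\cdot,t)\|_{L^\infty}+\|V(\cdot,t)\|_{L^\infty}+\|W(\cdot,t)\|_{L^\infty}\to 0$ as $t\to\infty$. Choose $T_0$ so large that for $t\geq T_0$ the solution lies in the neighborhood of $(u_*,v_*,w_*)$ on which $\mathcal{E}$ is $L^2$-equivalent. The dissipation inequality then reads $\mathcal{E}'(t)\leq-c\,\mathcal{E}(t)$, which yields exponential $L^2$ decay; a final bootstrap using $L^p$-$L^q$ semigroup estimates for the linearized perturbation system (the same Neumann semigroup machinery used in Theorem~\ref{GB}) transfers the exponential rate to $L^\infty$, producing the stated bound with suitable constants $C_2,\sigma_2>0$.
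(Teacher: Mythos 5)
Your framework is the right one (relative-entropy Lyapunov functional, absorbing the taxis terms via smallness of $\xi,\chi$, Barb\u{a}lat plus parabolic regularity to upgrade $L^2$ decay to $L^\infty$), but the proposal stalls exactly at the step you yourself flag as the obstacle, and the obstruction is real: you are missing the single ingredient that makes the positive-definiteness of the reaction matrix checkable. The ratio-dependent terms deposit factors $\tfrac{1}{(u+w)(u_*+w_*)}$ (and $\tfrac{1}{(u+w)^2(u_*+w_*)^2}$ after taking a determinant) into the off-diagonal $(1,3)$ entry and both diagonal corners of the matrix $M$. These are \emph{pointwise} quantities, not constants, and you cannot bound them from above simply by knowing $u,v,w$ are positive and bounded; you need a quantitative lower bound on $u$, and a lower bound that is \emph{close to $1$}, not merely bounded away from $0$. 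The paper supplies this with a comparison-principle lemma (Lemma~\ref{le4-0} in the source): for $b_1,b_3$ small, $\liminf_{t\to\infty}u(x,t)\ge 1-b_3-b_1\|v\|_{L^\infty}$, hence eventually $u+w\ge \sqrt{2}/2$. Plugging this into the $4|M|$ computation for the unweighted ($A_1=A_2=A_3=1$) functional at $b_1=b_3=0$ gives $3-(b_2-1)^2-\tfrac{w_*^2}{(u_*+w_*)^2(u+w)^2}+\tfrac{3u_*+(b_2-1)w_*}{(u_*+w_*)(u+w)}$; the bound $u+w\ge\sqrt2/2$ and $b_2<\sqrt2$ force the first three terms to sum to at least $1-(b_2-1)^2>0$, while $b_2\ge 1/10$ guarantees $3u_*+(b_2-1)w_*>0$. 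Your reading of the hypotheses is therefore off: $b_2<\sqrt2$ is not just a $(v,w)$ minor condition, and $b_2\ge1/10$ has nothing to do with "rebalancing weights" — the paper never introduces the weights $A_1,A_2,A_3$, and the condition enters through the sign of the term $3u_*+(b_2-1)w_*$ after inserting the explicit $w_*$ at $b_1=b_3=0$. Continuity of the steady state in $(b_1,b_3)$ then carries positive-definiteness to small $b_1,b_3$. Without the $\liminf u\ge 1-O(b_1,b_3)$ estimate your argument cannot close, because the entries of $M$ involving $\tfrac{1}{u+w}$ are otherwise uncontrolled.

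A secondary remark: introducing free weights $A_1,A_2,A_3$ is extra generality the paper does not need (unit weights suffice once $u$ is bounded below near $1$), and it actually complicates the already delicate determinant computation; if you pursue that route you must still import the lower-bound lemma for $u$ to make the $\tfrac{1}{u+w}$ factors harmless.
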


 The rest of this paper is arranged as follows. In Section 2, we  show the local existence of solutions and prove some basic properties of solutions. In Section 3, we demonstrate the details of obtaining the necessary {\it a priori} estimates of solutions and  prove Theorem \ref{GB}. Then in Section 4,  we prove the global stability of coexistence steady states under certain conditions stated in Theorem \ref{GS} and  Theorem \ref{GS1} by employing the Lyapunov functional method alongside the Barb\v{a}lat's Lemma. Section 5 is an appendix showing the existence of positive coexistence steady state under conditions imposed in Theorem \ref{GS1}.

\section{Local existence and Preliminaries}
In the sequel, we shall use  $C_i\, (i=1,2, \cdots)$ to denote a generic positive constant which may vary in the context. Without confusion, the integration variables $x$ and $t$ will be omitted, for instance $\int_0^t \int_{\Omega} f(x,s)dxds$ will be abbreviated as $\int_0^t \int_{\Omega} f(x,s)$.
The existence and uniqueness of local solutions of $\eqref{pd-2}$, which can be readily proved by  Amann's theorem \cite{Amann2, A-Book-1993}.

\begin{lemma}[Local existence]\label{LS}
	Let the assumptions in Theorem \ref{GB} hold.
Then there exists $T_{max}\in (0,\infty]$ such that the problem $\eqref{pd-2}$ has a unique classical solution $$(u,v,w) \in [C^0(\bar{\Omega}\times[0,T_{max}))\cap C^{2,1}(\bar{\Omega}\times(0,T_{max}))]^3$$ satisfying $u, v,w>0 $ for all $t>0$. Moreover,
	\begin{equation}\label{a-priori}
	\text{if}\ \ T_{max}<\infty,\ \text{then}\  \lim\limits_{t\nearrow T_{max}}(\|u(\cdot,t)\|_{W^{1,\infty}}+\|v(\cdot,t)\|_{W^{1,\infty}}
	+\|w(\cdot,t)\|_{L^\infty})=\infty .
	\end{equation}
\end{lemma}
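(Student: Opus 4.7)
The plan is to recast \eqref{pd-2} as a single quasilinear parabolic system for the vector $U=(u,v,w)^{T}$ in divergence form
\[
U_t = \nabla\cdot\bigl(A(U)\nabla U\bigr) + F(U), \qquad \tfrac{\partial U}{\partial\nu}\Big|_{\partial\Omega}=0,
\]
with lower-triangular diffusion matrix
\[
A(U) = \begin{pmatrix} d_1 & 0 & 0 \\ -\xi v & d_2 & 0 \\ -\chi vw & -\chi uw & 1 \end{pmatrix}
\]
and $F(U)$ collecting the reaction terms. The ratio-dependent expressions $b_3 uw/(u+w)$ and $c_3 uw/(u+w)$ are extended continuously by zero at the origin and are locally Lipschitz on $\{(u,w):u,w\geq 0,\ u+w>0\}$. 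Since $A(U)$ is lower triangular with constant positive diagonal entries $\{d_1,d_2,1\}$, its spectrum is independent of $U$ and strictly positive, so the system is normally elliptic in Amann's sense; the homogeneous Neumann conditions form an admissible boundary operator.

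With these structural properties in place, the existence and uniqueness of a classical solution on a maximal interval $[0,T_{max})$ for data in $[W^{2,\infty}(\Omega)]^3$ follow directly from Amann's local theory \cite{Amann2,A-Book-1993}, which also supplies the regularity $[C^0(\bar\Omega\times[0,T_{max}))\cap C^{2,1}(\bar\Omega\times(0,T_{max}))]^3$. The same framework furnishes an extensibility dichotomy in a Sobolev norm, which I would sharpen to \eqref{a-priori} by a standard bootstrap: once $\|u\|_{W^{1,\infty}}+\|v\|_{W^{1,\infty}}+\|w\|_{L^\infty}$ stays finite on $[0,T_{max})$, the convective and reactive terms become bounded, and Neumann heat semigroup smoothing together with parabolic Schauder estimates promote the solution to a classical one up to the closed endpoint, contradicting maximality unless $T_{max}=\infty$.

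For positivity I would argue equation by equation via the parabolic strong maximum principle. Expanding the divergence in the $v$-equation, for example, yields
\[
v_t - d_2\Delta v + \xi\nabla u\cdot\nabla v = v\bigl(\mu_2(1-v) - \xi\Delta u + u - b_2 w\bigr),
\]
whose coefficient of $v$ is bounded on every compact subinterval of $(0,T_{max})$ thanks to the regularity already secured, so nonnegative nontrivial $v_0$ together with Neumann data forces $v>0$ on $\bar\Omega\times(0,T_{max})$. The same treatment applies to $u$ and to $w$, using that $uw/(u+w)\geq 0$ and the Lipschitz extension at the origin keeps the reaction well defined and bounded as soon as the strict positivity of one of $u,w$ has been established on any small time interval $(0,\tau]$.

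The principal technical point, although concealed by Amann's black-box statement, is the verification that the full alarm-taxis cross-diffusion matrix inherits the triangular pattern displayed above: the flux $-\chi w(v\nabla u+u\nabla v)$ enters only the third row and only through gradients of the first two unknowns. This triangularity is what reduces the spectral condition to reading off the positive diagonal; without it one would be forced to analyze a genuinely coupled cross-diffusion matrix with no a priori sign. Once this observation is in hand and the ratio-dependent source is checked to be locally Lipschitz in the admissible regime, the remaining steps are a routine application of the quasilinear parabolic theory, so I do not anticipate further obstacles beyond what is already encapsulated in \cite{Amann2,A-Book-1993}.
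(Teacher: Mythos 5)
Your proposal is correct and follows essentially the same route as the paper: recast the system in divergence form with the lower-triangular diffusion matrix, invoke Amann's quasilinear theory for local existence and regularity, and obtain positivity componentwise via the parabolic strong maximum principle (after expanding the taxis divergences so the first-order coefficients are controlled). The one place you take a slight detour is the extensibility criterion \eqref{a-priori}: the paper does not run a bootstrap from a weaker Amann dichotomy, but instead observes that for lower-triangular systems \cite[Theorem 5.2]{Amann3} already delivers precisely the norm $\|u\|_{W^{1,\infty}}+\|v\|_{W^{1,\infty}}+\|w\|_{L^\infty}$ (only $L^\infty$-control on the last component) as the blow-up quantity, so the sharpening you sketch is obtained for free by citing the appropriate theorem rather than by semigroup-plus-Schauder bootstrapping; your explicit note that $uw/(u+w)$ extends Lipschitz-continuously to the origin is a helpful detail that the paper leaves implicit.
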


\begin{proof}
	Denote $z=(u, v, w)$. Then the system \eqref{pd-2} can be written as
	 \begin{eqnarray}\label{LE-1}
\begin{cases}
z_{t}=\nabla \cdot (P(z) \nabla z)+Q(z), &x\in \Omega, ~~t>0,\\
\frac{\partial z}{\partial \nu}=0, &x\in \partial\Omega, ~~t>0,\\
z(\cdot, 0)=(u_0, v_0, w_0), & x\in \Omega,
\end{cases}
\end{eqnarray}
where
\begin{gather*}
P(z)=\left(\begin{matrix}
d_1 &0 &0\\
-\xi v& d_2 & 0\\
-\chi vw &-\chi u w& 1
\end{matrix}\right),\quad
Q(z)=\left(\begin{matrix}
u(\mu_1-\mu_1u-b_1v- \frac{b_3 w}{u+w})\\v(\mu_2-\mu_2v+u-b_2 w)\\w(\mu_3-\mu_3w+ v+ \frac{c_3u}{u+w})
\end{matrix}\right).
\end{gather*}
The matrix $P(z)$ is positive definite for the given initial data, which means the system \eqref{LE-1} is uniformly parabolic.
Then the application of \cite[Theorem 7.3]{Amann2} yields a $T_{max}>0$
such that the system \eqref{LE-1} possesses a unique solution $(u,v,w) \in [C^0(\bar \Omega \times [0, T_{max}))\cap C^{2,1}(\bar \Omega \times (0, T_{max}))]^3$.

Next, we prove the positivity of $u, v$ and $w.$ Applying the strong maximum principle to the first equation in \eqref{pd-2}, we have
$u(x, t) > 0$ for all $(x, t) \in \Omega \times (0, T_{max}),$   due to the fact $u_0 \gneqq 0 $. Moreover, we can rewrite the equation of $v$ as follows
\begin{equation}\label{lc-1}
  \begin{cases}
 v_{t}-d_2\Delta v+\xi\nabla u\cdot\nabla v+\Psi(x,t)=0,&x\in\Omega,t\in(0,T_{\max}),\\
 \frac{\partial v}{\partial\nu}=0,&x\in\partial\Omega,t\in(0,T_{\max}),\\
 v(x,0)=v_{0}(x)\geq0,& x\in\Omega,
 \end{cases}
\end{equation}
where $\Psi(x, t)=\xi v \Delta u -v(\mu_2-\mu_2 v+u-b_2 w).$ Then  the strong maximum principle applied to  \eqref{lc-1} yields
$v(x, t) > 0$ for all $(x, t) \in \Omega \times (0, T_{max}).$
Similarly, we can derive that $w>0$ for all $(x, t) \in \Omega \times (0, T_{max}).$ In addition, since $P(z)$ is a lower triangular matrix, the blow-up criterion  \eqref{a-priori} follows from \cite [Theorem 5.2]{Amann3} directly. Then the proof of Lemma \ref{LS} is completed.
	\end{proof}
\begin{lemma}\label{UW} Let the assumptions in Theorem \ref{GB} hold. Then
	the solution of \eqref{pd-2} satisfies
	\begin{equation}\label{Loow}
	\|u(\cdot,t)\|_{L^\infty}\leq K
	\end{equation}
for all $t>0,$ where $K:=\max\{1, \|u_0\|_{L^\infty}\},$  and
\begin{eqnarray}\label{eq2-0}
	\mathop{\lim \sup}\limits_{t\to\infty} u(\cdot, t) \leq 1 \mbox { for all } x\in \bar \Omega.
\end{eqnarray}
\end{lemma}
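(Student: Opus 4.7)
The plan is to exploit the fact that the two coupling sink terms in the $u$-equation are nonnegative, so that $u$ is a subsolution of a scalar logistic problem to which standard ODE comparison applies. Writing the first equation of \eqref{pd-2} as
\begin{equation*}
u_t - d_1 \Delta u = \mu_1 u(1-u) - b_1 uv - b_3\frac{uw}{u+w},
\end{equation*}
I would first note that by Lemma \ref{LS} we have $u,v,w>0$ in $\Omega\times(0,T_{\max})$, so $\frac{uw}{u+w}$ is well defined and nonnegative and, together with $b_1 uv\geq 0$, this yields the pointwise differential inequality
\begin{equation*}
u_t - d_1 \Delta u \leq \mu_1 u(1-u), \qquad \frac{\partial u}{\partial\nu}\Big|_{\partial\Omega}=0.
\end{equation*}

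Next, I would compare $u$ against the spatially constant supersolution $U(t)$ solving the scalar logistic ODE
\begin{equation*}
U'(t)=\mu_1 U(t)(1-U(t)),\qquad U(0)=\|u_0\|_{L^\infty}.
\end{equation*}
This ODE can be solved explicitly and its solution is monotone: if $U(0)\leq 1$ then $U(t)\uparrow 1$, and if $U(0)\geq 1$ then $U(t)\downarrow 1$; in either case $U(t)\leq K=\max\{1,\|u_0\|_{L^\infty}\}$ for all $t\geq 0$ and $U(t)\to 1$ as $t\to\infty$. Since $U$ satisfies $U_t - d_1\Delta U \geq \mu_1 U(1-U)$ with zero Neumann data and $U(0)\geq u_0$ on $\bar\Omega$, the parabolic comparison principle (applied to the quasimonotone scalar inequality above, after a standard linearization of the logistic term on the bounded interval $[0,K]$) gives $u(x,t)\leq U(t)$ on $\bar\Omega\times[0,T_{\max})$.

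From this one immediately reads off $\|u(\cdot,t)\|_{L^\infty}\leq K$, which in particular shows that the $W^{1,\infty}$-blow-up criterion \eqref{a-priori} need only detect $\nabla u$, $v$, $\nabla v$ and $w$. The asymptotic bound $\limsup_{t\to\infty} u(\cdot,t)\leq 1$ uniformly in $x$ then follows directly from $u(x,t)\leq U(t)$ and $U(t)\to 1$. I do not anticipate a real obstacle here; the only mildly delicate point is ensuring the comparison principle is valid despite the presence of the ratio term $\frac{uw}{u+w}$, but since we have simply discarded this nonnegative sink in passing to the logistic inequality, no monotonicity assumption on the full reaction vector is needed and the argument reduces to the scalar maximum principle. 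Finally, both claims hold independently of the taxis coefficients $\xi$ and $\chi$, consistent with the $\xi,\chi$-independence of $\|u\|_{L^\infty}$ asserted in Theorem \ref{GB}.
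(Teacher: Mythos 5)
Your proof is correct and follows essentially the same route the paper takes (and the reference it cites, \cite[Lemma 2.2]{JW-JDE-2017}): discard the nonnegative predation sinks, compare with the spatially homogeneous solution of the logistic ODE, and read off both the uniform bound $K=\max\{1,\|u_0\|_{L^\infty}\}$ and the asymptotic bound $1$ from the monotone convergence of that ODE solution. The one technical point you flag---validity of the comparison principle---is handled exactly as you say, by working on compact time intervals where the classical solution is automatically bounded so that the logistic nonlinearity is Lipschitz there.
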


\begin{proof} The proof is the same as the one {\color{black} in} \cite[Lemma 2.2]{JW-JDE-2017} based on a comparison principle applied to the first equation of \eqref{pd-2} and we omit the details for brevity.
\end{proof}

\begin{lemma}\label{Le2.3}
Suppose the assumptions in Theorem \ref{GB} hold. Then the solution of \eqref{pd-2} satisfies
\begin{equation}\label{L1v}
	\|v(\cdot,t)\|_{L^1} \leq K_1\ \ \mathrm{for\ all}\ \ t\in(0,T_{max}),
	\end{equation}
and
		\begin{equation}\label{L1-2}
\int_t^{t+\tau}\int_\Omega v^2\leq \frac{4 K_1}{\mu_2}
\ \ \mathrm{for\ all}\ \ t\in(0,\widetilde{T}_{max}),
		\end{equation}
		where $K_1= \|v_0\|_{L^1}+ \frac{(\mu_2+K+1)^2|\Omega|}{2\mu_2}$,
 $\tau$ is a constant such that
		\begin{equation}\label{DTT}
		0<\tau<\min \big\{ 1,T_{max}\big\} \ \ \mathrm{and}\ \
		\widetilde{T}_{max}:=\begin{cases}
		T_{max}-\tau,&\mathrm{if}\ \ T_{max}<\infty,\\
		\infty, &\mathrm{if}\ \ T_{max}=\infty.\\
		\end{cases}
		\end{equation}
	\end{lemma}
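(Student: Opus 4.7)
\medskip

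\noindent\textbf{Proof proposal.} The plan is to integrate the second equation of \eqref{pd-2} over $\Omega$ and extract an absorbing differential inequality for $\|v(\cdot,t)\|_{L^1}$, from which both the pointwise $L^1$ bound and the local spacetime $L^2$ bound follow by standard manipulations.

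First, I would test the $v$-equation with the constant $1$. The Neumann boundary condition $\partial_\nu v = 0$ together with $\partial_\nu u = 0$ makes the taxis flux $-\nabla\cdot(\xi v\nabla u)$ integrate to zero, leaving
\begin{equation*}
\frac{d}{dt}\int_\Omega v = \mu_2\int_\Omega v - \mu_2\int_\Omega v^2 + \int_\Omega uv - b_2\int_\Omega vw.
\end{equation*}
Next I would exploit the two sign/size features already available: by Lemma \ref{UW} we have $u\le K$, and by Lemma \ref{LS} we have $v,w>0$ so that $-b_2\int_\Omega vw\le 0$. This yields
\begin{equation*}
\frac{d}{dt}\int_\Omega v \le (\mu_2+K)\int_\Omega v - \mu_2\int_\Omega v^2.
\end{equation*}

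To close the estimate at the $L^1$ level, I would add $\int_\Omega v$ to both sides and bound the resulting linear-in-$v$ term by the quadratic dissipation via the elementary pointwise Young inequality $(\mu_2+K+1)v \le \mu_2 v^2 + \tfrac{(\mu_2+K+1)^2}{4\mu_2}$, yielding an ODI of the form $y'(t)+y(t)\le C|\Omega|$ for $y(t):=\int_\Omega v$. A direct application of Gronwall's inequality then gives the uniform bound \eqref{L1v} with $K_1$ of the stated form (the factor $\tfrac{1}{2\mu_2}$ vs.\ $\tfrac{1}{4\mu_2}$ being a harmless enlargement of the constant).

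For the spacetime estimate \eqref{L1-2}, I would simply integrate the displayed differential inequality (without adding $\int_\Omega v$) over the time window $[t,t+\tau]$:
\begin{equation*}
\mu_2\int_t^{t+\tau}\!\!\int_\Omega v^2 \le \int_\Omega v(\cdot,t) + (\mu_2+K)\int_t^{t+\tau}\!\!\int_\Omega v,
\end{equation*}
and then insert the already-proved $L^1$ bound together with $\tau\le 1$ to arrive at \eqref{L1-2}. No real obstacle arises: the only delicate point is noting that both the chemotactic flux and the $b_2 vw$ absorption term have favorable signs when tested against $1$, so that the quadratic kinetic decay $-\mu_2 v^2$ is available to dominate the linear forcing $(\mu_2+K)v$. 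All remaining work is bookkeeping of constants.
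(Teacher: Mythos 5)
Your strategy for \eqref{L1v} is exactly the paper's: integrate the $v$-equation, kill the taxis flux by the Neumann boundary conditions, use $u\le K$ and the positivity of $v,w$ to drop $-b_2\int_\Omega vw$, absorb the linear term by the quadratic dissipation via Young's inequality, and invoke Gr\"onwall. That part is fine (your smaller $\tfrac{1}{4\mu_2}$ constant is a harmless improvement over the stated $K_1$).

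The last step for \eqref{L1-2}, however, does not reach the stated constant $\tfrac{4K_1}{\mu_2}$. From your integrated inequality
\begin{equation*}
\mu_2\int_t^{t+\tau}\!\!\int_\Omega v^2 \;\le\; \int_\Omega v(\cdot,t) + (\mu_2+K)\int_t^{t+\tau}\!\!\int_\Omega v,
\end{equation*}
inserting $\int_\Omega v\le K_1$ and $\tau\le 1$ only gives $\int_t^{t+\tau}\int_\Omega v^2\le \tfrac{(1+\mu_2+K)K_1}{\mu_2}$, and $1+\mu_2+K$ readily exceeds $4$ since $K=\max\{1,\|u_0\|_{L^\infty}\}$ and $\mu_2>0$ are arbitrary. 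The source of the mismatch is structural: you spend the \emph{entire} dissipation $\mu_2 v^2$ in the Young step used for the $L^1$ bound, so no quadratic term survives on the left, and you then revert to the raw inequality whose right-hand side still carries the parameter-dependent factor $(\mu_2+K)$. The paper instead uses only \emph{half} of the dissipation in Young's inequality, keeping the single absorbed ODI $\tfrac{d}{dt}\int_\Omega v+\int_\Omega v+\tfrac{\mu_2}{2}\int_\Omega v^2\le\tfrac{(\mu_2+K+1)^2|\Omega|}{2\mu_2}$, and integrates that in time; the surviving $\tfrac{\mu_2}{2}\int_\Omega v^2$ on the left together with the pure constant on the right produces $\tfrac{4K_1}{\mu_2}$ directly from the definition of $K_1$. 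Your version can be repaired by applying Young once more, $(\mu_2+K)v\le\tfrac{\mu_2}{2}v^2+\tfrac{(\mu_2+K)^2}{2\mu_2}$, inside the time integral and absorbing $\tfrac{\mu_2}{2}\int_t^{t+\tau}\int_\Omega v^2$ on the left; then $\tfrac{\mu_2}{2}\int_t^{t+\tau}\int_\Omega v^2\le K_1+\tfrac{(\mu_2+K+1)^2|\Omega|}{2\mu_2}\le 2K_1$, which recovers \eqref{L1-2}.
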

\begin{proof}
Integrating the second equation of \eqref{pd-2} by parts with respect to $x\in\Omega$, and using the fact $\eqref{Loow}$ as well as the positivity of $(u, v, w),$ we end up with
\begin{eqnarray*}
\frac{d}{dt}\int_\Omega v+\int_\Omega v+ \mu_2 \int_\Omega v^2&=& (\mu_2+1) \int_\Omega v+\int_\Omega uv-b_2 \int_\Omega vw \\
&\leq &(\mu_2+K+1) \int_\Omega v\\
&\leq &  \frac{\mu_2}{2} \int_\Omega v^2+\frac{(\mu_2+K+1)^2|\Omega|}{2\mu_2},
\end{eqnarray*}
which gives
\begin{eqnarray}\label{ev0}
\frac{d}{dt}\int_\Omega v+\int_\Omega v+ \frac{\mu_2}{2} \int_\Omega v^2\leq \frac{(\mu_2+K+1)^2|\Omega|}{2\mu_2}.
\end{eqnarray}
Applying the Gr\"{o}nwall inequality to \eqref{ev0}, we derive
\begin{eqnarray}\label{ev1}
\int_\Omega v \leq \|v_0\|_{L^1}+ \frac{(\mu_2+K+1)^2|\Omega|}{2\mu_2}:=K_1,
\end{eqnarray}
which yields \eqref{L1v}.
Integrating \eqref{ev0} over $(t, t+\tau)$ and using \eqref{ev1}, one has
\begin{eqnarray*}\label{ev2}
\int_t^{t+\tau} \int_\Omega v^2\leq\frac{2}{\mu_2} \int_\Omega v +\frac{(\mu_2+K+1)^2|\Omega|\tau}{\mu_2^2} \leq \frac{2 \|v_0\|_{L^1}}{\mu_2}+\frac{2(\mu_2+K+1)^2|\Omega|}{\mu_2^2}\leq \frac{4K_1}{\mu_2},
\end{eqnarray*}
which gives \eqref{L1-2}.
\end{proof}

\begin{lemma}\label{L2u}
		Suppose the assumptions in Theorem \ref{GB} hold. Then  there exist two constants $K_2>0$ and $K_3>0$ which are independent of $\xi$ and $\chi$ such that the solution of \eqref{pd-2} satisfies
\begin{equation}\label{L1w}
	\|w(\cdot,t)\|_{L^1}\leq K_2\ \ \mathrm{for\ all}\ \ t\in(0,T_{max}),
	\end{equation}
and
		\begin{equation}\label{L1w-t}
\int_t^{t+\tau}\int_\Omega w^2\leq K_3
\ \ \mathrm{for\ all}\ \ t\in(0,\widetilde{T}_{max}),
		\end{equation}
		where $\tau$ and $  \widetilde{T}_{max}$ are defined by \eqref{DTT}.
	\end{lemma}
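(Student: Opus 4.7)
The plan is to mimic the argument used for $v$ in Lemma \ref{Le2.3}, now applied to the $w$-equation, while controlling the two extra coupling terms $vw$ and $c_3\tfrac{uw}{u+w}$ by means of Young's inequality and the bounds already at hand. First, I would integrate the third equation of \eqref{pd-2} over $\Omega$. The taxis flux $\chi w(v\nabla u+u\nabla v)$ contributes nothing because of the Neumann boundary condition, which is precisely why the resulting bound is independent of both $\xi$ and $\chi$. Using positivity of all unknowns together with the ratio bound $\tfrac{uw}{u+w}\le u\le K$ coming from Lemma \ref{UW}, I obtain an inequality of the form
\begin{equation*}
\frac{d}{dt}\int_\Omega w+\int_\Omega w+\mu_3\int_\Omega w^2\le (\mu_3+1)\int_\Omega w+\int_\Omega vw+c_3K|\Omega|.
\end{equation*}

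Applying Young's inequality in the two forms $(\mu_3+1)w\le \tfrac{\mu_3}{4}w^2+\tfrac{(\mu_3+1)^2}{\mu_3}$ and $vw\le \tfrac{\mu_3}{4}w^2+\tfrac{1}{\mu_3}v^2$, I would absorb the $w^2$-contributions into the left-hand side to arrive at
\begin{equation*}
\frac{d}{dt}\int_\Omega w+\int_\Omega w+\frac{\mu_3}{2}\int_\Omega w^2\le \frac{1}{\mu_3}\int_\Omega v^2+C_0
\end{equation*}
with $C_0=c_3K|\Omega|+\tfrac{(\mu_3+1)^2|\Omega|}{\mu_3}$. Dropping the nonnegative $w^2$-term temporarily gives a scalar ODI $y'(t)+y(t)\le g(t)$ with $y(t)=\int_\Omega w$ and $g(t)=\tfrac{1}{\mu_3}\int_\Omega v^2+C_0$. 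The variation-of-constants formula yields $y(t)\le y(0)e^{-t}+\int_0^t e^{-(t-s)}g(s)\,ds$.

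The main obstacle, and the reason this is slightly more delicate than Lemma \ref{Le2.3}, is that $g$ is \emph{not} pointwise bounded in $t$ but only locally-in-time integrable via the estimate $\int_t^{t+\tau}\int_\Omega v^2\le \tfrac{4K_1}{\mu_2}$ from Lemma \ref{Le2.3}. The fix is a standard splitting argument: partition $[0,t]$ into intervals of length $\tau$, so that
\begin{equation*}
\int_0^t e^{-(t-s)}g(s)\,ds\le \sum_{k\ge 0}e^{-k\tau}\int_{t-(k+1)\tau}^{t-k\tau}g(s)\,ds\le \frac{1}{1-e^{-\tau}}\sup_{s\ge 0}\int_s^{s+\tau}g,
\end{equation*}
and the supremum on the right is finite by Lemma \ref{Re2.3}. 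This delivers the uniform bound $\|w(\cdot,t)\|_{L^1}\le K_2$ claimed in \eqref{L1w}.

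Finally, once the $L^1$-bound on $w$ is known, I would re-use the differential inequality above and integrate it on $(t,t+\tau)$, so that $\tfrac{\mu_3}{2}\int_t^{t+\tau}\int_\Omega w^2 \le \int_\Omega w(\cdot,t)+\tfrac{1}{\mu_3}\int_t^{t+\tau}\int_\Omega v^2+C_0\tau$, and invoking \eqref{L1v}, \eqref{L1-2} gives the desired space-time estimate \eqref{L1w-t} with some constant $K_3$ independent of $t$, $\xi$ and $\chi$. The $\xi,\chi$-independence is evident throughout, since the taxis terms integrate out on the boundary and play no role in any of the estimates above.
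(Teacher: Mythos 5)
Your proposal is correct, but it follows a genuinely different route from the paper. The paper's trick is to take the linear combination $\int_\Omega (v+b_2 w)$: multiplying the $w$-equation by $b_2$ and adding the $v$-equation makes the reaction terms $-b_2\int_\Omega vw$ (from the $v$-equation) and $+b_2\int_\Omega vw$ (from the scaled $w$-equation) cancel exactly, so the $vw$-coupling disappears and only $L^1$-in-$v$ information is needed; a plain Gr\"onwall inequality with a pointwise-bounded right-hand side then closes the $L^1$ bound on $v+b_2w$, and the space--time $L^2$ bound on $w$ follows by integrating the same differential inequality over $(t,t+\tau)$. You instead work with $\int_\Omega w$ alone, absorb $\int_\Omega vw$ by Young's inequality, and thereby introduce $\int_\Omega v^2$ on the right-hand side, which is not pointwise bounded in time. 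You correctly identify this as the main obstacle and resolve it with the variation-of-constants formula plus a dyadic splitting of the time integral against the exponential kernel, using only the local-in-time $L^2$ bound from Lemma \ref{Le2.3}; this is essentially the Gr\"onwall-type lemma of \cite{SSW14} that the paper itself invokes later for $\|\nabla u\|_{L^2}$. Both approaches are valid; the paper's is cleaner because the cancellation avoids the splitting machinery, while yours is more generic and would work without the special sign structure linking the $\pm b_2 vw$ terms. Two small slips to fix: the reference ``Lemma \ref{Re2.3}'' should read Lemma \ref{Le2.3}, and in the final step you should cite the bound \eqref{L1w} on $\int_\Omega w$ that you have just proved, not \eqref{L1v}.
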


\begin{proof}
Multiplying  the third equation in \eqref{pd-2} by $b_2$,  adding the result to the second equation in \eqref{pd-2}, followed by an integration,  we have
\begin{equation}\label{L1w-1}
\begin{split}
&\frac{d}{dt} \int_\Omega (v+b_2w)+\mu_2\int_\Omega v^2+b_2\mu_3\int_\Omega w^2\\
&=\mu_2\int_\Omega v+\int_\Omega uv +b_2\mu_3\int_\Omega w+b_2c_3\int_\Omega\frac{uw}{u+w}.\\
\end{split}
\end{equation}
Adding $\int_\Omega (v+b_2w)$ to both sides of \eqref{L1w-1}, along with \eqref{Loow} and \eqref{L1v},  one obtains
\begin{equation*}
\begin{split}
&\frac{d}{dt} \int_\Omega (v+b_2w)+ \int_\Omega (v+b_2w)+\mu_2\int_\Omega v^2+b_2\mu_3\int_\Omega w^2\\
&=(\mu_2+1)\int_\Omega v+\int_\Omega uv +b_2(\mu_3+1)\int_\Omega w+b_2c_3\int_\Omega\frac{uw}{u+w}\\
&\leq (\mu_2+1+K)\int_\Omega v+b_2(\mu_3+1+c_3)\int_\Omega w\\
&\leq (\mu_2+1+K)K_1+b_2(\mu_3+1+c_3)\int_\Omega w\\
&\leq (\mu_2+1+K)K_1+\frac{b_2\mu_3}{2}\int_\Omega w^2+\frac{b_2(\mu_3+1+c_3)^2|\Omega|}{2\mu_3},
\end{split}
\end{equation*}
which gives
\begin{equation}\label{L1w-2}
\frac{d}{dt} \int_\Omega (v+b_2w)+ \int_\Omega (v+b_2w)+\frac{b_2\mu_3}{2}\int_\Omega w^2\leq (\mu_2+1+K)K_1+\frac{b_2(\mu_3+1+c_3)^2|\Omega|}{2\mu_3}.
\end{equation}
With Gr\"{o}nwall's inequality applied to \eqref{L1w-2}, one has
\begin{equation}\label{L1w-3}
\int_\Omega (v+b_2w)\leq  \int_\Omega (v_0+b_2w_0)+(\mu_2+1+K)K_1+\frac{b_2(\mu_3+1+c_3)^2|\Omega|}{2\mu_3},
\end{equation}
which gives \eqref{L1w} by defining
\begin{equation}\label{K2D}
K_2:=\frac{\|v_0\|_{L^1}+b_2\|w_0\|_{L^1}+ (\mu_2+1+K)K_1+\frac{b_2(\mu_3+1+c_3)^2|\Omega|}{2\mu_3}}{b_2}.
\end{equation}
Then integrating \eqref{L1w-2} over $(t, t+\tau)$, and using \eqref{L1w-3} and \eqref{K2D}, we derive that
\begin{equation*}
\begin{split}
\frac{b_2\mu_3}{2}\int_t^{t+\tau}\int_\Omega  w^2
&\leq \int_\Omega (v+b_2w)+\left[(\mu_2+1+K)K_1+\frac{b_2(\mu_3+1+c_3)^2|\Omega|}{2\mu_3}\right]\tau\\
&\leq b_2K_2+K_1+(\mu_2+1+K)K_1+\frac{b_2(\mu_3+1+c_3)^2|\Omega|}{2\mu_3},
\end{split}
\end{equation*}
which gives \eqref{L1w-t} by letting
\begin{equation}\label{K3}
K_3:=\frac{2}{b_2\mu_3}\cdot\left( b_2K_2+(\mu_2+2+K)K_1+\frac{b_2(\mu_3+1+c_3)^2|\Omega|}{2\mu_3}\right).
\end{equation}
The proof of Lemma \ref{L2u} is completed.
\end{proof}


For later use, we list some well-known $L^p$-$L^q$ estimates for the Neumann heat semigroup (cf. \cite{Winkler-2010-JDE}).
\begin{lemma} {\color{black}\label{heatsemigroup} Let $(e^{td\Delta})_{t\geq 0}$ be the Neumann heat semigroup in $\Omega$,  and let $\lambda_1>0$ denote the first nonzero eigenvalue of $-\Delta$ in $\Omega$ under Neumann boundary conditions, where $d$ is a positive constant.} Then for all $t>0$, there exist some constants  $\gamma_i\,(i=1,2,3,4)$ depending only on $\Omega$ such that

$(\mathrm{i})$ ~~If $2\leq p<\infty$, then
\begin{equation}\label{Lp-1}
\|\nabla e^{td\Delta}z\|_{L^p}\leq \gamma_1 (1+t^{-\frac{n}{2}(\frac{1}{q}-\frac{1}{p})})e^{-d\lambda_1t}\|\nabla z\|_{L^q}
\end{equation}
for all $z\in W^{1,q}(\Omega)$.

$(\mathrm{ii})$ ~~ If  $1\leq q\leq p\leq\infty$, then
\begin{equation}\label{Lp-2}
\|\nabla e^{td\Delta}z\|_{L^p}\leq \gamma_2\left(1+t^{-\frac{1}{2}-\frac{n}{2}(\frac{1}{q}-\frac{1}{p})}\right)e^{-d\lambda_1 t}\|z\|_{L^q}
\end{equation}
for all $z\in L^q(\Omega)$.

$(\mathrm{iii})$ ~~If $1\leq q\leq p\leq \infty$, then
\begin{equation}\label{Lp-3}
\|e^{td\Delta}z\|_{L^p}\leq \gamma_3\left(1+t^{-\frac{n}{2}(\frac{1}{q}-\frac{1}{p})}\right) \|z\|_{L^q}
\end{equation}
for all $z\in L^q(\Omega)$.

$(\mathrm{iv})$ ~~If $1< q\leq p\leq \infty$, then
\begin{equation}\label{Lp-4}
\|e^{td\Delta}\nabla\cdot z\|_{L^p}\leq \gamma_4\left(1+t^{-\frac{1}{2}-\frac{n}{2}(\frac{1}{q}-\frac{1}{p})}\right)e^{-d\lambda_1 t}\|z\|_{L^q}
\end{equation}
 for all $z\in (C_0^\infty(\Omega))^n$.
\end{lemma}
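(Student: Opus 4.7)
\textbf{Plan for the proof of Lemma \ref{heatsemigroup}.} The plan is to combine pointwise kernel estimates for the Neumann heat semigroup with a spectral-gap argument. Let $p_t(x,y)$ denote the kernel of $e^{td\Delta}$ on $\Omega$ with Neumann boundary condition, and let $(\lambda_k,\phi_k)_{k\geq 0}$ be the associated orthonormal eigensystem, with $\lambda_0=0$, $\phi_0\equiv |\Omega|^{-1/2}$, and $\lambda_1>0$. The two core ingredients I would rely on are (a) the Gaussian upper bound
\begin{equation*}
0< p_t(x,y)\leq C\, t^{-n/2}\, e^{-c|x-y|^2/t},\qquad 0<t\leq 1,
\end{equation*}
together with its first-order analogue $|\nabla_x p_t(x,y)|\leq C\, t^{-(n+1)/2}\, e^{-c|x-y|^2/t}$, valid for the Neumann kernel on a bounded smooth domain, and (b) the exponential stabilization of the semigroup to its projector onto constants,
\begin{equation*}
\bigl\| e^{td\Delta} z-\bar z\bigr\|_{L^2}\leq e^{-d\lambda_1 t}\,\bigl\| z-\bar z\bigr\|_{L^2},\qquad \bar z=|\Omega|^{-1}\!\int_\Omega z,
\end{equation*}
which comes directly from the spectral decomposition.

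First I would establish (iii). For $0<t\leq 1$, Young's convolution-type inequality applied to the kernel bound yields the factor $t^{-\frac{n}{2}(\frac{1}{q}-\frac{1}{p})}$; for $t>1$ the spectral gap makes $e^{td\Delta}$ a uniformly bounded map on every $L^p$, so the two regimes combine into the claimed constant $\gamma_3\bigl(1+t^{-\frac{n}{2}(\frac{1}{q}-\frac{1}{p})}\bigr)$. For (ii) and (iv), I would differentiate the kernel once in $x$ (or, for (iv), integrate by parts so that $e^{td\Delta}\nabla\cdot z=\int_\Omega \nabla_x p_t(x,y)\cdot z(y)\,dy$) and apply the gradient kernel bound; the loss of one half-power of $t$ comes from the extra factor $t^{-1/2}$ in that bound, while the factor $e^{-d\lambda_1 t}$ appears because both $\nabla e^{td\Delta}$ and $e^{td\Delta}\nabla\cdot$ annihilate constants and hence act on the mean-zero subspace. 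The cleanest way to realise this is a semigroup splitting $t=\tfrac{t}{2}+\tfrac{t}{2}$: use (iii) at time $t/2$ to pass from $L^q$ to $L^p$, and the $L^p$-decay $\|\nabla e^{sd\Delta}\|_{L^p\to L^p}\leq C e^{-d\lambda_1 s}$ at time $t/2$. The gradient-to-gradient estimate (i) then follows from the same $t/2+t/2$ splitting: apply (ii) at the first half-time to gain the $L^q\to L^p$ integrability factor, and the $L^p\to L^p$ analytic-semigroup decay at the second half-time on the mean-zero subspace.

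The main obstacle I anticipate is establishing the Gaussian gradient bound for the Neumann kernel up to the boundary, because the clean reflection argument in a half-space is only local. On a general smooth domain one must combine an interior parametrix construction with a boundary straightening chart and conormal reflection; the usual Gaussian upper bound and its first-derivative version then follow from parabolic Schauder regularity and standard sub/supersolution comparison. Once these pointwise estimates are in hand, all four assertions reduce to routine $L^p$-$L^q$ duality against the kernel combined with the $L^2$ spectral gap, and the final packaging is exactly the one carried out in \cite{Winkler-2010-JDE}, which is why the lemma can be quoted without proof in the present paper.
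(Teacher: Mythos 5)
The paper does not prove this lemma at all: it is quoted directly from Lemma~1.3 of \cite{Winkler-2010-JDE} (with the diffusion constant $d$ inserted, which is harmless by rescaling time), as the sentence introducing it indicates. So there is no in-paper argument to compare against. That said, your plan for parts (ii), (iii) and (iv) — Gaussian upper bound for the Neumann kernel and its first derivative, Young's inequality in the short-time regime, and spectral gap together with the mean-zero reduction in the long-time regime — is the correct and standard route, and matches the strategy underlying Winkler's proof. One cosmetic slip: in (iv) the integration by parts produces $-\nabla_y p_t(x,y)$, not $\nabla_x p_t(x,y)$; this is harmless since $p_t(x,y)=p_t(y,x)$ gives both gradients the same Gaussian bound.

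Your plan for (i) has a genuine gap. You propose to obtain it by applying (ii) over one half-time and an $L^p\to L^p$ decay over the other, but (ii) controls $\|\nabla e^{(t/2)d\Delta}\tilde z\|_{L^p}$ by $\|\tilde z\|_{L^q}$, not by $\|\nabla\tilde z\|_{L^q}$, and carries the extra factor $t^{-1/2}$. The composition therefore only reproduces a variant of (ii): the right-hand side still involves $\|z\|_{L^q}$ rather than $\|\nabla z\|_{L^q}$, and the singularity is $t^{-\frac12-\frac n2(\frac1q-\frac1p)}$, strictly worse as $t\to0$ than the claimed $t^{-\frac n2(\frac1q-\frac1p)}$. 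The structural obstruction is that $\nabla$ does not commute with $e^{td\Delta}$ under Neumann boundary conditions, so no composition of (ii)--(iv) can yield a gradient-to-gradient smoothing with the sharp exponent. A correct route needs a separate short-time $W^{1,q}\to W^{1,p}$ ingredient, for instance the mixed second-derivative kernel bound $|\nabla_x\nabla_y p_t(x,y)|\le C\,t^{-(n+2)/2}e^{-c|x-y|^2/t}$ (so one derivative can be shifted onto $z$), or a direct pointwise/energy estimate for $|\nabla e^{td\Delta}z|$; one then combines this with the Poincar\'e inequality $\|z-\bar z\|_{L^q}\le C\|\nabla z\|_{L^q}$ and the spectral gap to supply the factor $e^{-d\lambda_1 t}$ for $t\ge1$.
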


\section{Proof of Theorem \ref{GB}}
In this section, we shall derive the global {\it a priori} estimates of solutions to \eqref{pd-2} which enable us to extend local solutions to global ones. Since the global stability analysis in the next section requires us to elucidate how the upper bounds of $\|v\|_{L^\infty}$ and $\|w\|_{L^\infty}$ depend on the system parameters, we shall keep the dependencies of relevant estimates on the system parameters for later use although estimates uniformly in time will suffice to obtain the global existence/boundedness of solutions.
\subsection{Boundedness of $\|v(\cdot,t)\|_{L^\infty}$} 
Since the conditions imposed for the global stability of coexistence steady states shown in section 4 depend on $\|v(\cdot,t)\|_{L^\infty}$, we shall detail the dependencies of the upper bound of $\|v(\cdot,t)\|_{L^\infty}$ on the system parameters as transparent as possible for later use.
\begin{lemma}\label{L2}
Let the assumptions in Theorem \ref{GB} hold and $(u, v, w)$ be the solution of   \eqref{pd-2}. Then there exist two positive constants $K_4$ and $K_5$ which are  independent of $\xi$ and $\chi$ such that
\begin{equation}\label{L2u-1}
\|\nabla u\|_{L^2}\leq K_4 \,\,\mathrm{\ for \ all}\ t\in (0, {T}_{max}),
\end{equation}
and
\begin{equation}\label{L2u-2}
\int_t^{t+\tau}\int_\Omega |\Delta u|^2 \leq K_5\,\, \mathrm{\ for \ all}\ t\in (0, \widetilde{T}_{max}),
\end{equation}
where
 $\tau$ and $\widetilde{T}_{max}$ are defined in \eqref{DTT}.
\end{lemma}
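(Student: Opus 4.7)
Since the $u$-equation in \eqref{pd-2} contains neither $\xi$ nor $\chi$, the plan is to test it against $-\Delta u$, control the resulting right-hand side purely in terms of $\|u\|_{L^\infty}$ (which is already known to be bounded independently of $\xi,\chi$ by Lemma \ref{UW}) and $\|v\|_{L^2}^2$ (whose time-slab integral is controlled by Lemma \ref{Le2.3}), and then close the argument via a uniform Gr\"onwall-type step that exploits the local-in-time $L^2$ integrability of $v$.

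First I would multiply the equation for $u$ by $-\Delta u$ and integrate over $\Omega$; the Neumann boundary condition eliminates the boundary term, yielding
\[
\tfrac{1}{2}\frac{d}{dt}\int_\Omega|\nabla u|^2+d_1\int_\Omega|\Delta u|^2
=-\int_\Omega\Delta u\,\Bigl[\mu_1 u(1-u)-b_1 uv-b_3\tfrac{uw}{u+w}\Bigr].
\]
Since $0\leq u\leq K$ by \eqref{Loow} and $\tfrac{uw}{u+w}\leq u\leq K$, the terms $\mu_1u(1-u)$ and $b_3\tfrac{uw}{u+w}$ are pointwise bounded by constants, while $b_1 uv\leq b_1 Kv$. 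Splitting each contribution via Young's inequality and absorbing a factor $\tfrac{d_1}{2}\|\Delta u\|_{L^2}^2$ on the left, I would arrive at a differential inequality of the form
\[
\frac{d}{dt}\int_\Omega|\nabla u|^2+\frac{d_1}{2}\int_\Omega|\Delta u|^2\leq C_1+C_2\int_\Omega v^2,
\]
with $C_1,C_2$ depending only on $K,b_1,b_3,\mu_1,d_1,|\Omega|$, hence independent of $\xi$ and $\chi$.

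Next I would use the elementary identity $\int_\Omega|\nabla u|^2=-\int_\Omega u\,\Delta u$ together with Cauchy--Schwarz and Young to write $\|\nabla u\|_{L^2}^2\leq \tfrac{d_1}{4C_3}\|\Delta u\|_{L^2}^2+C_3\|u\|_{L^2}^2$ for an appropriate constant, which, combined with $\|u\|_{L^2}^2\leq K^2|\Omega|$, allows the replacement $\tfrac{d_1}{2}\|\Delta u\|_{L^2}^2\geq \alpha\|\nabla u\|_{L^2}^2+\tfrac{d_1}{4}\|\Delta u\|_{L^2}^2 - C_4$ for some $\alpha>0$. Setting $y(t):=\int_\Omega|\nabla u|^2$ this yields
\[
y'(t)+\alpha y(t)+\tfrac{d_1}{4}\int_\Omega|\Delta u|^2\leq C_5+C_2\int_\Omega v^2=:h(t).
\]
Lemma \ref{Le2.3} gives $\int_t^{t+\tau}h\leq C_6$ uniformly in $t\in(0,\widetilde T_{\max})$. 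The variation-of-constants representation of $y$ together with a geometric summation over intervals of length $\tau$ then produces a bound $y(t)\leq y(0)+\tfrac{C_6}{1-e^{-\alpha\tau}}=:K_4$, proving \eqref{L2u-1}. Finally, integrating the displayed ODI over $[t,t+\tau]$ and discarding the nonnegative $\alpha y$ term gives
\[
\tfrac{d_1}{4}\int_t^{t+\tau}\!\!\int_\Omega|\Delta u|^2\leq y(t)+\int_t^{t+\tau}h\leq K_4+C_6,
\]
which yields \eqref{L2u-2} with some $K_5>0$.

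I do not expect a serious obstacle: the $u$-equation decouples from the taxis gradients entirely, so every constant arising above depends only on data already controlled in an $(\xi,\chi)$-independent way. The only mildly delicate point is that $\int_\Omega v^2$ is not known to be bounded pointwise in $t$, which is precisely why the argument is phrased through a uniform Gr\"onwall step using the slab estimate $\int_t^{t+\tau}\int_\Omega v^2\leq 4K_1/\mu_2$ from Lemma \ref{Le2.3}.
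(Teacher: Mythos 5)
Your proposal is correct and follows essentially the same route as the paper: test the $u$-equation against $-\Delta u$, bound the reaction terms via $\|u\|_{L^\infty}\le K$ so that only $\int_\Omega v^2$ survives on the right, manufacture a $\|\nabla u\|_{L^2}^2$ term on the left from $\int_\Omega|\nabla u|^2=-\int_\Omega u\,\Delta u$ plus Young, and then close with the uniform Gr\"onwall step (the paper invokes \cite[Lemma~3.4]{SSW14}, which encodes exactly the variation-of-constants/geometric-summation argument you sketch) using the slab estimate from Lemma~\ref{Le2.3}; the slab bound \eqref{L2u-2} then follows by integrating the same differential inequality over $(t,t+\tau)$. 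The only difference is cosmetic: the paper tracks the constants explicitly for later use in the stability analysis.
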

\begin{proof}
Multiplying the first equation of \eqref{pd-2}  by $-\Delta u$ and using the fact that $\|u(\cdot,t)\|_{L^\infty}\leq K$ in Lemma \ref{UW}, we end up with
\begin{equation*}
\begin{split}
\frac{1}{2}\frac{d}{dt}\int_\Omega |\nabla u|^2	+d_1 \int_\Omega |\Delta u|^2
&=-\mu_1\int_\Omega u(1-u)\Delta u +b_1\int_\Omega uv\Delta u +b_3\int_\Omega \frac{uw}{u+w}\Delta u \\
&\leq  K[\mu_1(1+K)+b_3]\int_\Omega |\Delta u|+b_1K\int_\Omega v|\Delta u|\\
&\leq \frac{d_1}{2}\int_\Omega |\Delta u|^2+\frac{K^2[\mu_1(1+K)+b_3]^2|\Omega|}{d_1}+\frac{b_1^2K^2}{d_1}\int_\Omega v^2,
\end{split}
\end{equation*}
which gives
\begin{eqnarray}\label{L2u-3}
\frac{d}{dt}\int_\Omega |\nabla u|^2+d_1 \int_\Omega |\Delta u|^2
\leq \frac{2b_1^2K^2}{d_1}\int_\Omega v^2+\frac{2K^2[\mu_1(1+K)+b_3]^2|\Omega|}{d_1}.
\end{eqnarray}
On the other hand, using \eqref{Loow} and  Young's inequality, we have
\begin{equation*}
\int_\Omega |\nabla  u|^2=\int_\Omega \nabla u\cdot \nabla u=-\int_\Omega u\Delta u\leq K\int_\Omega|\Delta u|\leq \frac{d_1}{4} \int_\Omega |\Delta  u|^2+\frac{K^2}{d_1}|\Omega|,
\end{equation*}
which, substituted into \eqref{L2u-3}, gives
\begin{equation}\label{L2u-4}
\begin{split}
\frac{d}{dt}\int_\Omega |\nabla u|^2+\int_\Omega |\nabla u|^2+\frac{3d_1}{4} \int_\Omega |\Delta u|^2
\leq \frac{2b_1^2K^2}{d_1}\int_\Omega v^2+\frac{K^2\left(2[\mu_1(1+K)+b_3]^2|\Omega|+|\Omega|\right)}{d_1}.
\end{split}
\end{equation}
Then applying the Gronwall type inequality (e.g. see \cite[Lemma 3.4]{SSW14}) and using the fact \eqref{L1-2}, from \eqref{L2u-4}, we obtain \eqref{L2u-1} with
\begin{equation}\label{K4}
K_4:=\left[\|\nabla u_0\|_{L^2}^2+\frac{2\tau+1}{\tau}\cdot \left(\frac{4b_1^2K^2 K_1}{d_1\mu_2}+\frac{K^2\left(2[\mu_1(1+K)+b_3]^2|\Omega|+|\Omega|\right)}{d_1}\right)\right]^\frac{1}{2}.
\end{equation}
Now we integrate \eqref{L2u-4} over $(t, t+\tau)$, and use \eqref{L1-2} and \eqref{L2u-1} to obtain  for all $t\in (0, \widetilde{T}_{max})$ that
\begin{equation*}
\begin{split}
\frac{3d_1}{4}\int_t^{t+\tau}\int_\Omega |\Delta u|^2
&\leq \int_\Omega |\nabla u|^2+\frac{2b_1^2K^2}{d_1}\int_t^{t+\tau}\int_\Omega v^2 +\frac{K^2\left(2[\mu_1(1+K)+b_3]^2|\Omega|+|\Omega|\right)}{d_1}\\
&\leq K_4^2+\frac{8b_1^2K^2 K_1}{d_1\mu_2}+\frac{K^2\left(2[\mu_1(1+K)+b_3]^2|\Omega|+|\Omega|\right)}{d_1},
\end{split}
\end{equation*}
which gives \eqref{L2u-2} with
\begin{equation}\label{K5}
K_5:=\frac{4}{3d_1}\cdot\left(K_4^2+\frac{8b_1^2K^2 K_1}{d_1\mu_2}+\frac{K^2\left(2[\mu_1(1+K)+b_3]^2|\Omega|+|\Omega|\right)}{d_1}\right).
\end{equation}
Then we complete the proof of Lemma \ref{L2}.
\end{proof}
\begin{lemma}\label{L3}
Let the assumptions in Theorem \ref{GB} hold and $(u, v, w)$ be the solution of \eqref{pd-2}. Then there exists a positive constant $K_6$ independent of $\chi$ such that
\begin{equation}\label{L3-1}
\|v(\cdot,t)\|_{L^2}\leq K_6\,\,\mathrm{\ for \ all}\ t\in (0, {T}_{max}).
\end{equation}
\end{lemma}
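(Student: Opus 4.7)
The plan is to test the second equation of \eqref{pd-2} against $v$ and integrate by parts. The cross-diffusion term, after two integrations by parts with Neumann boundary data, yields
\begin{equation*}
-\int_\Omega v\,\nabla\cdot(\xi v\nabla u) \;=\; \xi\int_\Omega v\nabla u\cdot\nabla v \;=\; \tfrac{\xi}{2}\int_\Omega \nabla u\cdot\nabla v^2 \;=\; -\tfrac{\xi}{2}\int_\Omega v^2\Delta u.
\end{equation*}
Combining this with $\|u\|_{L^\infty}\leq K$ from Lemma \ref{UW}, the nonnegativity $-b_2\int_\Omega v^2 w \leq 0$, and keeping the dissipative cubic $\mu_2\int_\Omega v^3$, I would arrive at
\begin{equation*}
\tfrac{1}{2}\tfrac{d}{dt}\int_\Omega v^2 + d_2\int_\Omega|\nabla v|^2 + \mu_2\int_\Omega v^3 \;\leq\; \tfrac{\xi}{2}\int_\Omega v^2|\Delta u| + (\mu_2+K)\int_\Omega v^2.
\end{equation*}

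The key step is to control the taxis term $\tfrac{\xi}{2}\int_\Omega v^2|\Delta u|$. By the Cauchy--Schwarz inequality together with the two-dimensional Gagliardo--Nirenberg estimate $\|v\|_{L^4}^2 \leq C_{GN}\bigl(\|\nabla v\|_{L^2}\|v\|_{L^2}+\|v\|_{L^2}^2\bigr)$,
\begin{equation*}
\int_\Omega v^2|\Delta u| \;\leq\; \|v\|_{L^4}^2\,\|\Delta u\|_{L^2} \;\leq\; C_{GN}\bigl(\|\nabla v\|_{L^2}\|v\|_{L^2}+\|v\|_{L^2}^2\bigr)\|\Delta u\|_{L^2}.
\end{equation*}
A Young inequality then absorbs $\tfrac{d_2}{2}\|\nabla v\|_{L^2}^2$ into the dissipative term on the left, reducing the estimate to a differential inequality of the form
\begin{equation*}
y'(t) \;\leq\; \bigl(C_1\xi^2\|\Delta u(\cdot,t)\|_{L^2}^2 + C_2\xi\|\Delta u(\cdot,t)\|_{L^2} + C_3\bigr)\,y(t), \qquad y(t):=\int_\Omega v^2(\cdot,t).
\end{equation*}

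I would then invoke the uniform Gronwall--type lemma already used in the proof of Lemma \ref{L2}. Its two prerequisites are met here: the coefficient of $y(t)$ has uniformly bounded $\tau$-mean since $\int_t^{t+\tau}\|\Delta u(\cdot,s)\|_{L^2}^2\,ds\leq K_5$ by \eqref{L2u-2} (a single Cauchy--Schwarz handles the linear-in-$\|\Delta u\|_{L^2}$ contribution), and the solution itself satisfies $\int_t^{t+\tau}y(s)\,ds\leq \tfrac{4K_1}{\mu_2}$ by \eqref{L1-2}. This yields $y(t)\leq K_6$ for all $t\in[\tau,T_{\max})$; on $[0,\tau]$ the bound is inherited from $v_0\in W^{2,\infty}(\Omega)$ by standard Gronwall. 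The $\chi$-independence of $K_6$ follows by inspection: $\chi$ appears only in the $w$-equation, the $w$-dependent contribution $-b_2vw$ in the $v$-equation is discarded by positivity, and the ancestor constants $K, K_1, K_4, K_5$ are all $\chi$-free by Lemmas \ref{UW}, \ref{Le2.3}, and \ref{L2}.

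The principal subtlety is that in two space dimensions the taxis coupling between $u$ and $v$ is critically scaled: the Gagliardo--Nirenberg exponent $1$ on $\|\nabla v\|_{L^2}$ is borderline, and Young absorption forces a coefficient proportional to $\|\Delta u\|_{L^2}^2$ that is \emph{only} locally-in-time integrable rather than uniformly bounded. The argument is saved by the fortuitous matching of this local-in-time integrability with the local-in-time $L^2$-integrability of $v$ produced by the logistic damping, which is exactly what is needed to close the uniform Gronwall inequality.
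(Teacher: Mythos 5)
Your proof is correct and reaches the same final differential inequality structure, but by a genuinely different and in fact slightly leaner path. The paper handles the taxis term after a single integration by parts, obtaining $\xi\int_\Omega v\nabla u\cdot\nabla v$, then splits off $\tfrac{\xi^2}{2d_2}\int_\Omega v^2|\nabla u|^2$, applies H\"older, and uses the Gagliardo--Nirenberg inequality on \emph{both} $\|v\|_{L^4}$ and $\|\nabla u\|_{L^4}$, the latter requiring the uniform bound $\|\nabla u\|_{L^2}\leq K_4$ from Lemma~\ref{L2}. You instead integrate by parts a second time, turning the taxis contribution into $-\tfrac{\xi}{2}\int_\Omega v^2\Delta u$, so that Cauchy--Schwarz and a single Gagliardo--Nirenberg estimate on $\|v\|_{L^4}$ suffice; the $\|\nabla u\|_{L^4}$ interpolation and the $K_4$ bound are entirely bypassed, and your proof needs only \eqref{L1-2} and \eqref{L2u-2}, not \eqref{L2u-1}. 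Both approaches close identically: each produces $y'\leq \alpha(t)\,y$ with $y=\|v\|_{L^2}^2$ and $\alpha\in L^1_{\mathrm{loc},\mathrm{unif}}$, and both exploit the uniform-in-time $L^2$-integrability $\int_t^{t+\tau}\|v\|_{L^2}^2\leq \tfrac{4K_1}{\mu_2}$ to find a good starting time $t_0\in[(t-\tau)_+,t)$ before integrating.

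One small inaccuracy in your write-up: you attribute the closing step to ``the uniform Gronwall-type lemma already used in the proof of Lemma~\ref{L2},'' i.e.\ to \cite[Lemma 3.4]{SSW14}. That lemma applies to inequalities of the form $y'+y+g\leq h$ with $\int_t^{t+\tau}h$ uniformly bounded; it does not require, and does not use, local-in-time integrability of $y$ itself. What you actually need (and correctly describe with your two ``prerequisites'') is the Temam-style uniform Gronwall argument for $y'\leq \alpha(t)y$, which requires both $\int_t^{t+\tau}\alpha$ and $\int_t^{t+\tau}y$ uniformly bounded. The paper does not cite a named lemma for this step but instead carries it out inline within the proof of Lemma~\ref{L3}, finding $t_0$ with $\|v(\cdot,t_0)\|_{L^2}^2\leq C_5$ and then integrating. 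This is a labeling error only; your argument is sound.
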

\begin{proof}
Multiplying the second equation of the system \eqref{pd-2} by $v$, integrating the result by parts, and appealing to Young's inequality, we obtain
\begin{equation*}
\begin{split}
&\frac{1}{2}\frac{d}{dt}\int_\Omega v^2 +	 d_2 \int_\Omega |\nabla v|^2+\mu_2\int_\Omega v^3+b_2\int_\Omega v^2 w\\
&=\xi \int_\Omega v \nabla u \cdot\nabla v +\mu_2\int_\Omega v^2+\int_\Omega u v^2\\
&\leq \frac{d_2}{2}\int_\Omega |\nabla v|^2+\frac{\xi^2}{2d_2}\int_\Omega v^2 |\nabla u|^2 +( K +\mu_2)\int_\Omega v^2\\
&\leq  \frac{d_2}{2}\int_\Omega |\nabla v|^2+\frac{\xi^2}{2d_2}\left(\int_\Omega v^4\right)^{\frac{1}{2}} \left(\int_\Omega|\nabla u|^4\right)^{\frac{1}{2}}+( K +\mu_2)\int_\Omega v^2,
\end{split}
\end{equation*}
which gives
\begin{eqnarray}\label{eq2-14}
\frac{d}{dt}\int_\Omega v^2 +d_2 \int_\Omega |\nabla v|^2\leq \frac{\xi^2}{d_2}\left(\int_\Omega v^4\right)^{\frac{1}{2}} \left(\int_\Omega|\nabla u|^4\right)^{\frac{1}{2}}+2( K +\mu_2)\int_\Omega v^2.
\end{eqnarray}
Using the Gagliardo-Nirenberg inequality in two dimensions, we can find two positive constants $C_1$ and $C_2$  such that
 \begin{eqnarray}\label{eq2-15}
 \|v\|_{L^4}^2\leq C_1(\|\nabla v\|_{L^2}\|v\|_{L^2}+\|v\|_{L^2}^2),	
 \end{eqnarray}
and
 \begin{eqnarray}\label{eq2-16}
 	\|\nabla u\|_{L^4}^2 \leq C_2 (\|\Delta u\|_{L^2}\|\nabla u\|_{L^2}+\|\nabla u\|_{L^2}^2),
 \end{eqnarray}
 where the boundary condition $\frac{\partial u}{\partial \nu}|_{\partial \Omega}=0$ has been used to obtain \eqref{eq2-16} (see \cite[Lemma 2.5]{JKW-SIAP-2018}).
Then using the fact $\|\nabla u\|_{L^2}\leq K_4$ (see Lemma \ref{L2}), \eqref{eq2-15} and \eqref{eq2-16}, one has
 \begin{equation}\label{eq2-17}
 \begin{split}
 &\frac{\xi^2}{d_2}\left(\int_\Omega v^4\right)^{\frac{1}{2}} \left(\int_\Omega|\nabla u|^4\right)^{\frac{1}{2}}\\
 &\leq \frac{\xi^2C_1C_2}{d_2}(\|\nabla v\|_{L^2}\|v\|_{L^2}+\|v\|_{L^2}^2)(\|\Delta u\|_{L^2}\|\nabla u\|_{L^2}+\|\nabla u\|_{L^2}^2)\\
 &\leq \frac{\xi^2 C_1C_2K_4}{d_2} \|\nabla v\|_{L^2}\|v\|_{L^2}\|\Delta u\|_{L^2}+\frac{\xi^2 C_1C_2K_4^2}{d_2}\|\nabla v\|_{L^2}\|\|v\|_{L^2}\\
 &\ \ \ \ +\frac{\xi^2 C_1C_2K_4}{d_2}\|v\|_{L^2}^2\|\Delta u\|_{L^2}+\frac{\xi^2 C_1C_2K_4^2}{d_2}\|v\|_{L^2}^2 \\
 &\leq  d_2  \|\nabla v\|^2_{L^2}+C_3\|v\|_{L^2}^2\|\Delta u\|_{L^2}^2+C_4\|v\|_{L^2}^2
 \end{split}
\end{equation}
with $C_3=\frac{(\xi^2C_1C_2K_4)^2(1+d_2^2)}{2d_2^3}$ and $C_4=\frac{(\xi^2C_1C_2K_4^2)^2+d_2^2+2d_2^2
\xi^2C_1C_2K_4^2}{2d_2^3}$. Substituting \eqref{eq2-17} into \eqref{eq2-14} and using Young's inequality, we derive for all $t\in (0, T_{max})$ that
\begin{eqnarray}\label{eq2-18}
\frac{d}{dt}\|v\|_{L^2}^2\leq C_3\|v\|_{L^2}^2\|\Delta u\|	_{L^2}^2+\left[C_4+2( K +\mu_2)\right]\|v\|_{L^2}^2.
\end{eqnarray}
 By virtue of \eqref{L1-2}, there exits a constant $t_0 \in
 [(t-\tau)_+, t)$ for any $t\in (0, T_{max})$ such that
 \begin{eqnarray}\label{eq2-19}
 \|v(\cdot, t_0)\|_{L^2}^2 \leq C_5:= \max\left\{\|v_0\|_{L^2}^2, \frac{4K_1}{\mu_2\tau}\right\}
 \end{eqnarray}
in both cases $t \in (0, \tau)$ and $t \geq \tau,$ where $\tau$ is defined in \eqref{DTT}. We can also derive from  \eqref{L2u-2} that
 \begin{eqnarray}\label{eq2-20}
 	\int_{t_0}^{t_0+\tau}\int_\Omega |\Delta u(\cdot, s)|^2\leq K_5.
 \end{eqnarray}
Noticing the fact $t_0<t\leq t_0+\tau \leq t_0+1,$ we integrate \eqref{eq2-18} over $(t_0, t)$ alongside \eqref{eq2-19}-\eqref{eq2-20} and get
\begin{equation*}
 	\|v(\cdot, t)\|_{L^2}^2
 \leq \|v(\cdot, t_0)\|_{L^2}^2e^{C_3\int_{t_0}^{t}\|\Delta u(\cdot, s)\|_{L^2}^2ds+C_4+2(K+\mu_2)}
 \leq C_5 e^{C_3 K_5+C_4+2(K+\mu_2)}
 \end{equation*}
 for all $t \in (0, T_{max}),$ which gives \eqref{L3-1} by defining
 \begin{equation}\label{K6}
 K_6:=C_5^\frac{1}{2} e^\frac{C_3 K_5+C_4+2(K+\mu_2)}{2}.
 \end{equation}
 This completes the proof.
\end{proof}
\begin{lemma}\label{Le3.3}
Let the assumptions in Theorem \ref{GB} hold and $(u, v, w)$ be the solution of the system  \eqref{pd-2}.  Then there exists a positive constant $K_7$ independent of $\chi$ such that
\begin{equation}\label{LIV*-1}
\|\nabla u(\cdot,t)\|_{L^4}\leq K_7\,\,\mathrm{\ for \ all}\ t\in (0, {T}_{max}).
\end{equation}
\end{lemma}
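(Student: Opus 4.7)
The plan is to use the variation-of-constants formula for the first equation of \eqref{pd-2} together with the Neumann heat semigroup estimates in Lemma \ref{heatsemigroup}. Writing
\begin{equation*}
u(\cdot,t)=e^{td_1\Delta}u_0+\int_0^t e^{(t-s)d_1\Delta}F(\cdot,s)\,ds,\qquad F:=\mu_1u(1-u)-b_1uv-b_3\tfrac{uw}{u+w},
\end{equation*}
and applying $\nabla$, I would estimate the two resulting terms separately in $L^4(\Omega)$.

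First, I would bound the initial layer using \eqref{Lp-1} (or directly \eqref{Lp-2}): since $u_0\in W^{2,\infty}(\Omega)\subset W^{1,4}(\Omega)$, one has $\|\nabla e^{td_1\Delta}u_0\|_{L^4}\le \gamma_1 e^{-d_1\lambda_1 t}\|\nabla u_0\|_{L^4}$, which is uniformly bounded in $t>0$ and independent of $\chi$. For the convolution term I would use \eqref{Lp-2} with $p=4$, $q=2$, $n=2$, which gives the singularity exponent $-\tfrac12-\tfrac{n}{2}(\tfrac1q-\tfrac1p)=-\tfrac34$, hence
\begin{equation*}
\bigl\|\nabla e^{(t-s)d_1\Delta}F(\cdot,s)\bigr\|_{L^4}\le \gamma_2\bigl(1+(t-s)^{-3/4}\bigr)e^{-d_1\lambda_1(t-s)}\|F(\cdot,s)\|_{L^2}.
\end{equation*}

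Next I would control $\|F(\cdot,s)\|_{L^2}$. Using $0<u\le K$ from Lemma \ref{UW} and $0\le \tfrac{w}{u+w}\le 1$, one gets the pointwise bound
\begin{equation*}
|F|\le \mu_1K(1+K)+b_1Kv+b_3K,
\end{equation*}
so that $\|F(\cdot,s)\|_{L^2}\le C_1+b_1K\|v(\cdot,s)\|_{L^2}\le C_1+b_1K K_6$ by Lemma \ref{L3}; crucially this constant depends only on $\xi$ (through $K_6$) but not on $\chi$.

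Putting these together yields
\begin{equation*}
\|\nabla u(\cdot,t)\|_{L^4}\le \gamma_1 e^{-d_1\lambda_1 t}\|\nabla u_0\|_{L^4}+\gamma_2(C_1+b_1KK_6)\int_0^t\bigl(1+(t-s)^{-3/4}\bigr)e^{-d_1\lambda_1(t-s)}\,ds,
\end{equation*}
and the last integral is majorised by $\int_0^\infty(1+\sigma^{-3/4})e^{-d_1\lambda_1\sigma}\,d\sigma<\infty$ since the exponent $-3/4>-1$ is integrable at $0$ and the exponential provides decay at infinity. This produces a constant $K_7$ that is uniform in $t\in(0,T_{\max})$ and independent of $\chi$, as required.

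The only mildly delicate point is the choice of exponents: one needs $q$ large enough (so $\|F\|_{L^q}$ is controlled by the $L^2$ bound on $v$ that is already available without using $\chi$) yet small enough that the singularity $-\tfrac12-\tfrac{n}{2}(\tfrac1q-\tfrac1p)$ in \eqref{Lp-2} remains above $-1$; with $n=2$, $p=4$, $q=2$ both constraints are satisfied and the estimate closes cleanly.
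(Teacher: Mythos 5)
Your proof is correct and takes essentially the same approach as the paper: a Duhamel representation for $u$, the $L^{p}$--$L^{q}$ Neumann semigroup estimates \eqref{Lp-1}--\eqref{Lp-2} with $(p,q,n)=(4,2,2)$, and the bound $\|F\|_{L^2}\lesssim 1+b_1K\|v\|_{L^2}$ from Lemmas \ref{UW} and \ref{L3}. The only cosmetic difference is that the paper first rewrites the equation as $u_t-d_1\Delta u+u=F$ (absorbing $+u$ into $F$) and uses the semigroup $e^{(d_1\Delta-1)t}$; since the gradient estimates already carry the factor $e^{-d_1\lambda_1 t}$, this shift is unnecessary for this lemma and your direct use of $e^{td_1\Delta}$ closes identically.
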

\begin{proof}
To begin with, we rewrite the first equation of \eqref{pd-2} as follows:
\begin{equation}\label{LIV-2}
u_t-d_1\Delta u+u=F(x,t)
\end{equation}
with
$F(x,t):=\mu_1u(1-u)-b_1 uv-b_3\frac{uw}{u+w}+u.$ Then by \eqref{Loow} and \eqref{L3-1}, for all $t\in(0,T_{max})$, one has
\begin{equation}\label{F2}
\begin{split}
\|F(\cdot,t)\|_{L^2}
&= \big\|\mu_1u(1-u)-b_1 uv-b_3\frac{uw}{u+w}+ u\big\|_{L^2}\\
&\leq \mu_1 K(1+K)|\Omega|^\frac{1}{2}+b_1K\|v\|_{L^2}+b_3K|\Omega|^\frac{1}{2}+K|\Omega|^\frac{1}{2}\\
&\leq [\mu_1(1+K)+b_3+1]K|\Omega|^\frac{1}{2}+b_1KK_6.
\end{split}
\end{equation}
We  apply the variation-of-constants formula to \eqref{LIV-2} and obtain
\begin{equation}\label{vq-1}
u(\cdot,t)=e^{(d_1\Delta -1)t}u_0+\int_0^t
e^{(d_1\Delta-1)(t-s)}F(\cdot, s)ds.
\end{equation}
Then using the  estimates \eqref{Lp-1} and \eqref{Lp-2}, one can derive from \eqref{vq-1} that
\begin{equation*}
\begin{split}
\|\nabla u(\cdot,t)\|_{L^4}
&\leq \|\nabla e^{(d_1\Delta -1)t}u_0 \|_{L^4}+\int_0^{t}\|\nabla e^{(d_1\Delta-1)(t-s)}F(\cdot,s)\|_{L^4}ds\\
&\leq e^{-t}\|\nabla e^{t d_1\Delta }u_0\|_{L^4}+\int_0^t\|\nabla e^{(t-s)d_1\Delta} F(\cdot,s)\|_{L^4}ds\\
&\leq 2 \gamma_1e^{-d_1\lambda_1t}\|\nabla u_0\|_{L^4}
+\gamma_2\int_0^t
\left(1+(t-s)^{-\frac{3}{4}}\right)
e^{-d_1\lambda_1(t-s)}\|F(\cdot,s)\|_{L^2}ds\\
&\leq 2 \gamma_1\|\nabla u_0\|_{L^4}+{\gamma_2} \left( [\mu_1(1+K)+b_3+1]K|\Omega|^\frac{1}{2}+b_1KK_6\right)\int_0^{\infty}\left(1+z^{-\frac{3}{4}}\right)
e^{-d_1\lambda_1z}dz\\
&\leq 2\gamma_1\|\nabla u_0\|_{L^4}+\frac{\gamma_{2}}{d_1\lambda_1}\left( [\mu_1(1+K)+b_3+1]K|\Omega|^\frac{1}{2}+b_1KK_6\right)\left(1+\Gamma(1/4)(d_1\lambda_{1})^{\frac{3}{4}}\right),
\end{split}
\end{equation*}
where $\Gamma$ denotes the usual Gamma function defined by
$
\Gamma(z)=\int_0^\infty t^{z-1}e^{-t}dt.
$
This  gives \eqref{LIV*-1} with
\begin{equation}\label{K7}
K_7:=2\gamma_1\|\nabla u_0\|_{L^4}+\frac{\gamma_{2}}{d_1\lambda_1}\left( [\mu_1(1+K)+b_3+1]K|\Omega|^\frac{1}{2}+b_1KK_6\right)\left(1+\Gamma(1/4)(d_1\lambda_{1})^{\frac{3}{4}}\right),
\end{equation}
and completes the proof of this lemma.
\end{proof}
 \begin{lemma}\label{LIU}
Let the assumptions in Theorem \ref{GB} hold and $(u, v, w)$ be the solution of  \eqref{pd-2}.  Then there exists a positive constant $K_8$ independent of $\chi$ such that
\begin{equation}\label{LIU-1}
\| v(\cdot,t)\|_{L^3}\leq K_8\mathrm{\ for \ all}\ t\in (0, {T}_{max}).
\end{equation}
\end{lemma}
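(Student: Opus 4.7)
The plan is to follow the template of Lemma \ref{L3}, deriving a Gr\"onwall-type differential inequality of the form
\[
y'(t) \leq \bigl[C_1 + C_2\|\Delta u(t)\|_{L^2}^2\bigr]\,y(t), \qquad y(t):=\int_\Omega v^3,
\]
and then closing via the local-in-time estimate $\int_t^{t+\tau}\|\Delta u\|_{L^2}^2\leq K_5$ from \eqref{L2u-2} together with a judicious choice of initial time in the same spirit as \eqref{eq2-19}.

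First I would test the second equation of \eqref{pd-2} by $3v^2$, integrate by parts twice (once for the diffusion and once to send the taxis cross term into $-2\xi\int v^3\Delta u$), and discard the nonnegative terms $3\mu_2\int v^4$ and $3b_2\int v^3 w$ on the left, obtaining
\[
\frac{d}{dt}\int_\Omega v^3 + 6d_2\int_\Omega v|\nabla v|^2 \leq 2\xi\int_\Omega v^3|\Delta u| + 3(\mu_2+K)\int_\Omega v^3.
\]
The crux is to control $\int v^3|\Delta u|$. Cauchy--Schwarz bounds it by $\|v\|_{L^6}^3\|\Delta u\|_{L^2}$, and since $\|v\|_{L^6}^3 = \|v^{3/2}\|_{L^4}^2$, applying the 2D Gagliardo--Nirenberg inequality to $f=v^{3/2}$ with the subcritical norm $\|v^{3/2}\|_{L^2}^2 = y$ yields
\[
\|v^{3/2}\|_{L^4}^2 \leq C_*\bigl(\|\nabla v^{3/2}\|_{L^2}\,y^{1/2} + y\bigr).
\]
Because $\|\nabla v^{3/2}\|_{L^2}^2 = \tfrac{9}{4}\int v|\nabla v|^2$ is exactly the dissipation already present on the left, Young's inequality $ab\leq \epsilon a^2 + \tfrac{1}{4\epsilon}b^2$ with $a=\|\nabla v^{3/2}\|_{L^2}$ and $b = y^{1/2}\|\Delta u\|_{L^2}$ absorbs the top-order factor into the dissipation and leaves a residual of the form $C(\xi,d_2)\,y\,\|\Delta u\|_{L^2}^2$. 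Combining with $2\xi C_* y\|\Delta u\|_{L^2}\leq \xi C_* y\|\Delta u\|_{L^2}^2 + \xi C_* y$ produces the Gr\"onwall-type inequality displayed above.

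To run the Gr\"onwall step on a time window of length $\tau\leq 1$, I need an initial base point $t_0\in[(t-\tau)_+,t)$ with $y(t_0)$ uniformly bounded. As in Lemma \ref{L3}, the mean value theorem reduces this to bounding $\int_{(t-\tau)_+}^t y(s)\,ds$; the 2D Gagliardo--Nirenberg inequality $\|v\|_{L^3}^3\leq C\|\nabla v\|_{L^2}\|v\|_{L^2}^2 + C\|v\|_{L^2}^3$ combined with $\|v\|_{L^2}\leq K_6$ turns this into a question of controlling $\int_t^{t+\tau}\|\nabla v\|_{L^2}^2\,ds$, which in turn is extractable by revisiting the $\|v\|_{L^2}$ computation of Lemma \ref{L3} --- keeping the dissipation $d_2\int|\nabla v|^2$ on the left and absorbing the cross term $\xi\int v\nabla u\cdot\nabla v$ via the Gagliardo--Nirenberg inequality \eqref{eq2-16} and $\|\nabla u\|_{L^4}\leq K_7$. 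The main technical obstacle is the exponent balance in Young's inequality: the top-order factor $\|\nabla v^{3/2}\|_{L^2}$ must be absorbed completely while the residual on $\|\Delta u\|_{L^2}$ must be exactly quadratic, since any alternative pairing producing $\|\Delta u\|_{L^2}^p$ with $p>2$ could not be controlled by $\int_t^{t+\tau}\|\Delta u\|_{L^2}^2\leq K_5$. Once this balance is struck, Gr\"onwall on an interval of length at most $\tau\leq 1$ delivers $y(t)\leq y(t_0)\exp(C_1\tau+C_2 K_5)$, i.e.\ \eqref{LIU-1}, with $K_8$ depending on $\xi$, $K_6$, $K_7$, $d_2$ and the other parameters but independent of $\chi$, since every input used ($\|u\|_{L^\infty}$, $\|v\|_{L^2}$, $\|\nabla u\|_{L^4}$, and the local $\|\Delta u\|_{L^2}^2$-integral) is itself independent of $\chi$.
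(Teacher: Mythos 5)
Your argument is correct, but it takes a genuinely different route from the paper's. The paper tests the $v$-equation by $v^2$ and, after Young's inequality, is faced with $\frac{\xi^2}{d_2}\int v^3|\nabla u|^2$; it then invokes $\|\nabla u\|_{L^4}\leq K_7$ from Lemma \ref{Le3.3} to reduce this to $\frac{K_7^2\xi^2}{d_2}\|v\|_{L^6}^3=\frac{K_7^2\xi^2}{d_2}\|v^{3/2}\|_{L^4}^2$, and applies the Gagliardo--Nirenberg inequality against the \emph{subcritical} norm $\|v^{3/2}\|_{L^{4/3}}=\|v\|_{L^2}^{3/2}\leq K_6^{3/2}$, so that after Young the cross term is absorbed into the dissipation and what remains is an \emph{absolute constant}. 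Adding $\|v\|_{L^3}^3$ to both sides and using the $\int v^4$ sink yields the absorbing ODI $\frac{d}{dt}\|v\|_{L^3}^3+\|v\|_{L^3}^3\leq C$, and a single Gr\"onwall step finishes, with no need for a time-windowed base point. You instead push the taxis term onto $\Delta u$, use Cauchy--Schwarz to $\|\Delta u\|_{L^2}$, and apply Gagliardo--Nirenberg against the \emph{critical} norm $\|v^{3/2}\|_{L^2}=y^{1/2}$, which is the quantity you are trying to control; the price is a Gr\"onwall coefficient $\|\Delta u\|_{L^2}^2$ and the need for a base-point selection via \eqref{L2u-2} together with an auxiliary bound on $\int_t^{t+\tau}\|\nabla v\|_{L^2}^2$ that is not stated in the paper but is indeed extractable by re-running the estimates of Lemma \ref{L3} with a smaller Young parameter so that part of the dissipation survives on the left. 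Both routes are valid and both ultimately rest on the same chain $\|v\|_{L^2}\leq K_6$, $\|\nabla u\|_{L^4}\leq K_7$, and the local-in-time $\|\Delta u\|_{L^2}^2$ integrability; the paper's choice of the $L^{4/3}$ interpolation norm is what collapses the argument to a one-line Gr\"onwall, whereas your choice of the $L^2$ norm trades that simplicity for a more elaborate (but perfectly legitimate) windowed Gr\"onwall argument in the style of Lemma \ref{L3}.
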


\begin{proof}
Multiplying the second equation of  \eqref{pd-2} by
$v^2$ and integrating the result over $\Omega$ along with \eqref{Loow} and \eqref{LIV*-1}, we get
\begin{equation*}
\begin{split}
&\frac{1}{3}\frac{d}{dt}\int_\Omega v^3+2d_2\int_\Omega v |\nabla v|^2+\mu_2\int_\Omega v^4 \\
&=2\xi \int_\Omega v^2 \nabla u\cdot \nabla v+\mu_2\int_\Omega v^3+\int_\Omega uv^3 -b_2\int_\Omega  v^3 w\\
&\leq  d_2 \int_\Omega v |\nabla v|^2+\frac{\xi^2}{d_2} \int_\Omega v^3 |\nabla u|^2+(\mu_2+K)\int_\Omega v^3\\
&\leq  d_2 \int_\Omega v |\nabla v|^2+\frac{\xi^2}{d_2} \left(\int_\Omega v^6\right)^{\frac{1}{2}} \left(\int_\Omega|\nabla u|^4\right)^{\frac{1}{2}}+(\mu_2+K)\int_\Omega v^3\\
&\leq d_2 \int_\Omega v |\nabla v|^2+\frac{K_7^2\xi^2}{d_2}\|v\|_{L^6}^3+(\mu_2+K)\|v\|_{L^3}^3,
\end{split}
\end{equation*}
which implies
\begin{equation}\label{eq2-23}
\begin{split}
&\frac{d}{dt}\|v\|_{L^3}^3+\frac{4d_2}{3}\|\nabla v^\frac{3}{2}\|_{L^2}^2+3\mu_2\|v\|_{L^4}^4 \leq \frac{3K_7^2\xi^2}{d_2}\|v\|_{L^6}^3+3(\mu_2+K)\|v\|_{L^3}^3.
\end{split}
\end{equation}
From \eqref{L3-1}, it follows  that $\|v^\frac{3}{2}(\cdot,t)\|_{L^\frac{4}{3}}=\|v(\cdot,t)\|_{L^2}^\frac{3}{2}\leq K_6^\frac{3}{2}$. Then applying the Gagliardo-Nirenberg inequality and Young's inequality, we can derive that
\begin{eqnarray}\label{eq2-24}
\begin{split}
\frac{3K_7^2\xi^2}{d_2}\|v\|_{L^6}^3	
&=  \frac{3K_7^2\xi^2}{d_2} \|v^{\frac{3}{2}}\|^2_{L^4}\\
&\leq  \frac{3K_7^2\xi^2C_1}{d_2} \left(\|\nabla v^{\frac{3}{2}}\|_{L^2}^{\frac{4}{3}}
\|v^{\frac{3}{2}}\|_{L^\frac{4}{3}}^{\frac{2}{3}}+\|v^{\frac{3}{2}}\|_{L^\frac{4}{3}}^2\right)\\
&\leq  \frac{3K_7^2\xi^2C_1K_6}{d_2} \|\nabla v^{\frac{3}{2}}\|_{L^2}^{\frac{4}{3}}
+\frac{3K_7^2\xi^2C_1K_6^3}{d_2}\\
&\leq \frac{4d_2}{3}\|\nabla v^{\frac{3}{2}}\|_{L^2}^{2}+C_{2},
\end{split}
\end{eqnarray}
 where $C_{2}=\frac{9(K_7^2\xi^2C_1K_6)^3}{4d_2^5}+\frac{3K_7^2\xi^2C_1K_6^3}{d_2}$.
 On the other hand, using  Young's inequality, one has
 \begin{equation}\label{eq2-25}
 [3(\mu_2+K)+1]\|v\|_{L^3}^3\leq 3\mu_2\|v\|_{L^4}^4+C_3,
 \end{equation}
 where $C_3=4^{-4}\mu_2^{-3}[3(\mu_2+K)+1]^4|\Omega|$. Then  adding $\|v\|_{L^3}^3$ on both sides of \eqref{eq2-23} and  substituting \eqref{eq2-24}-\eqref{eq2-25} into
the resulting inequality, we obtain
 $$
\frac{d}{dt}\|v\|_{L^3}^3+\|v\|_{L^3}^3\leq C_2+C_3,
$$
which immediately  gives \eqref{LIU-1}
with
$
	K_8:=\left(\|v_0\|_{L^3}^3+C_2+C_3\right)^\frac{1}{3}.
$
This completes the proof of Lemma \ref{LIU}.
\end{proof}

\begin{lemma}\label{LIV}
Let the assumptions in Theorem \ref{GB} hold and $(u, v, w)$ be the solution of system  \eqref{pd-2}.  Then there exists a positive constant $K_9$ independent of $\chi$ such that
\begin{equation}\label{LIV-1}
\|v(\cdot,t)\|_{L^\infty}\leq K_9\,\,\mathrm{\ for \ all}\ t\in (0, {T}_{max}).
\end{equation}
\end{lemma}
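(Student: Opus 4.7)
The plan is to obtain $\|v(\cdot,t)\|_{L^\infty}\le K_9$ via a two–step semigroup bootstrap, leveraging the $L^3$–bound $\|v\|_{L^3}\le K_8$ (Lemma~\ref{LIU}) and the $L^4$–gradient bound $\|\nabla u\|_{L^4}\le K_7$ (Lemma~\ref{Le3.3}), together with the Neumann heat semigroup smoothing estimates of Lemma~\ref{heatsemigroup}.

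First, rewrite the $v$–equation in the convenient form
\begin{equation*}
v_t-(d_2\Delta-1)v=-\xi\,\nabla\!\cdot\!(v\nabla u)+v\bigl(\mu_2+1+u-\mu_2 v-b_2 w\bigr).
\end{equation*}
Since $v,w\ge 0$ and the Neumann heat semigroup is positivity-preserving, Duhamel's formula together with the nonpositivity of $-\mu_2 v^2-b_2 vw$ yields the pointwise upper bound
\begin{equation*}
0\le v(t)\le e^{(d_2\Delta-1)t}v_0+\int_0^{t}e^{(d_2\Delta-1)(t-s)}\bigl[(\mu_2+1+u)v\bigr](s)\,ds-\xi\int_0^{t}e^{(d_2\Delta-1)(t-s)}\nabla\!\cdot\!(v\nabla u)(s)\,ds.
\end{equation*}
The right-hand side involves neither $w$ nor $\chi$; this is exactly the structural feature that will make $K_9$ independent of $\chi$.

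For the first step, fix $p_1\in(4,12)$ (for concreteness $p_1=11$) and estimate the three terms in $L^{p_1}$. The initial-value contribution is controlled by $\|v_0\|_{L^\infty}|\Omega|^{1/p_1}$. For the source term, Lemma~\ref{heatsemigroup}(iii) together with $\|u\|_{L^\infty}\le K$ (Lemma~\ref{UW}) and $\|v\|_{L^3}\le K_8$ gives the integrable kernel $(1+\tau^{-(1/3-1/p_1)})e^{-\tau}$ on $(0,\infty)$. For the taxis term, H\"older yields $\|v\nabla u\|_{L^{12/7}}\le\|v\|_{L^3}\|\nabla u\|_{L^4}\le K_7K_8$, and Lemma~\ref{heatsemigroup}(iv) with $q=12/7$, $p=p_1$ produces the kernel $(1+\tau^{-(1/2+7/12-1/p_1)})e^{-(d_2\lambda_1+1)\tau}$; the choice $p_1<12$ keeps the singularity exponent strictly less than $1$, guaranteeing integrability. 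We thus obtain a uniform bound $\|v(\cdot,t)\|_{L^{p_1}}\le C$.

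In the second step, repeat the argument in $L^\infty$. H\"older now gives $\|v\nabla u\|_{L^q}\le\|v\|_{L^{p_1}}\|\nabla u\|_{L^4}$ with $\tfrac1q=\tfrac1{p_1}+\tfrac14$; since $p_1>4$, one has $q>2$, so in Lemma~\ref{heatsemigroup}(iv) with $p=\infty$ the singularity exponent $\tfrac12+\tfrac1q$ is strictly less than $1$ and, combined with the exponential factor, gives an integrable kernel on $(0,\infty)$. Similarly, Lemma~\ref{heatsemigroup}(iii) with $q=p_1$, $p=\infty$ controls the source term using $\|v\|_{L^{p_1}}\le C$. Summing the three contributions produces the desired $\chi$-independent bound $\|v(\cdot,t)\|_{L^\infty}\le K_9$.

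The principal obstacle is the treatment of the sign-indefinite reaction $v(\mu_2+1+u-\mu_2 v-b_2 w)$ without invoking any a priori information on $w$: any $L^p$ control on $w$ currently available would depend on $\chi$, so we cannot afford to retain $-b_2 vw$ in the source. The positivity of the Neumann semigroup is what permits us to discard $-\mu_2 v^2-b_2 vw$ via pointwise comparison, and verifying this comparison carefully is the most delicate point; once it is in place, matching the exponents in Lemma~\ref{heatsemigroup} across the two bootstrap steps is routine.
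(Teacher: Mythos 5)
Your proof is correct but follows a genuinely different bootstrap structure from the paper's. The paper first upgrades the $u$-regularity: inside the proof of this lemma it uses $\|v\|_{L^3}\le K_8$ together with the Duhamel representation of the $u$-equation to establish $\|\nabla u(\cdot,t)\|_{L^\infty}\le K_8^*$ (estimate \eqref{LIU*}), and only then attacks the $v$-equation, bounding $\|v\nabla u\|_{L^3}\le\|v\|_{L^3}\|\nabla u\|_{L^\infty}$ so that a single application of the semigroup estimate \eqref{Lp-4} with $q=3$, $p=\infty$ yields the integrable exponent $\tfrac12+\tfrac13=\tfrac56<1$. You avoid the $L^\infty$-gradient step entirely, working only with $\|\nabla u\|_{L^4}\le K_7$ from Lemma~\ref{Le3.3} and bootstrapping $v$ twice ($L^3\to L^{p_1}\to L^\infty$ with $p_1\in(4,12)$); your exponent bookkeeping ($\tfrac12+\tfrac7{12}-\tfrac1{p_1}<1$ and $\tfrac12+\tfrac1{p_1}+\tfrac14<1$) is correct. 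Both proofs handle the reaction in the same way, discarding $-\mu_2 v^2-b_2vw$ via positivity of the Neumann semigroup so that nothing involving $w$ or $\chi$ survives, and both therefore give a $\chi$-independent $K_9$. The paper's route has the practical advantage that $\|\nabla u\|_{L^\infty}\le K_8^*$ is reused heavily in the subsequent lemmas (for $\|\nabla v\|_{L^2}$, $\|\nabla v\|_{L^4}$, $\|w\|_{L^3}$, $\|\nabla v\|_{L^\infty}$), so establishing it here is not wasted effort; your route is more self-contained at the level of this one lemma, but would require a separate $L^\infty$-gradient estimate downstream.
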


\begin{proof}
Using Lemma \ref{LIU} and the fact $0<u\leq K$ (see Lemma \ref{UW}), one has
\begin{equation}\label{F3}
\begin{split}
\|F(\cdot,t)\|_{L^3}
&= \big\|\mu_1u(1-u)-b_1 uv-b_3\frac{uw}{u+w}+ u\big\|_{L^3}\\
&\leq \mu_1 K(1+K)|\Omega|^\frac{1}{3}+b_1K\|v\|_{L^3}+b_3K|\Omega|^\frac{1}{3}+K|\Omega|^\frac{1}{3}\\
&\leq [\mu_1(1+K)+b_3+1]K|\Omega|^\frac{1}{3}+b_1KK_8.
\end{split}
\end{equation}
Taking $\nabla$ on both sides of \eqref{vq-1} and using  $L^p$-$L^q$ estimates \eqref{Lp-2}, we have from \eqref{F3} that
\begin{equation*}
\begin{split}
\|\nabla u(\cdot,t)\|_{L^\infty}
&\leq \|\nabla e^{(d_1\Delta -1)t}u_0 \|_{L^\infty}+\int_0^{t}\|\nabla e^{(d_1\Delta-1)(t-s)}F(\cdot,s)\|_{L^\infty}ds\\
&\leq c_1\| u_0\|_{W^{1,\infty}}+\int_0^{t}\|\nabla e^{(t-s)d_1\Delta} F(\cdot,s)\|_{L^\infty}ds\\
&\leq c_1\| u_0\|_{W^{1,\infty}}
+\gamma_2\int_0^{t}
\left(1+(t-s)^{-\frac{5}{6}}\right)
e^{-d_1\lambda_1(t-s)}\|F(\cdot,s)\|_{L^3}ds\\
&\leq c_1\| u_0\|_{W^{1,\infty}}+{\gamma_2}\left([\mu_1(1+K)+b_3+1]K|\Omega|^\frac{1}{3}+b_1KK_8\right) \int_0^{\infty}\left(1+z^{-\frac{5}{6}}\right)
e^{-d_1\lambda_1z}dz\\
&\leq c_1\| u_0\|_{W^{1,\infty}}+\frac{\gamma_{2}}{d_1\lambda_1}\left( [\mu_1(1+K)+b_3+1]K|\Omega|^\frac{1}{3}+b_1KK_8\right)\left(1+(d_1\lambda_{1})^{\frac{5}{6}}\Gamma(1/6)\right),
\end{split}
\end{equation*}
which gives
\begin{equation}\label{LIU*}
\|\nabla u(\cdot,t)\|_{L^\infty}\leq K_8^*
\end{equation}
with
\begin{equation*}\label{K8}
K_8^*:=c_1\| u_0\|_{W^{1,\infty}}+\frac{\gamma_{2}}{d_1\lambda_1}\left([\mu_1(1+K)+b_3+1]K|\Omega|^\frac{1}{3}+b_1KK_8\right)\left(1+(d_1\lambda_{1})^{\frac{5}{6}}\Gamma(1/6)\right).
\end{equation*}
We rewrite the second equation of \eqref{pd-2} as follows:
\begin{equation}\label{eq3.30}
v_t-d_2\Delta v+v=-\xi\nabla\cdot(v\nabla u)+\mu_2v(1-v)+uv-b_2 vw+v.
\end{equation}
Then   applying the variation-of-constants formula to \eqref{eq3.30},  one has
\begin{equation*}
\begin{split}
 v(\cdot,t)
& =e^{(d_2\Delta -1)t} v_0- \xi \int_0^{t} e^{(d_2\Delta -1)(t-s)}\nabla\cdot(v \nabla u)(\cdot, s)ds\\
 &+ \int_0^{t} e^{(d_2\Delta -1)(t-s)}[v(\mu_2+1-\mu_2v+u-b_2w)](\cdot, s)ds\\
&\leq e^{(d_2\Delta -1)t} v_0- {\xi}\int_0^{t} e^{(d_2\Delta -1)(t-s)}\nabla\cdot(v \nabla u)(\cdot, s)ds\\
&+ \int_0^{t} e^{(d_2\Delta -1)(t-s)}[v(\mu_2+1+u)](\cdot, s)ds,\\
 \end{split}
\end{equation*}
which implies
\begin{equation}\label{LIV-3}
\begin{split}
 \|v(\cdot, t)\|_{L^\infty}
\leq& \|e^{(d_2\Delta -1)t} v_0\|_{L^\infty}+ {\xi}\int_0^{t} \|e^{(d_2\Delta -1)(t-s)}\nabla\cdot(v \nabla u)(\cdot, s)\|_{L^\infty}ds\\
&+ \int_0^{t} \|e^{(d_2\Delta -1)(t-s)}[v(\mu_2+1+u)](\cdot, s)\|_{L^\infty}ds.\\
 \end{split}
\end{equation}
Using \eqref{Lp-3} and  \eqref{LIU-1}, we have
\begin{equation}\label{LIV-4}
\|e^{(d_2\Delta -1)t} v_0\|_{L^\infty}\leq 2\gamma_3 \|v_0\|_{L^\infty},
\end{equation}
and
\begin{equation}\label{LIV-5}
\begin{split}
& \int_0^{t} \|e^{(d_2\Delta -1)(t-s)}[v(\mu_2+1+u)](\cdot, s)\|_{L^\infty}ds\\
&\leq {(\mu_2+1+K)\gamma_3}\int_0^{t}(1+(t-s))^{-\frac{1}{3}}e^{ -(t-s)}\|v(\cdot, s)\|_{L^3}ds\\
&\leq {(\mu_2+1+K)K_8\gamma_3}\int_0^{+\infty}(1+z^{-\frac{1}{3}})e^{-z}dz\\
&=(\mu_2+1+K)K_8\gamma_3[1+\Gamma\left(2/3\right)].
\end{split}
\end{equation}
On the other hand, using the $L^p$-$L^q$ estimate \eqref{Lp-4}  alongside the fact that $C_0^\infty(\Omega)$ is dense in $L^p(\Omega)$ for any $1\leq p<\infty$,  estimates \eqref{LIU-1} and \eqref{LIU*}, we have
\begin{equation}\label{LIV-6}
\begin{split}
 &{\xi}\int_0^{t} \|e^{(d_2\Delta -1)(t-s)}\nabla\cdot(v \nabla u)(\cdot, s)\|_{L^\infty}ds\\
 &\leq {\xi\gamma_4}\int_0^{t}(1+(t-s)^{-\frac{5}{6}})e^{-d_2\lambda_1 t}\|v(\cdot, s)\|_{L^3}\|\nabla u(\cdot, s)\|_{L^\infty}ds\\
 &\leq {\xi\gamma_4K_8K_8^*}\int_0^{\infty}(1+z^{-\frac{5}{6}})e^{-d_2\lambda_1z}dz\\
 &=\frac{\xi\gamma_4K_8K_8^*}{d_2\lambda_1}[1+\Gamma\left(1/6\right)(d_2\lambda_1)^\frac{5}{6}].
 \end{split}
\end{equation}
Substituting \eqref{LIV-4}, \eqref{LIV-5} and  \eqref{LIV-6} into \eqref{LIV-3}, one has \eqref{LIV-1} with
\begin{equation*}
K_9:=2\gamma_3 \|v_0\|_{L^\infty}+(\mu_2+1+K)K_8\gamma_3[1+\Gamma(2/3)]
+
\frac{\xi\gamma_4K_8K_8^*}{d_2\lambda_1}\left(1+\Gamma\left(1/6\right)(d_2\lambda_1)^\frac{5}{6}\right).
\end{equation*}
\end{proof}

\begin{remark}\label{Re3.6}
\em{
From the definitions of $K_4, K_5$ and $K_7$, we see that $\|v(\cdot,t)\|_{L^\infty}\leq K_9 \leq c_0(1+b_1+b_1^2)$ with some constant $c_0>0$ independent of $b_1$. This result will be used later in the stability analysis in section 4.
}
\end{remark}

\subsection{Boundedness of $\|w(\cdot,t)\|_{L^\infty}$} To obtain uniform-in-time boundedness of $\|w(\cdot,t)\|_{L^\infty}$, we first give some higher order derivative estimates of $u, v$ below.
\begin{lemma}\label{L4}
Let the assumptions in Theorem \ref{GB} hold and $(u, v, w)$ be the solution of  \eqref{pd-2}. Then there exists  a constant $K_{10}>0$
independent of $t$ such that
and
for all $p>1$
\begin{equation}\label{L4-2}
\int_t^{t+\tau}\|D^2 u\|_{L^p}^p\leq K_{10}\,\, \mathrm{for\ all} \ t\in (0,\widetilde{T}_{max})
\end{equation}
and
\begin{equation}\label{L4-2*}
{\color{black}\int_\tau^t e^{-p(t-s)}\|\Delta u\|_{L^p}^p\leq K_{10}\,\,  \mathrm{for\ all} \ t\in (\tau,T_{max})},
\end{equation}
where $\tau$ and $\widetilde{T}_{max}$ are defined in \eqref{DTT}.
\end{lemma}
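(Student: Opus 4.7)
The plan is to combine the uniform $L^\infty$-bound on the semilinear source term in the $u$-equation (afforded by Lemmas \ref{UW} and \ref{LIV}) with the maximal $L^p$-regularity of the Neumann heat semigroup.

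First, I would rewrite the first equation of \eqref{pd-2} in the form
\[
u_t-d_1\Delta u+u=G,\qquad G:=u+\mu_1u(1-u)-b_1uv-b_3\tfrac{uw}{u+w}.
\]
Using the elementary pointwise bound $0\leq\tfrac{uw}{u+w}\leq u$ together with the uniform estimates $u\leq K$ (Lemma \ref{UW}) and $v\leq K_9$ (Lemma \ref{LIV}) yields $\|G(\cdot,s)\|_{L^\infty(\Omega)}\leq M$, and hence $\|G(\cdot,s)\|_{L^p(\Omega)}\leq M|\Omega|^{1/p}$, for every $s\in(0,T_{max})$ and $p>1$. Note that this bound does not involve $w$ directly, which is crucial since we have no $L^\infty$-control on $w$ at this stage.

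Next, denote $A:=-d_1\Delta+I$ on $L^p(\Omega)$ subject to homogeneous Neumann boundary conditions; $A$ is sectorial with $\sigma(A)\subset[1,\infty)$, so $-A$ generates an analytic, exponentially decaying semigroup on $L^p(\Omega)$. Fix $t\in(0,\widetilde{T}_{max})$, set $t_*:=(t-1)_+$, and apply the variation-of-constants formula on $[t_*,t+\tau]$:
\[
u(\cdot,s)=e^{-(s-t_*)A}u(\cdot,t_*)+\int_{t_*}^{s}e^{-(s-\sigma)A}G(\cdot,\sigma)\,d\sigma.
\]
Invoking the Hieber--Pr\"uss maximal $L^p$-regularity for the Duhamel term and the analytic-smoothing estimate $\|Ae^{-rA}\|_{L^p\to L^p}\leq Cr^{-1}e^{-r}$ for the initial-data term, together with $\|u(\cdot,t_*)\|_{L^p}\leq K|\Omega|^{1/p}$ and (in the initial layer $t<1$) the given regularity $u_0\in W^{2,\infty}$, delivers
\[
\int_t^{t+\tau}\|Au(s)\|_{L^p}^p\,ds\leq C,
\]
with $C$ independent of $t$. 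Since $Au=-d_1\Delta u+u$ and the Neumann Laplacian satisfies the elliptic regularity bound $\|D^2\varphi\|_{L^p}\leq C(\|\Delta\varphi\|_{L^p}+\|\varphi\|_{L^p})$ for $\varphi\in W^{2,p}(\Omega)$ with $\partial_\nu\varphi|_{\partial\Omega}=0$, estimate \eqref{L4-2} follows after absorbing the lower-order terms using the uniform $L^\infty$-bounds on $u$ (Lemma \ref{UW}) and on $\nabla u$ (from the proof of Lemma \ref{LIV}). Finally, once \eqref{L4-2} is established, the weighted estimate \eqref{L4-2*} follows by decomposing $[\tau,t]$ into intervals of length $\tau$ and summing the resulting geometric series:
\[
\int_\tau^{t}e^{-p(t-s)}\|\Delta u(s)\|_{L^p}^p\,ds\leq\sum_{k=0}^{\infty}e^{-pk\tau}\int_{(t-(k+1)\tau)_+}^{t-k\tau}\|\Delta u\|_{L^p}^p\,ds\leq\frac{K_{10}}{1-e^{-p\tau}},
\]
after relabelling the constant.

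The main technical obstacle I anticipate is the need for a uniform-in-$t$ control on $\|u(\cdot,t)\|_{W^{2,p}}$ when invoking maximal regularity, since our earlier estimates furnish only $W^{1,\infty}$-control on $u$ and in general the initial datum for maximal regularity must lie in the trace space $(L^p,D(A))_{1-1/p,p}$. The shift of the starting time from $t$ to $t_*=(t-1)_+$, which brings the singularity $r^{-1}$ of $\|Ae^{-rA}\|$ safely away from the window $[t,t+\tau]$, circumvents this obstruction without requiring any additional \emph{a priori} information on $u(\cdot,t)$ beyond what we already have.
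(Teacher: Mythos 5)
Your proposal is correct and follows essentially the same route as the paper: rewrite the first equation as $u_t-d_1\Delta u+u=F$ with $F$ uniformly bounded in $L^\infty$ (thanks to $\|u\|_{L^\infty}\le K$, $\|v\|_{L^\infty}\le K_9$, and $0\le\frac{uw}{u+w}\le u$), and then invoke maximal $L^p$-regularity of the Neumann heat semigroup. The paper simply cites Jin \cite[Lemma 2.3]{JCH-2020-Nonlinearity} for \eqref{L4-2} and Cao \cite[Lemma 2.5]{Cao-ZAMP-2016} for the weighted estimate \eqref{L4-2*}, whereas you reconstruct the maximal-regularity argument inline (including the time-shift $t_*=(t-1)_+$ to sidestep the trace-space requirement on the initial datum). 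The one genuinely different ingredient is your derivation of \eqref{L4-2*}: rather than a direct weighted maximal-regularity result as in Cao's lemma, you obtain it as a corollary of \eqref{L4-2} via the geometric-series decomposition of $[\tau,t]$ into intervals of length $\tau$, using $|\Delta u|\le\sqrt{2}\,|D^2u|$ to pass between the two quantities; this is more elementary and avoids the need for a second external lemma, at the cost of an extra factor $2^{p/2}/(1-e^{-p\tau})$ that must be absorbed into the constant. Both routes are valid; your write-up would benefit from stating explicitly the pointwise inequality $|\Delta u|\le\sqrt{2}\,|D^2u|$ that links the two norms, and from noting that only the $L^\infty$-bounds on $u$ and $v$ (not on $\nabla u$) are actually needed for the lower-order absorption.
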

\begin{proof}
We rewrite  the first equation of \eqref{pd-2} as follows
\begin{equation*}
\begin{cases}
u_t-d_1\Delta u+u=F(x,t),&x\in\Omega,t>0,\\
\frac{\partial u}{\partial\nu}=0, &x\in\partial\Omega,t>0,\\
u(x,0)=u_0(x),&x\in\Omega,
\end{cases}
\end{equation*}
where
\begin{equation*}
F(x,t):=\mu_1u(1-u)+u-b_1 uv-\frac{b_3 uw}{u+w}.
\end{equation*}
Then by the boundedness of $|F(x,t)|$(due to the boundedness of $u$ and $v$, see \eqref{Loow} and \eqref{LIV-1}), we can obtain \eqref{L4-2} by  \cite[Lemma 2.3]{JCH-2020-Nonlinearity}. Moreover \eqref{L4-2*} follows from the maximal Sobolev regularity
property, see \cite[Lemma 2.5]{Cao-ZAMP-2016}.
\end{proof}
Next, we shall show that
\begin{lemma}\label{L2-v}
Let the assumptions in Theorem \ref{GB} hold and $(u, v, w)$ be the solution of  \eqref{pd-2}.
Then there exist two positive constants $K_{11}$ and $K_{12}$ independent of $t$ such that
\begin{equation}\label{L2-v1}
\|\nabla v(\cdot, t)\|_{L^2}\leq K_{11}\ \ \mathrm{for\ all} \ t\in (0,T_{max})
\end{equation}
and
\begin{equation}\label{L2-v2}
\int_t^{t+\tau}\|\Delta v\|_{L^2}^2\leq K_{12}\ \ \mathrm{for\ all} \ t\in (0,\widetilde{T}_{max}),
\end{equation}
where $\tau$  and $\widetilde{T}_{max}$ are defined in \eqref{DTT}.
\end{lemma}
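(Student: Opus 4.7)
\textbf{Proof proposal for Lemma \ref{L2-v}.} The plan is to test the $v$-equation against $-\Delta v$ and combine the resulting differential inequality with a uniform Gronwall argument, using as inputs the $L^\infty$ bounds on $u,\nabla u,v$ obtained in Lemmas \ref{UW}, \ref{LIV} (and from its proof) and the local-in-time $L^2$ bounds on $\Delta u$ and $w$ from Lemmas \ref{L4} and \ref{L2u}.

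First, I would multiply the second equation of \eqref{pd-2} by $-\Delta v$, integrate by parts using $\partial_\nu v=0$, and expand $\nabla\cdot(v\nabla u)=v\Delta u+\nabla u\cdot\nabla v$ to arrive at
\begin{equation*}
\tfrac{1}{2}\tfrac{d}{dt}\int_\Omega|\nabla v|^2+d_2\int_\Omega|\Delta v|^2
=\xi\int_\Omega v\,\Delta u\,\Delta v+\xi\int_\Omega(\nabla u\cdot\nabla v)\Delta v-\int_\Omega R(u,v,w)\Delta v,
\end{equation*}
where $R(u,v,w)=\mu_2 v(1-v)+uv-b_2 vw$. Using $\|u\|_{L^\infty}\le K$, $\|v\|_{L^\infty}\le K_9$ and $\|\nabla u\|_{L^\infty}\le K_8^\ast$ together with Young's inequality, the first two cross terms are bounded by $\tfrac{d_2}{2}\|\Delta v\|_{L^2}^2+C\xi^2\|v\|_{L^\infty}^2\|\Delta u\|_{L^2}^2+C\xi^2\|\nabla u\|_{L^\infty}^2\|\nabla v\|_{L^2}^2$, while $\|R\|_{L^2}^2\le C(1+\|w\|_{L^2}^2)$ since $u,v$ are uniformly bounded. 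Absorbing the $\|\Delta v\|_{L^2}^2$ contributions into the left-hand side yields a differential inequality of the form
\begin{equation*}
\tfrac{d}{dt}\|\nabla v\|_{L^2}^2+d_2\|\Delta v\|_{L^2}^2\le C_1\|\nabla v\|_{L^2}^2+C_2\|\Delta u\|_{L^2}^2+C_3\|w\|_{L^2}^2+C_4,
\end{equation*}
in which by Lemma \ref{L4} (with $p=2$) and Lemma \ref{L2u} the forcing has $\tau$-integral uniformly bounded in $t$.

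To close the argument I need a uniform-in-time bound, not just the differential inequality above. I would separately test the $v$-equation by $v$ and, invoking $\|v\|_{L^\infty}\le K_9$, $\|\nabla u\|_{L^\infty}\le K_8^\ast$, discard the nonpositive $-b_2\int v^2 w$ and reach
\begin{equation*}
\tfrac{d}{dt}\|v\|_{L^2}^2+d_2\|\nabla v\|_{L^2}^2\le C,
\end{equation*}
which, after integrating and using $\|v\|_{L^2}\le C$, gives $\int_t^{t+\tau}\|\nabla v\|_{L^2}^2\le C$ uniformly in $t$. With this locally uniform $L^1$-in-time bound on $\|\nabla v\|_{L^2}^2$ in hand, I apply the uniform Gronwall lemma (e.g.\ the version of \cite[Lemma 3.4]{SSW14} already used in Lemma \ref{L2}) to the differential inequality above: the coefficient $C_1$ is constant, the forcing has bounded $\tau$-integral, and the mean-value trick supplies a starting time in each window $[(t-\tau)_+,t]$ at which $\|\nabla v\|_{L^2}^2$ is controlled. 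This produces \eqref{L2-v1}. Finally, integrating the differential inequality on $(t,t+\tau)$ and inserting the just-proved bound on $\|\nabla v(\cdot,t)\|_{L^2}$ together with the $\tau$-integral bounds on $\|\Delta u\|_{L^2}^2$ and $\|w\|_{L^2}^2$ gives \eqref{L2-v2}.

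The main obstacle is the cross term $\xi\int v\,\Delta u\,\Delta v$: one cannot pointwise control $\|\Delta u(\cdot,t)\|_{L^2}$, and any attempt to place $\Delta u$ in $L^2(\Omega)$ at a fixed time would fail. The key observation that makes the scheme work is that Lemma \ref{L4} provides precisely the $L^1$-in-time regularity of $\|\Delta u\|_{L^2}^2$ that the uniform Gronwall lemma needs as a forcing, and the $L^\infty$ bound on $v$ (the output of Lemma \ref{LIV}) allows this term to appear linearly in $\|\Delta u\|_{L^2}^2$ rather than in a nonlinear coupling. The term $\int vw\,\Delta v$ is treated in the same spirit, using the local $L^2$-integrability of $w$ from Lemma \ref{L2u}.
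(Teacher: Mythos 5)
Your scheme is correct and rests on the same core calculation as the paper's: testing the $v$-equation by $-\Delta v$, expanding $\nabla\cdot(v\nabla u)=v\Delta u+\nabla u\cdot\nabla v$, absorbing $\|\Delta v\|_{L^2}^2$ via Young's inequality, and using $\|v\|_{L^\infty}\leq K_9$, $\|\nabla u\|_{L^\infty}\leq K_8^*$ together with the local-in-time $L^2$ integrability of $\Delta u$ and $w$ as the Gronwall forcing. The one place where you deviate is cosmetic rather than conceptual: the paper eliminates the $\|\nabla v\|_{L^2}^2$ term from the right side outright, by writing $\bigl(\tfrac{\xi^2(K_8^*)^2}{d_2}+1\bigr)\int_\Omega|\nabla v|^2=-\bigl(\tfrac{\xi^2(K_8^*)^2}{d_2}+1\bigr)\int_\Omega v\,\Delta v\leq\tfrac{d_2}{8}\int_\Omega|\Delta v|^2+C$, which converts the inequality into the clean form $y'+y+c\|\Delta v\|_{L^2}^2\leq h(t)$ to which \cite[Lemma 3.4]{SSW14} applies verbatim; you instead leave $C_1\|\nabla v\|_{L^2}^2$ on the right, obtain a separate $\int_t^{t+\tau}\|\nabla v\|_{L^2}^2\leq C$ bound by testing with $v$, and then close with a mean-value starting point in each $\tau$-window. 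Your route works (it is exactly the pattern the paper itself uses in Lemma \ref{L3}), but note that the inequality you hand to the Gronwall step is of the uniform-Gronwall type $y'\leq C_1 y+h$ with $\int y$ and $\int h$ locally bounded, which is not literally the $y'+y\leq h$ shape of \cite[Lemma 3.4]{SSW14}; you are in fact invoking the mean-value-plus-local-integration argument of Lemma \ref{L3} rather than that lemma as stated. Also, the paper draws the $\tau$-integral bound on $\|\Delta u\|_{L^2}^2$ directly from \eqref{L2u-2} rather than from Lemma \ref{L4} with $p=2$ as you do, though both are available and equivalent here.
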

\begin{proof}
Multiplying the second equation of \eqref{pd-2} by $-\Delta v$ and integrating the result by parts, we have
\begin{equation}\label{L2-v3}
\begin{split}
&\frac{d}{dt}\int_\Omega|\nabla v|^2+d_2\int_\Omega |\Delta v|^2\\
&=\xi\int_\Omega \nabla \cdot(v\nabla u)\Delta v-\mu_2\int_\Omega v \Delta v+\mu_2\int_\Omega v^2\Delta v-\int_\Omega uv\Delta v+b_2\int_\Omega vw \Delta v.\\
\end{split}
\end{equation}
Noting the facts \eqref{Loow}, \eqref{LIV-1} and \eqref{LIU*}, and using Young's inequality, we can derive that
\begin{equation}\label{L2-v4}
\begin{split}
\xi\int_\Omega \nabla \cdot(v\nabla u)\Delta v
&=\xi\int_\Omega \nabla v\cdot\nabla u \Delta v+\xi\int_\Omega v\Delta u\Delta v\\
&\leq \xi K_8^*\int_\Omega |\nabla v| |\Delta v|+ \xi K_9\int_\Omega|\Delta u||\Delta v|\\
&\leq  \frac{d_2}{2}\int_\Omega |\Delta v|^2+\frac{\xi^2(K_8^*)^2}{d_2}\int_\Omega |\nabla v|^2+\frac{\xi^2 K_9^2}{d_2}\int_\Omega |\Delta  u|^2,
\end{split}
\end{equation}
and
\begin{equation}\label{L2-v5}
\begin{split}
&-\mu_2\int_\Omega v \Delta v+\mu_2\int_\Omega v^2\Delta v-\int_\Omega uv\Delta v+b_2\int_\Omega vw \Delta v\\
&\leq (\mu_2 K_9+\mu_2 K_9^2+KK_9)\int_\Omega|\Delta v|+b_2K_9\int_\Omega w|\Delta v|\\
&\leq \frac{d_2}{4}\int_\Omega |\Delta v|^2+\frac{2b_2^2K_9^2}{d_2} \int_\Omega w^2+C_1,
\end{split}
\end{equation}
with $C_1:=\frac{2(\mu_2 K_9+\mu_2 K_9^2+KK_9)^2|\Omega|}{d_2}$. On the other hand, Young's inequality gives us
\begin{equation}\label{L2-v6}
\begin{split}
\left(\frac{\xi^2(K_8^*)^2}{d_2}+1\right)\int_\Omega |\nabla v|^2
&= - \left(\frac{\xi^2(K_8^*)^2}{d_2}+1\right)\int_\Omega v \Delta v\\
&\leq\left(\frac{\xi^2(K_8^*)^2}{d_2}+1\right)K_9 \int_\Omega |\Delta v|\\
&\leq \frac{d_2}{8}\int_\Omega |\Delta v|^2+C_2
\end{split}
\end{equation}
with $C_2:=\frac{2(\xi^2(K_8^*)^2+d_2)^2K_9^2|\Omega|}{d_2^3}.$
Substituting \eqref{L2-v4}, \eqref{L2-v5} and \eqref{L2-v6} into \eqref{L2-v3}, one has
\begin{equation}\label{L2-v7}
\frac{d}{dt}\int_\Omega|\nabla v|^2+\int_\Omega|\nabla v|^2+\frac{d_2}{8}\int_\Omega |\Delta v|^2
\leq \frac{\xi^2 K_9^2}{d_2}\int_\Omega |\Delta  u|^2+\frac{2b_2^2K_9^2}{d_2}  \int_\Omega w^2+C_1+C_2.
\end{equation}
Using \eqref{L1w-t}  and \eqref{L2u-2}, and applying {\color{black} the Gronwall type inequality (cf. \cite[Lemma 3.4]{SSW14})} to \eqref{L2-v7}, one gets \eqref{L2-v1} directly. Then integrating \eqref{L2-v7} over $(t,t+\tau)$, we obtain
\begin{equation*}
\frac{d_2}{8}\int_{t}^{t+\tau}\int_\Omega |\Delta v|^2
\leq \int_\Omega|\nabla v|^2+ \frac{\xi^2 K_9^2}{d_2}\int_{t}^{t+\tau}\int_\Omega |\Delta  u|^2+\frac{2b_2^2K_9^2}{d_2}\int_{t}^{t+\tau}  \int_\Omega w^2+(C_1+C_2)\tau,
\end{equation*}
which gives \eqref{L2-v2} due to  \eqref{L1w-t}  and \eqref{L2u-2}. Then we complete the proof of this lemma.
\end{proof}
Next, we give the estimate of $\|w(\cdot, t)\|_{L^2}$.
\begin{lemma}\label{L2-w}
Let the assumptions in Theorem \ref{GB} hold and $(u, v, w)$ be the solution of \eqref{pd-2}. Then one can find two positive constants $K_{13}$ and $K_{14}$ independent of $t$ such that
\begin{equation}\label{L2-w1}
\| w(\cdot, t)\|_{L^2}\leq K_{13}\ \ \mathrm{for\ all} \ t\in (0,T_{max})
\end{equation}
and
\begin{equation}\label{L2-w2}
\int_t^{t+\tau} \|\nabla w(\cdot,s)\|_{L^2}^2+\int_t^{t+\tau}\|w(\cdot,s)\|_{L^3}^3\leq K_{14}\ \ \mathrm{for\ all} \ t\in (0,\widetilde{T}_{max}),
\end{equation}
where $\tau$ and $\widetilde{T}_{max}$ are defined in \eqref{DTT}.

\end{lemma}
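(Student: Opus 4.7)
My strategy is to test the third equation of \eqref{pd-2} with $w$, integrate by parts, and split the cross--diffusion term $\chi\int_\Omega w(v\nabla u+u\nabla v)\cdot\nabla w$ into two pieces. The piece involving $v\nabla u$ is immediately tamed by $v\le K_9$ (Lemma \ref{LIV}) and the pointwise bound $\|\nabla u\|_{L^\infty}\le K_8^\ast$ derived inside the proof of Lemma \ref{LIV}, yielding a contribution of the form $\tfrac18\|\nabla w\|_{L^2}^2+C\chi^2\|w\|_{L^2}^2$. The piece involving $u\nabla v$ is the delicate one, since only $\|\nabla v\|_{L^2}\le K_{11}$ is at our disposal; I will bound it by $\tfrac18\|\nabla w\|_{L^2}^2+C\chi^2\int_\Omega w^2|\nabla v|^2$ and then control $\int_\Omega w^2|\nabla v|^2\le\|w\|_{L^4}^2\|\nabla v\|_{L^4}^2$ via the two--dimensional Gagliardo--Nirenberg inequalities
\begin{equation*}
\|w\|_{L^4}^2\le C(\|\nabla w\|_{L^2}\|w\|_{L^2}+\|w\|_{L^2}^2),\qquad \|\nabla v\|_{L^4}^2\le CK_{11}(\|\Delta v\|_{L^2}+K_{11}),
\end{equation*}
after which two further uses of Young's inequality absorb the $\|\nabla w\|_{L^2}^2$ contribution into the left--hand side.

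Bounding the reactive terms by $\mu_3\|w\|_{L^2}^2+\int_\Omega vw^2+c_3\int_\Omega\frac{uw^2}{u+w}\le(\mu_3+K_9+c_3)\|w\|_{L^2}^2$ and combining with the above, I expect to end up with a differential inequality of the form
\begin{equation*}
\frac{d}{dt}\|w\|_{L^2}^2+\|\nabla w\|_{L^2}^2+2\mu_3\|w\|_{L^3}^3\le \alpha(t)\,\|w\|_{L^2}^2+C_\ast,
\end{equation*}
where $\alpha(t):=C_0\bigl(1+\|\Delta v(\cdot,t)\|_{L^2}^2\bigr)$. The crucial point is that $\alpha$ is locally uniformly integrable: by \eqref{L2-v2} one has $\int_t^{t+\tau}\alpha(s)\,ds\le M$ for all $t\in(0,\widetilde T_{\max})$.

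For \eqref{L2-w1} I will apply a uniform Gronwall argument in the style already used for Lemma \ref{L3}: given $t\in(0,T_{\max})$, the bound \eqref{L1w-t} supplies some $t_0\in[(t-\tau)_+,t)$ with $\|w(\cdot,t_0)\|_{L^2}^2\le K_3/\tau$, and integrating the differential inequality from $t_0$ to $t\le t_0+\tau$ produces a constant $K_{13}$ of the schematic form $\sqrt{K_3/\tau+C_\ast\tau}\,e^{M/2}$, independent of $t$. Integrating the same differential inequality over $(t,t+\tau)$ and inserting the just--obtained $L^2$ bound together with $\int_t^{t+\tau}\alpha\le M$ then yields \eqref{L2-w2} with a constant $K_{14}$ depending on $K_{13}$, $M$, $C_\ast$ and $\mu_3$. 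The only genuinely delicate step is the treatment of the $u\nabla v$ portion of the taxis flux, and this is exactly where the second--order bound on $v$ established in Lemma \ref{L2-v} is indispensable; once that coupling is closed the remaining arguments are routine.
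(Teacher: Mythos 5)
Your proposal reproduces the paper's own argument essentially step for step: test the $w$-equation with $w$, split the taxis term, absorb $\chi\int_\Omega wv\nabla u\cdot\nabla w$ via the pointwise bounds on $v$ and $\nabla u$, and control $\chi\int_\Omega wu\nabla v\cdot\nabla w$ through $\int_\Omega w^2|\nabla v|^2\le\|w\|_{L^4}^2\|\nabla v\|_{L^4}^2$ with the two Gagliardo--Nirenberg inequalities and the $L^2$-bounds on $\nabla v$ and $\Delta v$ from Lemma \ref{L2-v}, then close with the uniform Gronwall argument driven by $\int_t^{t+\tau}\|w\|_{L^2}^2\le K_3$ and $\int_t^{t+\tau}\|\Delta v\|_{L^2}^2\le K_{12}$. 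The only cosmetic deviation is the additive constant $C_\ast$ you allow in the differential inequality, which is harmless; otherwise this matches the paper's proof exactly.
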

\begin{proof}
Multiplying the third equation of \eqref{pd-2} by $w$ followed by an  integration by parts, one has
\begin{equation}\label{L2-w3}
\begin{split}
&\frac{1}{2}\frac{d}{dt}\int_\Omega w^2+\int_\Omega |\nabla w|^2+\mu_3 \int_\Omega w^3\\
&=\chi\int_\Omega (wv\nabla u+wu\nabla v)\cdot\nabla w+\mu_3\int_\Omega w^2+\int_\Omega vw^2+c_3\int_\Omega\frac{uw^2}{w+u}.\\
\end{split}
\end{equation}
Since $\|u(\cdot,t)\|_{L^\infty}\leq K$ (see Lemma \ref{UW}), $\|v(\cdot,t)\|_{L^\infty}\leq K_9$ (see Lemma \ref{LIV}) and $\|\nabla u(\cdot,t)\|_{L^\infty}\leq K_8^*$ (see \eqref{LIU*}), one can use Young's inequality to obtain
\begin{equation}\label{L2-w4}
\begin{split}
\chi\int_\Omega (wv\nabla u+wu\nabla v)\cdot\nabla w
&\leq  \chi K_8^*K_9\int_\Omega w|\nabla w|+\chi K\int_\Omega w|\nabla v||\nabla w|\\
&\leq  \frac{1}{2}\int_\Omega |\nabla w|^2+\chi^2 (K_8^*)^2K_9^2\int_\Omega w^2+\chi^2K^2\int_\Omega w^2|\nabla v|^2,
\end{split}
\end{equation}
and
\begin{equation}\label{L2-w5}
\mu_3\int_\Omega w^2+\int_\Omega vw^2+c_3\int_\Omega\frac{uw^2}{w+u}
\leq (\mu_3+K_9+c_3)\int_\Omega w^2.
\end{equation}
Then substituting \eqref{L2-w4} and \eqref{L2-w5} into \eqref{L2-w3}, we obtain
\begin{equation}\label{L2-w6}
\begin{split}
\frac{d}{dt}\int_\Omega w^2+\int_\Omega |\nabla w|^2+2\mu_3 \int_\Omega w^3
&\leq  2\chi^2K^2\int_\Omega w^2|\nabla v|^2+C_1\int_\Omega w^2,\\
\end{split}
\end{equation}
where $C_1:=2[\chi^2 (K_8^*)^2K_9^2+\mu_3+K_9+c_3].$

Using the Gagliardo-Nirenberg inequality in two dimensional spaces, one can find two positive constants $C_2$ and $C_3$ such that
\begin{equation}\label{L2-w7}
\begin{split}
\|w\|_{L^4}^2
&\leq C_2\left(\|\nabla w\|_{L^2}\| w\|_{L^2}
+\| w\|_{L^2}^2\right),
\end{split}
\end{equation}
and (see \cite[Lemma 2.5]{JKW-SIAP-2018})
\begin{equation}\label{L2-w8}
\begin{split}
\|\nabla v\|_{L^4}^2
\leq C_3\left(\|\Delta v\|_{L^2}\|\nabla v\|_{L^2}+\|\nabla v\|_{L^2}^2\right)
\leq C_3K_{11}(\|\Delta v\|_{L^2}+K_{11}),
\end{split}
\end{equation}
where   we have used the fact $\|\nabla v(\cdot, t)\|_{L^2}\leq K_{11}$  in \eqref{L2-v1}. Then we can use \eqref{L2-w7} and \eqref{L2-w8} alongside Young's inequality to obtain
\begin{equation}\label{L2-w9}
\begin{split}
2\chi^2K^2\int_\Omega w^2|\nabla v|^2
\leq &2\chi^2K^2\|w\|_{L^4}^2\|\nabla v\|_{L^4}^2\\
\leq & C_4\left(\|\nabla w\|_{L^2}\| w\|_{L^2}
+\| w\|_{L^2}^2\right)(\|\Delta v\|_{L^2}+K_{11})\\
\leq&  C_4\|\nabla w\|_{L^2}\|w\|_{L^2}\|\Delta v\|_{L^2}+C_4K_{11}\|\nabla w\|_{L^2}\|w\|_{L^2}\\
& +C_4\|w\|_{L^2}^2\|\Delta v\|_{L^2}+C_4K_{11}\|w\|_{L^2}^2\\
\leq&  \frac{1}{2}\|\nabla w\|_{L^2}^2+2C_4^2\|w\|_{L^2}^2\|\Delta v\|_{L^2}^2+\frac{1+4C_{4}^2K_{11}^2}{4}\|w\|_{L^2}^2,
\end{split}
\end{equation}
where $C_4=2\chi^2K^2K_{11}C_2C_3$. Then substituting \eqref{L2-w9} into \eqref{L2-w6} yields
\begin{equation}\label{L2-w10}
\begin{split}
\frac{d}{dt}\|w\|_{L^2}^2+\frac{1}{2}\|\nabla w\|_{L^2}^2+2\mu_3 \|w\|_{L^3}^3
&\leq 2C_4^2\|w\|_{L^2}^2\|\Delta v\|_{L^2}^2+\frac{1+4C_{4}^2K_{11}^2+4C_1}{4}\|w\|_{L^2}^2\\
&\leq C_5 \|w\|_{L^2}^2(\|\Delta v\|_{L^2}^2+1)
\end{split}
\end{equation}
with $C_5:=\frac{1+4C_{4}^2K_{11}^2+4C_1+8C_4^2}{4}$.

Since $\int_t^{t+\tau}\|w\|_{L^2}^2\leq K_3$ (see \eqref{L1w-t}) and $\int_t^{t+\tau}\|\Delta v\|_{L^2}^2\leq K_{12}$ (see \eqref{L2-v2}), by using the similar argument as  in Lemma \ref{L3}, we can obtain from \eqref{L2-w10} that
\begin{equation*}
\|w(\cdot,t)\|_{L^2}^2\leq K_3 e^{C_5K_{12}+C_5},
\end{equation*}
which gives \eqref{L2-w1} with $K_{13}=\left(K_3 e^{C_5K_{12}+C_5}\right)^\frac{1}{2}.$

Integrating \eqref{L2-w10} over $(t,t+\tau)$,  using \eqref{L2-w1} and   $\int_t^{t+\tau}\|\Delta v\|_{L^2}^2\leq K_{12}$ (see \eqref{L2-v2}), one has
\begin{equation*}
\begin{split}
\frac{1}{2}\int_t^{t+\tau}\|\nabla w\|_{L^2}^2+2\mu_3 \int_{t}^{t+\tau}\|w\|_{L^3}^3
&\leq \|w\|_{L^2}^2+C_5\int_t^{t+\tau} \|w\|_{L^2}^2(\|\Delta v\|_{L^2}^2+1)\\
&\leq K_{13}^2+C_5K_{13}^2(K_{12}+1):=K_{14},
\end{split}
\end{equation*}
which gives \eqref{L2-w2}.
\end{proof}


\begin{lemma}\label{L4e}
Let the assumptions in Theorem \ref{GB} hold and $(u, v, w)$ be the solution of system  \eqref{pd-2}. Then there exists  a constant $K_{15}>0$ independent of $t$ such that
\begin{equation}\label{L4e-1}
\|\nabla v(\cdot,t)\|_{L^4}\leq K_{15}\ \ \mathrm{for\ all} \ t\in (0,T_{max}).
\end{equation}
\end{lemma}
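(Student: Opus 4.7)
The plan is to apply the variation-of-constants formula to the $v$-equation, take the gradient, and bound each resulting term in $L^4$ via the Neumann heat-semigroup smoothing estimates of Lemma \ref{heatsemigroup}, combined with H\"older's inequality and the local-in-time $L^p$ bounds on $\Delta u$ from Lemma \ref{L4}.

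First I expand the divergence in the second equation of \eqref{pd-2} and rewrite it as
\[
v_t - d_2\Delta v + v = -\xi\,\nabla v\cdot\nabla u - \xi\,v\,\Delta u + v(\mu_2+1-\mu_2 v+u-b_2 w)=:\mathcal{H}_1+\mathcal{H}_2+\mathcal{H}_3,
\]
so that the Duhamel representation reads
\[
\nabla v(\cdot,t) = \nabla e^{(d_2\Delta-1)t}v_0 + \sum_{i=1}^{3}\int_0^t \nabla e^{(d_2\Delta-1)(t-s)}\mathcal{H}_i(s)\,ds.
\]
The initial-data term is bounded via Lemma \ref{heatsemigroup}(i) (with $p=q=4$) by $Ce^{-t}\|\nabla v_0\|_{L^4}$. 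For $\mathcal{H}_1$ and $\mathcal{H}_3$, I invoke Lemma \ref{heatsemigroup}(ii) with $p=4$, $q=2$, $n=2$, producing the integrable kernel $(1+(t-s)^{-3/4})e^{-(t-s)}$. Since $\|\mathcal{H}_1\|_{L^2}\leq \xi K_8^{*}K_{11}$ by \eqref{LIU*} and \eqref{L2-v1}, and $\|\mathcal{H}_3\|_{L^2}\leq C(1+K_{13})$ by \eqref{Loow}, \eqref{LIV-1}, and \eqref{L2-w1}, these two contributions are bounded uniformly in $t$ after integrating the kernel against a constant.

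The main difficulty is $\mathcal{H}_2=-\xi v\Delta u$, because $\|\Delta u(\cdot,s)\|_{L^4}$ is not pointwise bounded in time and is only locally-in-time $L^4$-integrable via Lemma \ref{L4}. I apply Lemma \ref{heatsemigroup}(ii) with $p=q=4$, which gives the borderline-singular kernel $(1+(t-s)^{-1/2})e^{-(t-s)}$. Splitting $e^{-(t-s)}=e^{-3(t-s)/4}\cdot e^{-(t-s)/4}$ and applying H\"older's inequality with conjugate exponents $(4/3,4)$,
\[
\int_0^t(1+(t-s)^{-1/2})e^{-(t-s)}\|\Delta u(s)\|_{L^4}\,ds \leq C\Bigl(\int_0^t e^{-(t-s)}\|\Delta u(s)\|_{L^4}^{4}\,ds\Bigr)^{1/4},
\]
with the first (now finite) factor coming from $(1+z^{-1/2})^{4/3}\sim z^{-2/3}$ being integrable at the origin and decaying exponentially at infinity. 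The remaining integral is uniformly bounded: splitting $[0,t]$ into subintervals of length $\tau$ and invoking \eqref{L4-2} gives $\int_0^t e^{-(t-s)}\|\Delta u\|_{L^4}^{4}\,ds \leq K_{10}e^{\tau}/(1-e^{-\tau})$ for all $t$. Summing the four contributions then yields \eqref{L4e-1}.

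The key obstacle is the borderline non-integrability of the $L^4\to L^4$ gradient semigroup kernel as $s\to t$, confronted with the absence of a pointwise-in-time $L^4$ bound on $\Delta u$. The resolution trades part of the semigroup's exponential decay to secure strict integrability of the singular factor through H\"older, while the leftover exponential weight converts the local-in-time estimate \eqref{L4-2} on $\Delta u$ into a uniform-in-time bound via a geometric series over time intervals of length $\tau$.
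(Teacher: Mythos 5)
Your proof is correct, but it takes a genuinely different route from the paper's. The paper establishes \eqref{L4e-1} by an energy argument: it multiplies the $v$-equation by $|\nabla v|^2\nabla v$ to differentiate $\int_\Omega|\nabla v|^4$, integrates by parts (using $\nabla\Delta v\cdot\nabla v=\tfrac12\Delta|\nabla v|^2-|D^2v|^2$), absorbs the boundary integral via the trace inequality, and closes the estimate with a uniform-Gronwall lemma driven by the local-in-time bounds $\int_t^{t+\tau}\|D^2u\|_{L^4}^4\leq K_{10}$ (Lemma \ref{L4}) \emph{and} $\int_t^{t+\tau}\|\nabla w\|_{L^2}^2\leq K_{14}$ from \eqref{L2-w2}. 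You instead work entirely through the Duhamel representation of $\nabla v$ and the $L^p$-$L^q$ smoothing estimates of Lemma \ref{heatsemigroup}. Your key move is the H\"older split on the critical term $-\xi v\Delta u$: writing $e^{-(t-s)}=e^{-3(t-s)/4}\,e^{-(t-s)/4}$ trades the borderline $(t-s)^{-1/2}$ singularity of the $L^4\!\to\!L^4$ gradient kernel for the strictly integrable $(1+z^{-1/2})^{4/3}e^{-z}$, leaving $\bigl(\int_0^t e^{-(t-s)}\|\Delta u\|_{L^4}^4\,ds\bigr)^{1/4}$, which the geometric-series argument over length-$\tau$ windows then bounds via \eqref{L4-2}. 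Both proofs hinge essentially on Lemma \ref{L4}, but your route sidesteps the boundary trace inequality and the Gagliardo--Nirenberg interpolation for $|\nabla v|^6$, and it requires only the pointwise-in-time bound $\|w\|_{L^2}\leq K_{13}$ from \eqref{L2-w1} rather than the stronger $\int_t^{t+\tau}\|\nabla w\|_{L^2}^2$, so it is a modest simplification in its dependencies. Two small points worth tidying: your stated geometric-series constant $K_{10}e^{\tau}/(1-e^{-\tau})$ omits the factor coming from $|\Delta u|\leq\sqrt 2\,|D^2u|$ (irrelevant for finiteness), and for the final subinterval touching $s=0$ one simply uses that \eqref{L4-2} bounds integrals over any window of length $\leq\tau$.
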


\begin{proof}
By the second equation of  \eqref{pd-2}, one has
\begin{equation}\label{L4e-2}
\begin{split}
\frac{1}{4}\frac{d}{dt}\int_\Omega |\nabla v|^4
&=\int_\Omega |\nabla v|^2\nabla v\cdot \nabla v_t\\
&=d_2\int_\Omega |\nabla v|^2\nabla v\cdot \nabla \Delta v-\xi\int_\Omega |\nabla v|^2\nabla v\cdot \nabla (\nabla \cdot(v\nabla u))\\
&\ \ \ \ + \int_\Omega \nabla [\mu_2v(1-v)+uv-b_2 vw]\cdot\nabla v |\nabla v|^2 \\
&:= I_1+I_2+I_3.
\end{split}
\end{equation}
Since $\nabla \Delta v\cdot  \nabla  v=\frac{1}{2}\Delta |\nabla v|^2-|D^2 v|^2$, we can estimate the term $I_1$ as follows:
\begin{equation}\label{I1}
\begin{split}
I_1&=d_2\int_\Omega |\nabla v|^2\nabla v\cdot \nabla \Delta v\\
&=\frac{d_2}{2}\int_\Omega |\nabla v|^2 \Delta |\nabla v|^2-d_2\int_\Omega |\nabla v|^2 |D^2 v|^2\\
&=\frac{d_2}{2}\int_{\partial\Omega} |\nabla v|^2\frac{\partial |\nabla v|^2}{\partial \nu}dS-\frac{d_2}{2}\int_\Omega |\nabla |\nabla v|^2|^2 -d_2\int_\Omega  |\nabla v|^2 |D^2 v|^2.\\
\end{split}
\end{equation}
Using the facts
$\|u(\cdot,t)\|_{L^\infty}\leq K$ (see Lemma \ref{UW}), $\|v(\cdot,t)\|_{L^\infty}\leq K_9$ (see Lemma \ref{LIV}) and $\|\nabla u(\cdot,t)\|_{L^\infty}\leq K_8^*$ (see \eqref{LIU*}), one can estimate terms $I_2$ and $I_3$ as follows:
\begin{equation}\label{I2}
\begin{split}
I_2&=-\xi\int_\Omega |\nabla v|^2\nabla v\cdot \nabla (\nabla \cdot(v\nabla u))\\
&=\xi\int_\Omega \nabla |\nabla v|^2\cdot \nabla v \nabla \cdot (v\nabla u)+\xi\int_\Omega |\nabla v|^2\Delta v\nabla \cdot (v\nabla u)\\
&\leq  \xi\int_\Omega |\nabla v||\nabla |\nabla v|^2|(K_8^*|\nabla v|+K_9|\Delta u|)+\sqrt{2}\xi\int_\Omega |\nabla v|^2|D^2 v|(K_8^*|\nabla v|+K_9|\Delta u|)\\
&\leq \frac{d_2}{4}\int_\Omega |\nabla |\nabla v|^2|^2+\frac{d_2}{2}\int_\Omega |\nabla v|^2|D^2 v|^2+\frac{4\xi^2(K_8^*)^2}{d_2}\int_\Omega|\nabla v|^4+\frac{4\xi^2K_9^2}{d_2}\int_\Omega|\nabla v|^2|\Delta u|^2\\
&\leq \frac{d_2}{4}\int_\Omega |\nabla |\nabla v|^2|^2+\frac{d_2}{2}\int_\Omega |\nabla v|^2|D^2 v|^2+\frac{4\xi^2[(K_8^*)^2+K_9^2]}{d_2}\int_\Omega|\nabla v|^4+\frac{\xi^2 K_9^2}{d_2}\int_\Omega |\Delta u|^4,
\end{split}
\end{equation}
and
\begin{equation}\label{I3}
\begin{split}
I_3&=\int_\Omega \nabla (\mu_2v(1-v)+uv-b_2 vw)\cdot\nabla v |\nabla v|^2\\
&=\mu_2\int_\Omega  |\nabla v|^4-2\mu_2\int_\Omega v|\nabla v|^4 +\int_\Omega v\nabla u\cdot \nabla v |\nabla v|^2+\int_\Omega u|\nabla v|^4\\
&\ \ \ \  -b_2 \int_\Omega w|\nabla v|^4-b_2\int_\Omega v|\nabla v|^2\nabla w\cdot\nabla v\\
&\leq  (\mu_2+K) \int_\Omega |\nabla v|^4+K_8^*K_9\int_\Omega |\nabla v|^3+b_2K_9\int_\Omega |\nabla v|^3|\nabla w|.\\
\end{split}
\end{equation}
Substituting \eqref{I1}-\eqref{I3} into \eqref{L4e-2}, and using Young's inequality, we end up with
\begin{equation}\label{L4e-3}
\begin{split}
&\frac{d}{dt}\int_\Omega |\nabla v|^4+d_2\int_\Omega |\nabla |\nabla v|^2|^2 +2d_2\int_\Omega  |\nabla v|^2 |D^2 v|^2\\
&\leq 2d_2\int_{\partial \Omega} |\nabla v|^2\frac{\partial |\nabla v|^2}{\partial \nu}dS+C_1\int_\Omega |\nabla v|^4+\frac{4\xi^2K_9^2}{d_2}\int_\Omega |\Delta u|^4\\
&\ \ \ \ +4b_2K_9\int_\Omega |\nabla v|^3|\nabla w|+4K_8^*K_9\int_\Omega |\nabla v|^3,
\end{split}
\end{equation}
where $C_1=\frac{16\xi^2[(K_8^*)^2+K_9^2]}{d_2}+4(\mu_2+K).$
With the inequality $\frac{\partial|\nabla v|^2}{\partial \nu}\leq 2\sigma|\nabla v|^2$ on $\partial\Omega$ for some constant $\sigma>0$ (see \cite[Lemma 4.2]{Mizoguchi}), and the trace inequality $\|\varphi\|_{L^2(\partial\Omega)}\leq \varepsilon \|\nabla \varphi\|_{L^2(\Omega)}
+C_\varepsilon\|\varphi\|_{L^2(\Omega)}$ for any $\varepsilon>0$ (see \cite[Remark 52.9]{Souplet}),
 we derive
 \begin{eqnarray}\label{J1}
2d_2\int_{\partial \Omega} |\nabla v|^2\frac{\partial |\nabla v|^2}{\partial \nu}dS
 \leq  4\sigma d_2 \int_{\partial \Omega} |\nabla v|^4dS\leq \frac{d_2}{2}\int_\Omega |\nabla|\nabla v|^2|^2+C_2\int_\Omega |\nabla  v|^4.
\end{eqnarray}
On the other hand, using Young's inequality, we have
\begin{equation}\label{J2}
4K_8^*K_9\int_\Omega |\nabla v|^3\leq C_1 \int_\Omega |\nabla  v|^4+C_3
\end{equation}
with $C_3=\frac{27(K_8^*K_9)^4|\Omega|}{C_1^3}.$
We substitute  \eqref{J1} and \eqref{J2} into \eqref{L4e-3} to obtain
\begin{equation}\label{L4e-4}
\begin{split}
&\frac{d}{dt}\int_\Omega |\nabla v|^4+\frac{d_2}{2}\int_\Omega |\nabla |\nabla v|^2|^2 +2d_2\int_\Omega  |\nabla v|^2 |D^2 v|^2\\
&\leq  (2C_1+C_2)\int_\Omega|\nabla v|^4+\frac{4\xi^2K_9^2}{d_2}\int_\Omega |\Delta u|^4+4b_2K_9\int_\Omega |\nabla v|^3|\nabla w|+C_3.
\end{split}
\end{equation}

Moreover, integrating by parts, noting $\|v(\cdot,t)\|_{L^\infty}\leq K_9$ and  using Young's inequality, one has
\begin{equation*}\label{L4*}
\begin{split}
(2C_1+C_2+2)\int_\Omega |\nabla  v|^4
&=C_4\int_\Omega |\nabla v|^2\nabla v\cdot \nabla v\\
&=-C_4\int_\Omega v \nabla |\nabla v|^2 \cdot \nabla v-C_4\int_\Omega v|\nabla v|^2\Delta v\\
&\leq C_4K_9\int_\Omega |\nabla |\nabla v|^2| |\nabla v|+C_4K_9\sqrt{2}\int_\Omega |\nabla v|^2|D^2 v|\\
&\leq \frac{d_2}{4}\int_\Omega |\nabla |\nabla v|^2|^2+\frac{d_2}{2}\int_\Omega |\nabla v|^2|D^2 v|^2+\frac{2C_4^2K_9^2}{d_2}\int_\Omega |\nabla v|^2\\
&\leq \frac{d_2}{4}\int_\Omega |\nabla |\nabla v|^2|^2+\frac{d_2}{2}\int_\Omega |\nabla v|^2|D^2 v|^4+\int_\Omega |\nabla  v|^4+\frac{C_4^4K_9^4|\Omega|}{d_2^2},\\
\end{split}
\end{equation*}
which gives
\begin{equation}\label{L4e-5}
(2C_1+C_2+1)\int_\Omega |\nabla  v|^4\leq  \frac{d_2}{4}\int_\Omega |\nabla |\nabla v|^2|^2+\frac{d_2}{2}\int_\Omega |\nabla v|^2|D^2 v|^2+\frac{C_4^4K_9^4|\Omega|}{d_2^2}.
\end{equation}
Similarly, one can derive that
\begin{equation*}
\begin{split}
\int_\Omega |\nabla v|^6
&=\int_\Omega |\nabla v|^4\nabla v\cdot\nabla v\\
&=-2\int_\Omega v|\nabla v|^2\nabla|\nabla v|^2\cdot\nabla v-\int_\Omega v|\nabla v|^4\Delta v\\
&\leq 2K_9 \int_\Omega |\nabla v|^3|\nabla |\nabla v|^2|+\sqrt{2}K_9\int_\Omega |\nabla v|^4|D^2 v|\\
&\leq \frac{3}{8}\int_\Omega |\nabla v|^6+4K_9^2\left(\int_\Omega |\nabla |\nabla v|^2|^2+\int_\Omega |\nabla v|^2|D^2 v|^2\right),
\end{split}
\end{equation*}
which entails us that
\begin{equation}\label{L4e-6}
\int_\Omega |\nabla |\nabla v|^2|^2+\int_\Omega |\nabla v|^2|D^2 v|^2\geq \frac{5}{32K_9^2}\int_\Omega |\nabla v|^6.
\end{equation}
Then substituting \eqref{L4e-5} and  \eqref{L4e-6} into \eqref{L4e-4}, we obtain
\begin{equation*}
\begin{split}
&\frac{d}{dt}\int_\Omega |\nabla v|^4+\int_\Omega |\nabla v|^4+\frac{5d_2}{128K_9^2}\int_\Omega |\nabla v|^6\\
&\leq\frac{4\xi^2K_9^2}{d_2}\int_\Omega |\Delta u|^4+4b_2K_9\int_\Omega |\nabla v|^3|\nabla w|+C_3+\frac{C_4^4K_9^4|\Omega|}{d_2^2}\\
&\leq \frac{4\xi^2K_9^2}{d_2}\int_\Omega |\Delta u|^4+\frac{5d_2}{128K_9^2}\int_\Omega |\nabla v|^6+\frac{512b_2^2K_9^4}{5d_2}\int_\Omega|\nabla w|^2+C_3+\frac{C_4^4K_9^4|\Omega|}{d_2^2},
\end{split}
\end{equation*}
and hence
\begin{equation}\label{L4e-7}
\frac{d}{dt}\int_\Omega |\nabla v|^4+\int_\Omega |\nabla v|^4\leq \frac{16\xi^2K_9^2}{d_2}\int_\Omega |D^2 u|^4+\frac{512b_2^2K_9^4}{5d_2}\int_\Omega|\nabla w|^2+C_3+\frac{4C_4^4K_9^4}{4d_2^2},
\end{equation}
where we have use the fact $|\Delta u|\leq \sqrt{2}|D^2 u|$.
Then applying the Gronwall type inequality (cf. \cite[Lemma 3.4]{SSW14}), and using \eqref{L4-2} and \eqref{L2-w2}, from \eqref{L4e-7}, we get \eqref{L4e-1}.

\end{proof}

\begin{lemma}\label{L3-w}
Let the assumptions in Theorem \ref{GB} hold and $(u, v, w)$ be the solution of system  \eqref{pd-2}. Then it holds that
\begin{equation}\label{L3-w1}
\|w(\cdot,t)\|_{L^3}\leq K_{16},
\end{equation}
where $K_{16}>0$ is a constant independent of $t$.
\end{lemma}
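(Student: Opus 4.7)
My plan is to test the third equation of \eqref{pd-2} against $w^2$ and close the estimate using $\|\nabla v\|_{L^4}\leq K_{15}$ from Lemma \ref{L4e} together with the earlier $L^\infty$ bounds on $u,v$ and on $\nabla u$.

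First, I would multiply the $w$-equation by $w^2$ and integrate by parts, using $\int_\Omega w^2\Delta w=-2\int_\Omega w|\nabla w|^2$ and $-\int_\Omega w^2\nabla\!\cdot\![\chi w(v\nabla u+u\nabla v)]=2\chi\int_\Omega w^2(v\nabla u+u\nabla v)\cdot\nabla w$. Because $\frac{u}{u+w}\le 1$, the reaction terms contribute at most $(\mu_3+K_9+c_3)\int_\Omega w^3$, and the identity reads
\begin{equation*}
\frac{1}{3}\frac{d}{dt}\|w\|_{L^3}^3+2\int_\Omega w|\nabla w|^2+\mu_3\|w\|_{L^4}^4
\le 2\chi\int_\Omega w^2(v\nabla u+u\nabla v)\cdot\nabla w+C\|w\|_{L^3}^3.
\end{equation*}

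Next I would estimate the two taxis contributions separately. Using $\|v\|_{L^\infty}\le K_9$ and $\|\nabla u\|_{L^\infty}\le K_8^*$ (from Lemma \ref{LIV} and \eqref{LIU*}), Young's inequality gives
\begin{equation*}
2\chi K_9K_8^*\int_\Omega w^2|\nabla w|\le \tfrac{1}{2}\int_\Omega w|\nabla w|^2+C_1\int_\Omega w^3,
\end{equation*}
while with $\|u\|_{L^\infty}\le K$,
\begin{equation*}
2\chi K\int_\Omega w^2|\nabla v||\nabla w|\le \tfrac{1}{2}\int_\Omega w|\nabla w|^2+C_2\int_\Omega w^3|\nabla v|^2.
\end{equation*}
After these absorptions, a dissipation term $\int_\Omega w|\nabla w|^2=\tfrac{4}{9}\|\nabla w^{3/2}\|_{L^2}^2$ survives on the left. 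The main obstacle is now the cross term $\int_\Omega w^3|\nabla v|^2$, which is exactly where the newly obtained $\|\nabla v\|_{L^4}$-bound is needed.

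To handle it, I would apply Hölder's inequality to get $\int_\Omega w^3|\nabla v|^2\le \|w\|_{L^6}^3\|\nabla v\|_{L^4}^2\le K_{15}^2\|w^{3/2}\|_{L^4}^2$, and then invoke the two-dimensional Gagliardo--Nirenberg inequality with $\|w^{3/2}\|_{L^{4/3}}=\|w\|_{L^2}^{3/2}\le K_{13}^{3/2}$ (from \eqref{L2-w1}) to obtain
\begin{equation*}
\|w^{3/2}\|_{L^4}^2\le C_3\bigl(\|\nabla w^{3/2}\|_{L^2}^{4/3}K_{13}+K_{13}^3\bigr).
\end{equation*}
A Young's inequality of the form $a^{4/3}\le \varepsilon a^2+C_\varepsilon$ then absorbs $\|\nabla w^{3/2}\|_{L^2}^{4/3}$ into $\tfrac{2}{9}\|\nabla w^{3/2}\|_{L^2}^2$, leaving a constant.

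Finally, I would use Young to dominate $(C+C_1)\|w\|_{L^3}^3$ by $\mu_3\|w\|_{L^4}^4+C_4$, which together with adding $\|w\|_{L^3}^3$ on both sides yields a differential inequality
\begin{equation*}
\frac{d}{dt}\|w\|_{L^3}^3+\|w\|_{L^3}^3\le C_5\quad\text{for all }t\in(0,T_{max}),
\end{equation*}
after which the standard Grönwall inequality delivers the uniform bound $\|w(\cdot,t)\|_{L^3}\le K_{16}$. The technical heart of the argument is the step $\int w^3|\nabla v|^2\to$ coercive dissipation, which closes only because we are in two space dimensions and because Lemma \ref{L4e} gives a time-independent $L^4$-bound for $\nabla v$.
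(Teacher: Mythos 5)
Your proposal follows essentially the same route as the paper: test the $w$-equation against $w^2$, use $\|u\|_{L^\infty}$, $\|v\|_{L^\infty}$, $\|\nabla u\|_{L^\infty}$ to split off the two taxis terms, reduce the remaining cross term to $\int_\Omega w^3|\nabla v|^2 \le K_{15}^2\|w^{3/2}\|_{L^4}^2$ via H\"older, then close with the two-dimensional Gagliardo--Nirenberg inequality and $\|w\|_{L^2}\le K_{13}$ by absorbing $\|\nabla w^{3/2}\|_{L^2}^{4/3}$ into the dissipation $\int_\Omega w|\nabla w|^2=\frac{4}{9}\|\nabla w^{3/2}\|_{L^2}^2$, and finish with Young and Gr\"onwall. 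The steps, the key inputs (Lemma \ref{L4e}, \eqref{L2-w1}), and the structure all coincide with the paper's argument; the only difference is cosmetic bookkeeping in how the dissipation term is split.
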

\begin{proof}
Multiplying the third equation of \eqref{pd-2} by $w^2$, and integrating the result by parts, we obtain
\begin{equation}\label{L3-w2}
\begin{split}
&\frac{1}{3}\frac{d}{dt}\int_\Omega w^3+2\int_\Omega w|\nabla w|^2+\mu_3 \int_\Omega w^4\\
&=2\chi\int_\Omega w^2(v\nabla u+u\nabla v)\cdot\nabla w+\mu_3\int_\Omega w^3+\int_\Omega vw^3+c_3\int_\Omega\frac{uw^3}{w+u}.
\end{split}
\end{equation}
By the facts $\|u(\cdot,t)\|_{L^\infty}\leq K$ (see Lemma \ref{UW}), $\|v(\cdot,t)\|_{L^\infty}\leq K_9$ (see Lemma \ref{LIV}), $\|\nabla u(\cdot,t)\|_{L^\infty}\leq K_8^*$ (see \eqref{LIU*}) and $\|\nabla v\|_{L^4}\leq K_{15}$ (see Lemma \ref{L4e}), one can derive that
\begin{equation}\label{L3-w3}
\begin{split}
&2\chi\int_\Omega w^2(v\nabla u+u\nabla v)\cdot\nabla w\\
&\leq
 2\chi K_8^*K_9\int_\Omega w^2 |\nabla w|+2\chi K\int_\Omega w^2|\nabla v||\nabla w|\\
 &\leq  \int_\Omega w|\nabla w|^2+2 \chi^2 (K_8^*)^2K^2_9\int_\Omega w^3+2\chi^2K^2\int_\Omega w^3|\nabla v|^2,\\
\end{split}
\end{equation}
and
\begin{equation}\label{L3-w4}
\begin{split}
\mu_3\int_\Omega w^3+\int_\Omega vw^3+c_3\int_\Omega\frac{uw^3}{w+u}\leq (\mu_3+K_9+c_3) \int_\Omega w^3.
\end{split}
\end{equation}
Substituting \eqref{L3-w3} and \eqref{L3-w4} into
\eqref{L3-w2} gives
\begin{equation}\label{L3-w5}
\begin{split}
\frac{d}{dt}\int_\Omega w^3+3\int_\Omega w|\nabla w|^2+3\mu_3 \int_\Omega w^4
\leq 6\chi^2K^2\int_\Omega w^3|\nabla v|^2+C_1\int_\Omega w^3,
\end{split}
\end{equation}
with $C_1:=6 \chi^2 (K_8^*)^2K^2_9+3(\mu_3+K_9+c_3).$ Then using the Gagliardo-Nirenberg inequality and Young's inequality, and utilizing the facts $\|w(\cdot,t)\|_{L^2}\leq K_{13}$ (see \eqref{L2-w1}) and  $\|\nabla v\|_{L^4}\leq K_{15}$ (see  Lemma \ref{L4e}), one has
\begin{equation}\label{L3-w6}
\begin{split}
6\chi^2K^2\int_\Omega w^3|\nabla v|^2
&\leq  6\chi^2K^2\left(\int_\Omega w^6\right)^\frac{1}{2}\left(\int_\Omega |\nabla v|^4\right)^\frac{1}{2}\\
&\leq 6\chi^2K^2K_{15}^2\|w^\frac{3}{2}\|_{L^4}^2\\
&\leq 6\chi^2K^2K_{15}^2C_2(\|\nabla w^\frac{3}{2}\|_{L^2}^\frac{4}{3}
 \|w^\frac{3}{2}\|_{L^\frac{4}{3}}^\frac{2}{3}+\|w^\frac{3}{2}\|_{L^\frac{4}{3}}^2)\\
 &=6\chi^2K^2K_{15}^2C_2K_{13} \|\nabla w^\frac{3}{2}\|_{L^2}^\frac{4}{3}+6\chi^2K^2K_{15}^2C_2K^3_{13}\\
 &\leq \int_\Omega w|\nabla w|^2+C_3.
\end{split}
\end{equation}
Then substituting \eqref{L3-w6} into \eqref{L3-w5}, and adding $\int_\Omega w^3$ on both sides of the resulting inequality alongside the Young's inequality: $(C_1+1)\int_\Omega w^3\leq \mu_3 \int_\Omega w^4+C_4$,  we obtain
\begin{equation*}
\frac{d}{dt}\int_\Omega w^3+\int_\Omega w^3\leq C_3+C_4,
\end{equation*}
 which gives \eqref{L3-w1} directly upon an application of Gr\"{o}nwall's inequality.
\end{proof}

Next, we shall show the boundedness $\|\nabla v(\cdot, t)\|_{L^\infty}$. Precisely, we have

\begin{lemma}\label{GL-v}
Let the assumptions in Theorem \ref{GB} hold and $(u, v, w)$ be the solution of  \eqref{pd-2}. Then there exists  a constant $K_{17}>0$ independent of $t$ such that for all $t\in (0,T_{max})$
\begin{equation}\label{GL-v1}
\|\nabla v(\cdot, t)\|_{L^\infty}\leq K_{17}.
\end{equation}
\end{lemma}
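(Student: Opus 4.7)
My strategy is to establish an $L^{\infty}$ bound on $\nabla v$ via a Duhamel representation combined with the Neumann semigroup $L^{p}$--$L^{q}$ estimates in Lemma \ref{heatsemigroup}, exploiting all of the previously derived pointwise bounds ($\|u\|_{L^\infty}\!\le\! K$, $\|v\|_{L^\infty}\!\le\! K_9$, $\|\nabla u\|_{L^\infty}\!\le\! K_8^{*}$, $\|w\|_{L^3}\!\le\! K_{16}$, $\|\nabla v\|_{L^4}\!\le\! K_{15}$) and the crucial time-integrated weighted estimate \eqref{L4-2*} for $\|\Delta u\|_{L^p}$. Rewriting the $v$-equation of \eqref{pd-2} as
\[
v_t-d_2\Delta v+v=-\xi\,\nabla v\cdot\nabla u-\xi\, v\,\Delta u+R(x,t),\qquad R:=\mu_2 v(1-v)+uv-b_2 vw+v,
\]
where I deliberately \emph{expand} the divergence $\nabla\!\cdot\!(v\nabla u)=\nabla v\!\cdot\!\nabla u+v\Delta u$ (applying $\nabla e^{t\Delta}\nabla\!\cdot\!$ directly to $v\nabla u$ would generate the non-integrable singularity $(t-s)^{-1-1/q}$ in two dimensions), the variation-of-constants formula gives after applying $\nabla$,
\[
\nabla v(\cdot,t)=\nabla e^{(d_2\Delta-1)t}v_0+\int_0^t\nabla e^{(d_2\Delta-1)(t-s)}\bigl[-\xi\nabla v\!\cdot\!\nabla u-\xi v\Delta u+R\bigr](\cdot,s)\,ds.
\]

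The initial-data piece is bounded in $L^{\infty}$ by $C\|v_0\|_{W^{1,\infty}}$. For the ``lower order'' term $R$, the pointwise estimate $|R|\le C(1+w)$ together with Lemma \ref{L3-w} gives $\|R(\cdot,s)\|_{L^3}\le C(1+K_{16})$; then \eqref{Lp-2} with $q=3$, $n=2$ produces the integrable kernel $(1+r^{-5/6})e^{-(d_2\lambda_1+1)r}$, yielding a bounded contribution. The bilinear piece $\nabla v\cdot\nabla u$ is treated in the same spirit: $\|\nabla v\|_{L^4}\le K_{15}$ from Lemma \ref{L4e} combined with $\|\nabla u\|_{L^\infty}\le K_8^{*}$ from \eqref{LIU*} give $\|\nabla v\cdot\nabla u\|_{L^4}\le K_{15}K_8^{*}$, and \eqref{Lp-2} with $q=4$ produces the integrable kernel $(1+r^{-3/4})e^{-(d_2\lambda_1+1)r}$. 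There is no circularity here, since only $\|\nabla v\|_{L^4}$ (already controlled) is used to estimate $\|\nabla v\|_{L^\infty}$.

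The principal difficulty is the term $\xi v\Delta u$, because we have \emph{no pointwise-in-time} bound on $\|\Delta u(\cdot,s)\|_{L^p}$; only the time-integrated weighted estimate \eqref{L4-2*} is at our disposal. I would take $p=8$, so that \eqref{Lp-2} yields the singularity $(1+r^{-5/8})$, and then split the exponential factor in the semigroup as $e^{-(d_2\lambda_1+1)r}=e^{-d_2\lambda_1 r}\cdot e^{-r}$, absorbing the $e^{-r}$ piece with $\|\Delta u\|_{L^8}$ (this splitting is where the strict positivity $d_2\lambda_1>0$ is essential). Hölder's inequality with conjugate exponents $(8/7,8)$ then yields, with $r=t-s$,
\[
\int_0^t\bigl(1+r^{-5/8}\bigr)e^{-d_2\lambda_1 r}\bigl[e^{-r}\|\Delta u(t-r)\|_{L^8}\bigr]\,dr\le\Bigl(\int_0^t\!(1+r^{-5/8})^{8/7}e^{-\tfrac{8d_2\lambda_1}{7}r}dr\Bigr)^{\!7/8}\!\Bigl(\int_0^t\!e^{-8(t-s)}\|\Delta u(s)\|_{L^8}^{8}ds\Bigr)^{\!1/8},
\]
in which the first factor is finite because $(r^{-5/8})^{8/7}=r^{-5/7}$ is integrable at $0$ and the exponential decay gives integrability at infinity, while the second factor is controlled by \eqref{L4-2*} of Lemma \ref{L4}; the subinterval $s\in(0,\tau)$ that is omitted from \eqref{L4-2*} contributes only a bounded amount by continuity of $\|\Delta u(\cdot,s)\|_{L^8}$ on the compact set $[0,\tau]$ (stemming from $u_0\in W^{2,\infty}$).

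The main obstacle is precisely the absence of a pointwise-in-time $L^{p}$ bound on $\Delta u$; this is what dictates both the use of the weighted estimate \eqref{L4-2*} and the choice of $p=8$, matched to the Hölder conjugate $8/7$ so that the singularity $(t-s)^{-5/7}$ stays integrable at $0$ while $d_2\lambda_1>0$ supplies the decay at infinity. Summing the four contributions produces $\|\nabla v(\cdot,t)\|_{L^{\infty}}\le K_{17}$ with a constant independent of $t\in(0,T_{\max})$, yielding \eqref{GL-v1}.
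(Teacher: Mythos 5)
Your proof is essentially the paper's own: same Duhamel representation for $\nabla v$, same expansion $\nabla\cdot(v\nabla u)=\nabla v\cdot\nabla u+v\Delta u$ to avoid the non-integrable singularity, same use of $\|\nabla v\|_{L^4}\,\|\nabla u\|_{L^\infty}$ for the bilinear piece, $\|w\|_{L^3}$ for the lower-order piece, and the exponentially weighted estimate \eqref{L4-2*} with $p>4$ to handle $v\Delta u$ (your Hölder with conjugates $(8/7,8)$ is the same splitting the paper performs via Young's inequality pointwise in $s$). The one small wrinkle is that you start Duhamel at $t=0$ and then must separately justify the finiteness of $\int_0^\tau e^{-8(t-s)}\|\Delta u(s)\|_{L^8}^8\,ds$; the paper avoids this informality by noting that \eqref{GL-v1} is automatic on $(0,\tau]$ from local existence and then starting the variation-of-constants formula at $s=\tau$, which is exactly the range covered by \eqref{L4-2*}.
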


\begin{proof}
{\color{black}
From Lemma \ref{LS}, we know that \eqref{GL-v1} holds for all $t\in(0,\tau]$, where $\tau$ is defined by \eqref{DTT}. Hence to prove Lemma \ref{GL-v}, we only need to show  \eqref{GL-v1} holds for all $t\in (\tau,T_{max})$.}

To this end, we rewrite the second equation of \eqref{pd-2} as follows:
\begin{equation}\label{GL-v2}
v_t-d_2\Delta v+v=-\xi\nabla\cdot(v\nabla u)+\mu_2v(1-v)+uv-b_2 vw+v.
\end{equation}
With the variation-of-constants formula for  \eqref{GL-v2}, we obtain
\begin{equation*}\label{GL-v3}
\begin{split}
\nabla v(\cdot,t)
=&\nabla e^{(d_2\Delta-1)(t-\tau) }v(\cdot,\tau)-\xi \int_{\tau}^t\nabla e^{(d_2\Delta-1)(t-s)}[\nabla\cdot(v\nabla u)]ds\\
&+\int_\tau^t\nabla e^{(d_2\Delta-1)(t-s)}[\mu_2v(1-v)+uv-b_2 vw+v]ds,
\end{split}
\end{equation*}
which gives
\begin{equation}\label{ell}
\begin{split}
\|\nabla v(\cdot,t)\|_{L^\infty}
\leq &\|\nabla e^{(d_2\Delta-1)(t-\tau)}v(\cdot,\tau)\|_{L^\infty}+\xi \int_\tau^t\|\nabla e^{(d_2\Delta-1)(t-s)}[\nabla\cdot(v\nabla u)]\|_{L^\infty}ds\\
&+\int_\tau^t\|\nabla e^{(d_2\Delta-1)(t-s)}[\mu_2v(1-v)+uv-b_2 vw+v]\|_{L^\infty}ds\\
:=&\ell_1+\ell_2+\ell_3.
\end{split}
\end{equation}
Using the semigroup smoothing estimates in Lemma \ref{heatsemigroup},
we first estimate the term $\ell_1$ as
\begin{equation}\label{ell-1}
\ell_1=\|\nabla e^{(d_2\Delta-1) (t-\tau)}v(\cdot,\tau)\|_{L^\infty}\leq C_1 ,
\end{equation}
and estimate the term $\ell_2$ as
\begin{equation}\label{ell-2}
\begin{split}
\ell_2
&= \xi \int_\tau^t\|\nabla e^{(d_2\Delta-1)(t-s)}(\nabla v\cdot\nabla u+v\Delta u)\|_{L^\infty}ds\\
&\leq  \xi \int_\tau^t\|\nabla e^{(d_2\Delta-1)(t-s)}(\nabla v\cdot\nabla u)\|_{L^\infty}ds+\xi \int_\tau^t\|\nabla e^{(d_2\Delta-1)(t-s)}(v\Delta u)\|_{L^\infty}ds\\
&:= \ell_{21}+\ell_{22}.
\end{split}
\end{equation}
Then
from Lemma \ref{heatsemigroup}, Lemma \ref{LIV}, Lemma \ref{L4e} and \eqref{LIU*}, it follows that
\begin{equation}\label{ell-21}
\begin{split}
\ell_{21}
&= \xi \int_\tau^t\|\nabla e^{(d_2\Delta-1)(t-s)}(\nabla v\cdot\nabla u)\|_{L^\infty}ds\\
&\leq  C_2 \int_\tau^t(1+(t-s)^{-\frac{1}{2}-\frac{1}{4}})e^{-(d_2\lambda_1+1)(t-s)}\|\nabla v\cdot \nabla u\|_{L^4}ds\\
&\leq  C_2 \int_\tau^t(1+(t-s)^{-\frac{3}{4}})e^{-(d_2\lambda_1+1)(t-s)}\|\nabla v\|_{L^4}\| \nabla u\|_{L^\infty}ds\\
&\leq  C_3 \int_\tau^t(1+(t-s)^{-\frac{3}{4}})e^{-(d_2\lambda_1+1)(t-s)}ds\\
&\leq  C_4,
\end{split}
\end{equation}
and
\begin{equation}\label{ell-22}
\begin{split}
\ell_{22}
&=\xi \int_\tau^t\|\nabla e^{(d_2\Delta-1)(t-s)}(v\Delta u)\|_{L^\infty}ds\\
&\leq  C_5 \int_\tau^t(1+(t-s)^{-\frac{1}{2}-\frac{1}{p}})e^{-(d_2\lambda_1+1)(t-s)}\|v\Delta u\|_{L^p}ds \\
&\leq C_6 \int_\tau^t(1+(t-s)^{-\frac{1}{2}-\frac{1}{p}})
e^{-(d_2\lambda_1+1)(t-s)}\|\Delta u\|_{L^p}ds\\
&\leq  C_7 \int_\tau^t \left(1+(t-s)^{-\frac{1}{2}-\frac{1}{p}}\right)^\frac{p}{p-1}
e^{-\frac{d_2\lambda_1p}{p-1}(t-s)}ds+C_7\int_\tau^t e^{-p(t-s)}\|\Delta u\|_{L^p}^pds.\\
\end{split}
\end{equation}
Choosing $p>4$, we can check that $\frac{p+2}{2(p-1)}<1$, and hence
\begin{equation*}
\int_\tau^t \left(1+(t-s)^{-\frac{1}{2}-\frac{1}{p}}\right)^\frac{p}{p-1}
e^{-\frac{d_2\lambda_1p}{p-1}(t-s)}ds\leq C_8 \int_\tau^t \left(1+(t-s)^{-\frac{p+2}{2(p-1)}}\right)
e^{-\frac{d_2\lambda_1p}{p-1}(t-s)}ds\leq C_9,
\end{equation*}
which, alongside \eqref{L4-2*},  gives
$
\ell_{22}\leq C_{10}
$
for some constant $C_{10}>0$ from \eqref{ell-22}. This entails that
\begin{equation}\label{ell-2*}
\ell_2\leq C_4+C_{10}.
\end{equation}
Finally using the boundedness of $\|v(\cdot, t)\|_{L^\infty}, \|u(\cdot, t)\|_{L^\infty}$ and $\|w(\cdot, t)\|_{L^3}$, we get the estimate for $\ell_3$ as follows:
\begin{equation}\label{ell-3}
\begin{split}
\ell_3&=\int_\tau^t\|\nabla e^{(d_2\Delta-1)(t-s)}[\mu_2v(1-v)+uv-b_2 vw+v]\|_{L^\infty}ds\\
&\leq C_{11}\int_\tau^t(1+(t-s)^{-\frac{1}{2}-\frac{1}{3}})e^{-(d_2\lambda_1+1)(t-s)}\|\mu_2v(1-v)+uv-b_2 vw+v\|_{L^3}ds\\
&\leq C_{12}.
\end{split}
\end{equation}
Then substituting \eqref{ell-1}, \eqref{ell-2*} and \eqref{ell-3} into \eqref{ell}, we obtain \eqref{GL-v1}. The proof of Lemma \ref{GL-v} is completed.
\end{proof}
\begin{lemma}\label{GL-w}
Let the assumptions in Theorem \ref{GB} hold and $(u, v, w)$ be the solution of system  \eqref{pd-2}. Then one can find  a constant $K_{18}>0$ independent of $t$ such that
\begin{equation}\label{GL-w1}
\|w(\cdot, t)\|_{L^\infty}\leq K_{18} \ \ \mathrm{for\ all} \ t\in (0,T_{max}).
\end{equation}
\end{lemma}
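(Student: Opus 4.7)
My plan is to mimic the variation-of-constants argument used for $\|\nabla v\|_{L^\infty}$ in Lemma \ref{GL-v}, but now applied to $w$ itself. Having already secured $\|u\|_{L^\infty}\leq K$, $\|v\|_{L^\infty}\leq K_9$, $\|\nabla u\|_{L^\infty}\leq K_8^*$, $\|\nabla v\|_{L^\infty}\leq K_{17}$ and $\|w\|_{L^3}\leq K_{16}$, the linear drift coefficient $v\nabla u+u\nabla v$ is uniformly bounded in $L^\infty$, and the remaining task is only to absorb the convective term and the (at most quadratic) reaction term.

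First I would rewrite the third equation of \eqref{pd-2} as
\begin{equation*}
w_t-\Delta w+w=-\chi\nabla\cdot\bigl[w(v\nabla u+u\nabla v)\bigr]+G(x,t),
\end{equation*}
where $G:=\mu_3 w(1-w)+vw+c_3\frac{uw}{u+w}+w$ satisfies, thanks to the $L^\infty$ bounds on $u,v$, a pointwise estimate $|G|\leq C_1(w+w^2)$. Since $w$ is smooth up to $t=\tau$, $\|w(\cdot,\tau)\|_{L^\infty}$ is finite, and for $t\in(0,\tau]$ the bound \eqref{GL-w1} is automatic from Lemma \ref{LS}; so I only need to handle $t\in(\tau,T_{max})$.

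Next, applying the variation-of-constants formula against the semigroup $e^{(\Delta-1)(t-s)}$ (so as to inherit exponential decay) yields
\begin{equation*}
w(\cdot,t)=e^{(\Delta-1)(t-\tau)}w(\cdot,\tau)-\chi\!\int_\tau^t\! e^{(\Delta-1)(t-s)}\nabla\cdot\bigl[w(v\nabla u+u\nabla v)\bigr]ds+\int_\tau^t\! e^{(\Delta-1)(t-s)}G(\cdot,s)\,ds.
\end{equation*}
Taking $L^\infty$ norms and using the contractivity of $e^{t\Delta}$ on $L^\infty$, the first term is bounded by $\|w(\cdot,\tau)\|_{L^\infty}$. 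For the convective term, I would invoke the smoothing estimate \eqref{Lp-4} with $q=3$, $p=\infty$, $N=2$, giving a singularity $(t-s)^{-1/2-1/3}=(t-s)^{-5/6}$ which is integrable, together with the bound
\begin{equation*}
\bigl\|w(v\nabla u+u\nabla v)\bigr\|_{L^3}\leq(K_9 K_8^*+K\, K_{17})\,K_{16}.
\end{equation*}
For the reaction term, the pointwise bound $|G|\leq C_1(w+w^2)$ and $\|w\|_{L^3}\leq K_{16}$ yield $\|G\|_{L^{3/2}}\leq C_2$, so \eqref{Lp-3} with $q=3/2$, $p=\infty$, $N=2$ produces an integrable singularity $(t-s)^{-2/3}$. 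Combining these three pieces with the exponential factor $e^{-(\lambda_1+1)(t-s)}$ gives a time-independent bound on $\|w(\cdot,t)\|_{L^\infty}$, establishing \eqref{GL-w1}.

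The only real obstacle is arranging the integrability exponents so that the singularities near $s=t$ are $<1$; the choices $q=3$ in the convective piece and $q=3/2$ in the reactive piece are precisely calibrated so the $L^3$ estimate on $w$ from Lemma \ref{L3-w} and the $L^\infty$ estimate on $v\nabla u+u\nabla v$ from Lemmas \ref{LIV}, \ref{GL-v} and \eqref{LIU*} feed in cleanly. Everything else is a routine estimate of convolutions with a Gamma-function tail, entirely analogous to what was done for $\ell_1,\ell_2,\ell_3$ in the proof of Lemma \ref{GL-v}.
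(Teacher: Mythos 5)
Your proposal is correct and follows essentially the same route as the paper: a variation-of-constants decomposition into the initial-data contribution, the convective term handled via the $L^p$-$L^q$ smoothing estimate \eqref{Lp-4} with $q=3$ (singularity $(t-s)^{-5/6}$), and the reaction term, all fed by $\|w\|_{L^3}\le K_{16}$ and the previously secured $L^\infty$ bounds on $u,v,\nabla u,\nabla v$. The one small variant is in the reaction piece: the paper uses the one-sided sign structure (dropping the favorable $-\mu_3 w^2$ via positivity of the semigroup) to keep the reaction linear in $w$ and estimate it in $L^3$ with a milder singularity $(t-s)^{-1/3}$, whereas you absorb the quadratic term through the pointwise bound $|G|\le C(w+w^2)$ and estimate $G$ in $L^{3/2}$, producing the still-integrable singularity $(t-s)^{-2/3}$; either works, and your version has the minor advantage of not relying on the sign of the $w^2$ term.
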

\begin{proof}
By  the variation of constants formula, $w$ can be represented as
 \begin{equation*}
 \begin{split}
w(\cdot, t)=&e^{(\Delta-1) t } w_0- \chi \int_0^t e^{(\Delta-1)(t-s)}\nabla\cdot(wv \nabla u+wu \nabla v)\\
&+ \int_0^t e^{(\Delta-1)(t-s)}w\left(\mu_3+1-\mu_3w-v+c_3\frac{u}{u+w}\right)\\
\leq& e^{(\Delta-1) t } w_0- \chi \int_0^t e^{(\Delta-1)(t-s)}\nabla\cdot(wv \nabla u+wu \nabla v)\\
& +\int_0^t e^{(\Delta-1)(t-s)}w\left(\mu_3+1+c_3\frac{u}{u+w}\right)
\end{split}
\end{equation*}
and hence
\begin{equation}\label{GL-w2}
 \begin{split}
\|w(\cdot, t)\|_{L^\infty}\leq &\|e^{(\Delta-1) t } w_0\|_{L^\infty}+\chi \int_0^t \|e^{(\Delta-1)(t-s)}\nabla\cdot(wv \nabla u+wu \nabla v)\|_{L^\infty}\\
&+ \int_0^t \Big\| e^{(\Delta-1)(t-s)}w\left(\mu_3+1+c_3\frac{u}{u+w}\right)\Big\|_{L^\infty}:=J_1+J_2+J_3.
\end{split}
\end{equation}
Using the well-known semigroup  smoothing estimates (see Lemma \ref{heatsemigroup}), we have
\begin{equation}\label{GL-w3}
J_1=\|e^{(\Delta-1) t } w_0\|_{L^\infty}\leq C_1\|w_0\|_{L^\infty}
\end{equation}
for some constant $C>0$. Noting the facts
$\|u(\cdot,t)\|_{L^\infty}\leq K$ (see Lemma \ref{UW}), $\|v(\cdot,t)\|_{L^\infty}\leq K_9$ (see Lemma \ref{LIV}), $\|\nabla u(\cdot,t)\|_{L^\infty}\leq K_8^*$ (see \eqref{LIU*}),
$\|\nabla v(\cdot,t)\|_{L^\infty}\leq K_{17}$ (see Lemma \ref{GL-v}) and $\|w(\cdot,t)\|_{L^3}\leq K_{16} $ (see Lemma \ref{L3-w}), one can use \eqref{Lp-4} with the fact that $C_0^\infty(\Omega)$ is dense in $L^p(\Omega)$ for any $1\leq p<\infty$ to obtain
\begin{equation}\label{GL-w4}
\begin{split}
J_2&\leq \chi \gamma_4 \int_0^t (1+(t-s)^{-\frac{5}{6}})e^{-(\lambda_1+1)(t-s)}\|w(v \nabla u+u \nabla v)\|_{L^3}ds\\
&\leq \chi \gamma_4(K_9K_8^*+KK_{17})\int_0^t (1+(t-s)^{-\frac{5}{6}})e^{-(\lambda_1+1)(t-s)}\|w\|_{L^3}ds\\
&\leq \chi \gamma_4(K_9K_8^*+KK_{17})K_{16}\int_0^t (1+(t-s)^{-\frac{5}{6}})e^{-(\lambda_1+1)(t-s)}ds\\
&\leq \frac{\chi \gamma_4(K_9K_8^*+KK_{17})K_{16}}{\lambda_1}(1+\Gamma\left(1/6\right)\lambda_1^\frac{5}{6}).
\end{split}
\end{equation}
Moreover, we can use \eqref{Lp-3} to derive that
\begin{equation}\label{GL-w5}
\begin{split}
J_3&\leq  \gamma_3(\mu_3+1+c_3)\int_0^t (1+(t-s)^{-\frac{1}{3}})e^{-(t-s)}\|w\|_{L^3}ds\\
&\leq \gamma_3(\mu_3+1+c_3)K_{16} \int_0^t (1+(t-s)^{-\frac{1}{3}})e^{-(t-s)}ds\\
&\leq \gamma_3(\mu_3+1+c_3)K_{16}(1+\Gamma(2/3)).
\end{split}
\end{equation}
Then substituting \eqref{GL-w3}, \eqref{GL-w4} and \eqref{GL-w5} into  \eqref{GL-w2}, one obtains \eqref{GL-w1}.
\end{proof}	

\begin{proof}
[Proof of Theorem \ref{GB}]
The combination of Lemma \ref{UW}, \eqref{LIU*}, Lemma \ref{LIV}, Lemma \ref{GL-v} and Lemma \ref{GL-w}, yields a constant $C_1>0$ independent of $t$ such that
\begin{equation*}
\|u(\cdot,t)\|_{W^{1,\infty}}+\|v(\cdot,t)\|_{W^{1,\infty}}+\|w(\cdot,t)\|_{L^{\infty}}\leq C_1,
\end{equation*}
which together with  the extension criterion in Lemma \ref{LS} proves Theorem \ref{GB}.
\end{proof}

\section{Global stabilization of solutions}
In this section, we are devoted to  studying the global stability of coexistence steady states as asserted in Theorem \ref{GS} and Theorem \ref{GS1} by the Lyapunov functional method along with Barb\v{a}lat's Lemma as stated below.
\begin{lemma}[Barb\u{a}lat's Lemma \cite{Barbalat1959}] \label{barbalet}
If $h: [1, \infty) \to \mathbb{R}$ is a uniformly continuous function such that
$\lim\limits_{t \to \infty} \int_1^t h(s)ds$ exists, then $\lim\limits_{t \to \infty}h(t)=0$.
\end{lemma}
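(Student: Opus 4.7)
The plan is to argue by contradiction and exploit the interplay between the pointwise failure of $h(t)\to 0$ and the modulus of continuity supplied by uniform continuity. Suppose the conclusion fails: then there exist $\varepsilon_0>0$ and an increasing sequence $t_n\to\infty$ with $|h(t_n)|\geq \varepsilon_0$ for all $n$. After passing to a subsequence I may assume the $t_n$ are well separated, e.g.\ $t_{n+1}\geq t_n+1$.

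The next step is to transfer this pointwise lower bound at the isolated instants $t_n$ into a lower bound on a fixed-length neighbourhood. Let $\delta\in(0,1)$ be furnished by the uniform continuity of $h$ for the tolerance $\varepsilon_0/2$. Then for every $s\in[t_n,t_n+\delta]$ one has $|h(s)-h(t_n)|\leq \varepsilon_0/2$, which gives two facts I want to use simultaneously: $|h(s)|\geq \varepsilon_0/2$, and $h(s)$ has the same sign as $h(t_n)$. Integrating over this fixed-length interval then yields
\[
\left|\int_{t_n}^{t_n+\delta} h(s)\,ds\right|\geq \frac{\varepsilon_0\delta}{2}
\]
uniformly in $n$, and by the separation $t_{n+1}\geq t_n+1>t_n+\delta$ the intervals $[t_n,t_n+\delta]$ are pairwise disjoint and march off to infinity.

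Finally I would contradict the assumed convergence of the integral. Set $H(t):=\int_1^t h(s)\,ds$; by hypothesis $H$ has a finite limit at infinity and is therefore Cauchy on $[1,\infty)$. However the bound above gives $|H(t_n+\delta)-H(t_n)|\geq \varepsilon_0\delta/2$ for all $n$, and since $t_n\to\infty$ this violates the Cauchy property, delivering the desired contradiction. The only mild subtlety is securing sign preservation of $h$ throughout $[t_n,t_n+\delta]$, which is precisely why the uniform-continuity tolerance must be chosen strictly below $\varepsilon_0$ (here $\varepsilon_0/2$), so that $h$ cannot vanish on the interval and no cancellation can shrink the integral below $\varepsilon_0\delta/2$.
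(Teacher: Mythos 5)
The paper states Barb\u{a}lat's Lemma without proof, merely citing \cite{Barbalat1959}, so there is no internal argument to compare against. Your proof is correct: the contradiction you set up, using uniform continuity with tolerance $\varepsilon_0/2$ to keep $h$ bounded away from zero and of constant sign on each $[t_n,t_n+\delta]$, the resulting uniform lower bound $\varepsilon_0\delta/2$ on $\bigl|\int_{t_n}^{t_n+\delta}h\bigr|$, and the failure of the Cauchy property for $H(t)=\int_1^t h$, is precisely the standard argument for this result and is complete.
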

Moreover, we need higher regularity of solutions as follows.
\begin{lemma}\label{le3-2}
Let $(u, v, w)$ be the unique global bounded classical solution of \eqref{pd-2} given by Theorem \ref{GB}.
Then for any given $0<\alpha <1,$ there exists a constant $C>0$ such that
\begin{eqnarray*}\label{us1}
\|u(\cdot,t)\|_{C^{2+\alpha, 1+\frac{\alpha}{2}}(\bar \Omega \times [1, \infty))}+\|v(\cdot,t)\|_{C^{2+\alpha, 1+\frac{\alpha}{2}}(\bar \Omega \times [1, \infty))}+\|w(\cdot,t)\|_{C^{2+\alpha, 1+\frac{\alpha}{2}}(\bar \Omega \times [1, \infty))}\leq C.
\end{eqnarray*}
\end{lemma}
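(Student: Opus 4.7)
The plan is to bootstrap regularity starting from the bounds in Theorem \ref{GB}, namely $\|u(\cdot,t)\|_{W^{1,\infty}}+\|v(\cdot,t)\|_{W^{1,\infty}}+\|w(\cdot,t)\|_{L^\infty}\leq C$ uniformly in $t$. The strategy has two stages: first obtain $W^{2,1}_p$ (equivalently $C^{1+\alpha,(1+\alpha)/2}$) regularity via parabolic $L^p$ theory, then promote to $C^{2+\alpha,1+\alpha/2}$ via Schauder estimates, in both cases exploiting the triangular structure of the coupling by treating the equations in the order $u$, $v$, $w$.

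First I would work on a unit time interval $[t_0,t_0+2]$ with $t_0 \geq 1$ arbitrary, and prove estimates there that depend only on the uniform bounds of Theorem \ref{GB} (so that shifting in time yields the global-in-time statement on $[1,\infty)$). Writing the $u$-equation as $u_t-d_1\Delta u=\mu_1u(1-u)-b_1uv-b_3uw/(u+w)$, the right-hand side is uniformly bounded in $L^\infty$, so standard parabolic $L^p$ theory (cf.\ Ladyzhenskaya--Solonnikov--Ural'ceva) yields $u\in W^{2,1}_p$ for any $p<\infty$ with uniform bound. Next, I would recast the $v$-equation in non-divergence form,
\begin{equation*}
v_t-d_2\Delta v+\xi\nabla u\cdot\nabla v=-\xi v\Delta u+\mu_2 v(1-v)+uv-b_2 vw,
\end{equation*}
observing that the drift $\xi\nabla u$ is uniformly $L^\infty$ and the right-hand side lies in $L^p$ for any $p$ (since $\Delta u\in L^p$ and all factors are in $L^\infty$), whence $v\in W^{2,1}_p$. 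The $w$-equation is treated identically after expanding $\nabla\cdot[w(v\nabla u+u\nabla v)]$: the coefficient $\chi(v\nabla u+u\nabla v)$ is in $L^\infty$, and the undifferentiated terms $-\chi w(v\Delta u+2\nabla u\cdot\nabla v+u\Delta v)+h(u,v,w)$ lie in $L^p$ by what we already have for $u,v$. Sobolev embedding on space-time then gives $u,v,w\in C^{1+\alpha,(1+\alpha)/2}$ for some $\alpha\in(0,1)$.

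With this intermediate regularity, the reaction terms $F_1=\mu_1u(1-u)-b_1uv-b_3uw/(u+w)$ etc.\ are now Hölder continuous (note the denominator $u+w$ is bounded away from zero on compact sets by the strong maximum principle already used in Lemma \ref{LS}, but since $u\geq 0$ and $w\geq 0$, a small nuance is that we only need $(u,w)\mapsto uw/(u+w)$ to be Lipschitz away from $(0,0)$; this is fine on $[1,\infty)$ where $u$ stays strictly positive). Then I would apply the parabolic Schauder estimate to the $u$-equation first, concluding $u\in C^{2+\alpha,1+\alpha/2}$, so in particular $\Delta u\in C^{\alpha,\alpha/2}$. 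Feeding this into the $v$-equation above, the drift $\xi\nabla u\in C^{\alpha,\alpha/2}$ (Schauder) and the source $-\xi v\Delta u+\mu_2v(1-v)+uv-b_2vw$ is also in $C^{\alpha,\alpha/2}$, so Schauder yields $v\in C^{2+\alpha,1+\alpha/2}$. An identical application to the $w$-equation, now that both $\Delta u$ and $\Delta v$ are Hölder continuous, finishes the proof.

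The main technical obstacle is the triangular coupling: the taxis term for $v$ involves $\Delta u$ and the alarm-taxis term for $w$ involves both $\Delta u$ and $\Delta v$, so one cannot apply Schauder to the three equations simultaneously but rather must respect the ordering $u\to v\to w$ at each bootstrap step. A secondary point is the compatibility condition for Schauder up to the boundary, which is ensured by the homogeneous Neumann condition together with the smoothness of $\partial\Omega$. Finally, the uniform-in-time conclusion on $[1,\infty)$ follows because all constants in the $L^p$ and Schauder estimates depend only on $\Omega$, the coefficients, and the uniform $L^\infty$ bounds from Theorem \ref{GB}, not on the starting time $t_0$.
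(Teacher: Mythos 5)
Your proposal is correct and follows essentially the same route as the paper's own proof: parabolic $L^p$ theory on unit time intervals (plus Sobolev embedding) to reach $C^{1+\alpha,(1+\alpha)/2}$, then Schauder estimates applied in the order $u\to v\to w$ with the $v$- and $w$-equations recast in non-divergence form so that the drift $\xi\nabla u$ (resp.\ $\chi(v\nabla u+u\nabla v)$) is H\"older. Your extra remarks on the triangular structure of the coupling and on the Lipschitz continuity of the ratio-dependent nonlinearity are sound and make explicit some steps the paper leaves implicit.
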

\begin{proof}
By the boundedness of $(u, v, w)$ (see Theorem \ref{GB})  and the interior $L^p$ estimate (\cite{Lieberman-1996}), we find a constant $C_1>0$ such that
\begin{eqnarray}\label{eq3-1}
\|u\|_{W_p^{2, 1}(\Omega \times [i+\frac{1}{4}, i+3])}+\|v\|_{W_p^{2, 1}(\Omega \times [i+\frac{1}{4}, i+3])}+\|w\|_{W_p^{2, 1}(\Omega \times [i+\frac{1}{4}, i+3])}\leq C_1,\, \forall i \geq 0.
\end{eqnarray}
 Then the Sobolev embedding theorem with $p$ suitably large gives
  \begin{eqnarray}\label{eq3-2}
\|u\|_{C^{1+\alpha, \frac{1+\alpha}{2}}(\bar \Omega \times [\frac{1}{4}, \infty))}+\|v\|_{C^{1+\alpha, \frac{1+\alpha}{2}}(\bar \Omega \times [\frac{1}{4}, \infty))}+\|w\|_{C^{1+\alpha, \frac{1+\alpha}{2}}(\bar \Omega \times [\frac{1}{4}, \infty))}\leq C_2.
\end{eqnarray}
 Using \eqref{eq3-2} and applying the  Schauder estimate \cite{La-book-1968}  to the first equation of \eqref{pd-2}, we obtain
\begin{eqnarray}\label{eq3-3}
\|u\|_{C^{2+\alpha, 1+\frac{\alpha}{2}}(\bar \Omega \times [i+\frac{1}{3}, i+3])}\leq C_3,\, \forall i \geq 0,
\end{eqnarray}
and hence
\begin{eqnarray}\label{eq3-4}
\|u\|_{C^{2+\alpha, 1+\frac{\alpha}{2}}(\bar \Omega \times [\frac{1}{3}, +\infty))}\leq C_4.
\end{eqnarray}
The second equation of \eqref{pd-2} can be rewritten as
\begin{eqnarray}\label{eq3-5}
v_t-d_2\Delta v+\xi\nabla u\cdot \nabla v =G(x, t), \,\, x\in \Omega,\,\, t>0,
\end{eqnarray}
where
$$
G(x, t)=-\xi \Delta u \cdot v+v(\mu_2-\mu_2v+u-b_2 w).
$$
By \eqref{eq3-3} and \eqref{eq3-4}, we have
$$\|G\|_{C^{\alpha, \frac{\alpha}{2}}(\bar \Omega \times [i+\frac{1}{3}, i+3])}+ \|\xi\nabla u\|_{C^{\alpha, \frac{\alpha}{2}}(\bar \Omega \times [i+\frac{1}{3}, i+3])} \leq C_5,\, \forall i \geq 0.$$
Then applying the standard parabolic Schauder estimate to \eqref{eq3-5}, one can find a positive constant $C_6>0$ such that $\|v\|_{C^{2+\alpha, 1+\frac{\alpha}{2}}(\bar\Omega \times [i+1, i+3])} \leq C_6$ for all $i \geq 0$,
 which gives
\begin{eqnarray}\label{eq3-6}
\|v\|_{C^{2+\alpha, 1+\frac{\alpha}{2}}(\bar\Omega \times [1, +\infty))} \leq C_7.
\end{eqnarray}
Finally, similar arguments applied to the third equation of \eqref{pd-2} give us a constant  $C_8>0$ such that
$
\|w\|_{C^{2+\alpha, 1+\frac{\alpha}{2}}(\bar\Omega \times [1, +\infty))} \leq C_8,
$
which completes the proof of Lemma \ref{le3-2}.

\end{proof}

\subsection{Global stability for $b_3=c_3=0$} In this subsection, we first show the global stabilization of coexistence steady state in the case of $b_3=c_3=0$. In this case, the system \eqref{pd-2-1} becomes
\begin{equation}\label{pd-2-2}
\begin{cases}
u_t=d_1 \Delta u+u(1-u-b_1 v),&x\in\Omega, t>0,\\
v_t=d_2\Delta v-\xi\nabla\cdot(v\nabla u)+v(1-v+u-b_2 w),&x\in\Omega, t>0,\\
w_t=\Delta w-\chi\nabla \cdot(wv\nabla u+wu\nabla v)+ w(1-w+v),&x\in\Omega, t>0,\\
\frac{\partial u}{\partial \nu}=\frac{\partial v}{\partial \nu}=\frac{\partial w}{\partial \nu}=0, &x\in \partial\Omega,\,\, t>0,\\
u(x,0)=u_0(x), \,\, v(x, 0)=v_0(x), \,\, w(x, 0)=w_0(x),&x\in \Omega.
\end{cases}
\end{equation}
One can easily check  that the system \eqref{pd-2-2} has a unique constant coexistence steady state $(u^*,v^*, w^*)$ defined by \eqref{uvw*} provided
\begin{equation}\label{GS1-C1}
0<b_2<2 \ \ \mathrm{and}\ \ \ 1+b_1b_2+b_2>b_1.
\end{equation}
Our purpose is to study the global stability of $(u^*,v^*,w^*)$. To this end, we introduce  the following energy functional:
\begin{eqnarray}\label{eq3-8}
\mathcal{E}_1(t)=\mathcal{I}_u(t)+\mathcal{I}_v(t)+\mathcal{I}_w(t),
\end{eqnarray}
where
$$
\mathcal{I}_s(t)=\int_\Omega \left(s-s^*-s^*\ln \frac{s}{s^*}\right), \ s\in \{u,v,w\}.
$$
The energy functional like \eqref{eq3-8} is the Lyapunov functional to the corresponding ODE (cf. \cite{Braun, Hsu-survey}), and it can be extended to PDE models with diffusion (cf. \cite{Du-Shi, Yi-Wei-Shi}) or prey-taxis (cf. \cite{Ahn-Yoon-JDE-2020, JW-JDE-2017}).

Then we have the following results:
\begin{lemma}\label{le3-3}
Let  the condition \eqref{GS1-C1} hold. If
\begin{equation}\label{GS1-C2}
(b_1-1)^2+(b_2-1)^2<4,
\end{equation}
then there exist $\xi_1>0$ and $\chi_1>0$ such that whenever $\xi \in (0,\xi_1)$ and $\chi \in (0, \chi_1)$ there holds  for some $T_0>0$ that
\begin{equation}\label{GS1-C4}
\|u(\cdot,t)-u^*\|_{L^\infty}+\|v(\cdot,t)-v^*\|_{L^\infty}+\|w(\cdot,t)-w^*\|_{L^\infty}\leq K_{19} e^{-\lambda t}, \mathrm{\ for \ all} \  t>T_0,
\end{equation}
where $K_{19}>0$ and $\lambda>0$ are constants independent of $t$.

\end{lemma}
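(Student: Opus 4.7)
The plan is to differentiate the Lyapunov functional $\mathcal{E}_1(t)$ along trajectories of \eqref{pd-2-2} and extract a dissipation inequality that, via Barb\v{a}lat's Lemma and a bootstrapping step, delivers the claimed exponential decay. Using $\mathcal{I}_s'(t)=\int_\Omega(1-s^*/s)\,s_t$ for $s\in\{u,v,w\}$, integrating by parts under the Neumann boundary conditions, and invoking the steady-state identities $1-u^*-b_1v^*=0$, $1-v^*+u^*-b_2w^*=0$, $1-w^*+v^*=0$ (immediate from \eqref{uvw*}), a direct computation gives
\begin{align*}
\frac{d}{dt}\mathcal{E}_1(t)
&=-d_1u^*\!\int_\Omega\frac{|\nabla u|^2}{u^2}
  -d_2v^*\!\int_\Omega\frac{|\nabla v|^2}{v^2}
  -w^*\!\int_\Omega\frac{|\nabla w|^2}{w^2}\\
&\quad+\xi v^*\!\int_\Omega\frac{\nabla u\cdot\nabla v}{v}
  +\chi w^*\!\int_\Omega\frac{(v\nabla u+u\nabla v)\cdot\nabla w}{w}
  -\mathcal{Q}(t),
\end{align*}
where $\mathcal{Q}(t):=\int_\Omega\bigl[(u-u^*)^2+(v-v^*)^2+(w-w^*)^2-(1-b_1)(u-u^*)(v-v^*)-(1-b_2)(v-v^*)(w-w^*)\bigr]$.

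The quadratic form $\mathcal{Q}$ is associated with the symmetric matrix whose leading principal minors are $1$, $1-\frac{(1-b_1)^2}{4}$, and $1-\frac{(1-b_1)^2+(1-b_2)^2}{4}$; condition \eqref{GS1-C2} forces all three to be strictly positive, so $\mathcal{Q}(t)\geq\delta_0\int_\Omega[(u-u^*)^2+(v-v^*)^2+(w-w^*)^2]$ for some $\delta_0>0$. The two cross-diffusion integrals will be absorbed into the gradient dissipation via Cauchy--Schwarz and Young's inequality using the pointwise bounds $u\leq K$ from Lemma \ref{UW} and $v\leq K_9$ from Lemma \ref{LIV}: rewriting e.g. $|\nabla u||\nabla v|/v=u\cdot(|\nabla u|/u)(|\nabla v|/v)\leq K(|\nabla u|/u)(|\nabla v|/v)$ yields $\xi v^*\bigl|\int_\Omega\nabla u\cdot\nabla v/v\bigr|\leq \frac{\xi v^*K}{2}\bigl(\int_\Omega|\nabla u|^2/u^2+\int_\Omega|\nabla v|^2/v^2\bigr)$, and an analogous estimate (involving both $K$ and $K_9$) disposes of the $\chi$-term, crucially \emph{without requiring positive lower bounds} for $u,v,w$. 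Choosing $\xi\in(0,\xi_1)$ and $\chi\in(0,\chi_1)$ with $\xi_1,\chi_1$ smaller than explicit thresholds determined by $d_1u^*,d_2v^*,w^*,K,K_9$ then yields
\begin{equation*}
\frac{d}{dt}\mathcal{E}_1(t)\leq -\delta\int_\Omega\bigl[(u-u^*)^2+(v-v^*)^2+(w-w^*)^2\bigr]
\end{equation*}
for some $\delta>0$.

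Integrating this inequality over $[0,\infty)$ gives $\int_0^\infty\!\int_\Omega[(u-u^*)^2+(v-v^*)^2+(w-w^*)^2]<\infty$. The uniform $C^{2+\alpha,1+\alpha/2}$ bound of Lemma \ref{le3-2} makes the integrand uniformly continuous in $t$, so Barb\v{a}lat's Lemma yields $L^2$-convergence to $(u^*,v^*,w^*)$, and Gagliardo--Nirenberg interpolation against the same regularity bound upgrades this to $L^\infty$-convergence. Hence there is $T_0>0$ with $u\geq u^*/2$, $v\geq v^*/2$, $w\geq w^*/2$ for all $t>T_0$, and on this range the elementary estimate $\frac{(s-s^*)^2}{4s^*}\leq s-s^*-s^*\ln(s/s^*)\leq\frac{(s-s^*)^2}{s^*}$ converts the dissipation inequality into $\frac{d}{dt}\mathcal{E}_1(t)\leq-\sigma\mathcal{E}_1(t)$, producing $\mathcal{E}_1(t)\leq Ce^{-\sigma t}$ and thus $L^2$-exponential decay; a final Gagliardo--Nirenberg interpolation with the Lemma \ref{le3-2} bound promotes this to the $L^\infty$ decay \eqref{GS1-C4}. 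The main subtlety is self-consistency of the smallness thresholds: $K_9$ itself depends on $\xi$ (though not on $\chi$, by Theorem \ref{GB}), so one first fixes a preliminary upper bound on $\xi_1$ to freeze $K_9$ at a value controlled solely by parameters and initial data, and only then chooses the ultimate $\xi_1,\chi_1$ against this frozen bound; the second, more conceptual, challenge is matching the algebraic structure of $\mathcal{Q}$ precisely to condition \eqref{GS1-C2}, which is what singles out that condition as the sharp one.
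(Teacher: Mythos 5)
Your proposal is correct and follows essentially the same strategy as the paper: differentiate the relative-entropy Lyapunov functional $\mathcal{E}_1$, split the result into a quadratic form in $(u-u^*,v-v^*,w-w^*)$ and a quadratic form in $(\nabla u/u,\nabla v/v,\nabla w/w)$, verify positivity of the former under \eqref{GS1-C2}, absorb the prey-taxis and alarm-taxis cross terms into the gradient dissipation for $\xi,\chi$ small, and then combine Barb\u{a}lat's Lemma with the $C^{2+\alpha,1+\alpha/2}$ bounds of Lemma \ref{le3-2} and Gagliardo--Nirenberg interpolation to pass from $L^2$ to $L^\infty$ convergence, before bootstrapping to exponential decay via the two-sided Taylor bound on $s-s^*-s^*\ln(s/s^*)$.

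Two minor points of comparison. First, where the paper writes the gradient part as $\int_\Omega Y B_1 Y^T$ with $Y=(\nabla u/u,\nabla v/v,\nabla w/w)$ and checks positive semi-definiteness of $B_1$ through its leading minors and determinant, you instead absorb the off-diagonal entries by Cauchy--Schwarz and Young using $u\leq K$, $v\leq K_9$; this is a diagonal-dominance criterion and is a sufficient (slightly cruder) substitute for the paper's determinant computation, leading to different but equally valid explicit thresholds $\xi_1,\chi_1$. Second, you are more careful than the paper about the self-consistency of the smallness conditions: since $K_9=\|v\|_{L^\infty}$ depends on $\xi$ (though not $\chi$), one should fix a preliminary bound on $\xi$ to freeze $K_9$ before determining $\chi_1$ — the paper simply quotes Theorem \ref{GB} that $\|u\|_{L^\infty},\|v\|_{L^\infty}$ are $\chi$-independent and writes $\chi_1$ in terms of $\|v\|_{L^\infty}$, leaving this dependence implicit; your remark makes it explicit. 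One small imprecision in your write-up: the two-sided estimate $\frac{(s-s^*)^2}{4s^*}\leq s-s^*-s^*\ln(s/s^*)\leq\frac{(s-s^*)^2}{s^*}$ with those particular constants needs both $s\geq s^*/2$ and an upper bound like $s\leq 3s^*/2$ (the paper uses the two-sided sandwich $u^*/2\leq\eta\leq 3u^*/2$ in \eqref{eq3-23}); since the $L^\infty$ convergence you already established supplies both, the argument goes through after adjusting constants, so this is a constant-bookkeeping issue rather than a gap.
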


\begin{proof}
The coexistence steady state $(u^*,v^*,w^*)$ satisfies equations
\begin{equation}\label{CE}
\begin{cases}
1-u^*-b_1 v^*=0,\\
1-v^*+u^*-b_2 w^*=0,\\
1-w^*+v^*=0.\\
\end{cases}
\end{equation}
Then one can check that \eqref{CE} has a unique positive steady state $(u^*,v^*,w^*)$ defined by \eqref{uvw*}  under the condition \eqref{GS1-C1}.

{\bf{Step 1.}} In this step, we shall show the global stability of $(u^*,v^*,w^*)$ by means of the energy functional $\mathcal{E}_1(t)$.
Using the first equation of \eqref{pd-2-2} and the
fact that $1-u^*-b_1 v^*=0,$ we find
\begin{equation}\label{eq3-12}
\begin{split}
\frac{d}{dt}\mathcal{I}_u(t)&= \int_\Omega \left(1-\frac{u^*}{u}\right)u_t\\
&=-d_1 u^*\int_\Omega \frac{|\nabla u|^2}{u^2}+\int_\Omega(u-u^*)(1-u-b_1v)\\
&=-d_1 u^*\int_\Omega \frac{|\nabla u|^2}{u^2}-\int_\Omega(u-u^*)^2-b_1\int_\Omega(u-u^*)(v-v^*).
\end{split}
\end{equation}
Similarly, we can use the second equation of \eqref{pd-2-2} and the fact $1-v^*+u^*-b_2 w^*=0$ to obtain
\begin{equation}\label{eq3-13}
\begin{split}
\frac{d}{dt}\mathcal{I}_v(t) &= \int_\Omega \left(1-\frac{v^*}{v}\right)v_t\\
&=-d_2 v^*\int_\Omega \frac{|\nabla v|^2}{v^2}+\xi v^*\int_\Omega\frac{\nabla u\cdot \nabla v}{v}+\int_\Omega (v-v^*)(1-v+u-b_2w)\\
&=-d_2 v^*\int_\Omega \frac{|\nabla v|^2}{v^2}+\xi v^*\int_\Omega\frac{\nabla u\cdot \nabla v}{v}-\int_\Omega(v-v^*)^2+\int_\Omega(u-u^*)(v-v^*)\\
&\ \ \ \ -b_2\int_\Omega(v-v^*)(w-w^*).
\end{split}
\end{equation}
Furthermore with the fact $1-w^*+v^*=0$ and using the third equation of \eqref{pd-2-2}, we have
\begin{equation}\label{eq3-14}
\begin{split}
\frac{d}{dt}\mathcal{I}_w(t)&= \int_\Omega \left(1-\frac{w^*}{w}\right)w_t\\
&=-w^*\int_\Omega \frac{|\nabla w|^2}{w^2}+\chi w^*\int_\Omega\frac{v \nabla u\cdot \nabla w+u\nabla v\cdot \nabla w}{w}+\int_\Omega (w-w^*)(1-w+v)\\
&=-w^*\int_\Omega \frac{|\nabla w|^2}{w^2}+\chi w^*\int_\Omega\frac{v \nabla u\cdot \nabla w+u\nabla v\cdot \nabla w}{w}\\
&\ \ \ \ -\int_\Omega (w-w^*)^2+\int_\Omega (v-v^*)(w-w^*).
\end{split}
\end{equation}
Then substituting \eqref{eq3-12},  \eqref{eq3-13} and \eqref{eq3-14} into \eqref{eq3-8}, we end up with
\begin{equation}\label{eq3-15}
\frac{d}{dt}\mathcal{E}_1(t)=-\int_\Omega X A_1 X^T-\int_\Omega Y B_1 Y^T,
\end{equation}
 where $X=(u-u^{*},v-v^{*},w-w^{*})$, $Y=\left(\frac{\nabla u}{u},\frac{\nabla v}{v},\frac{\nabla w}{w}\right)$
and $A_1, B_1$ are symmetric matrices denoted by
\begin{equation*}
A_1:=
\begin{pmatrix}
1&\frac{b_{1}-1}{2}&0\\
\frac{b_1-1}{2}&1&\frac{b_2-1}{2}\\
0&\frac{b_2-1}{2}&1
\end{pmatrix}, \ \
B_1:=
\begin{pmatrix}
d_1 u^{*}&-\frac{\xi v^* u}{2}&-\frac{\chi w^*uv}{2}\\
-\frac{\xi v^* u}{2}&d_2v^*&-\frac{\chi w^*uv}{2}\\
-\frac{\chi w^*uv}{2}&-\frac{\chi w^*uv}{2}&w^{*}
\end{pmatrix}.
\end{equation*}

Next, we show that the matrices $A_1$ and $B_1$ are positive definite and positive semi-definite respectively. Notice $\eqref{GS1-C2}$  implies  $0<b_1<3$. Then
\begin{equation*}
  \begin{vmatrix}
  1&\frac{b_{1}-1}{2}\\
\frac{b_1-1}{2}&1
  \end{vmatrix}=\frac{(3-b_1)(1+b_1)}{4}> 0,
\end{equation*}
and	
	\begin{equation*}
|A_1|=\frac{4-(b_1-1)^2-(b_2-1)^2}{4}>0.
\end{equation*}
Therefore, $A_1$ is positive definite and there exists a constant $\alpha>0$ such that
\begin{eqnarray}\label{eq3-16}
X A_1 X^T\geq \alpha |X|^2.
\end{eqnarray}	
On the other hand, after some calculations, one can derive that
\begin{equation*}
  \begin{vmatrix}
 d_1 u^{*}&-\frac{\xi v^* u}{2}\\
-\frac{\xi v^* u}{2}&d_2v^*
  \end{vmatrix}=\frac{v^*(4d_1 d_2u^*-\xi^2v^*u^2)}{4}\geq \frac{v^*(4d_1 d_2u^*-\xi^2v^*\|u\|_{L^\infty}^2)}{4}
\end{equation*}
and		
\begin{equation*}
\begin{split}
|B_1|=&-\frac{w^*}{4}\left[\xi\chi^2v^*w^*u^3v^2+\chi^2w^*(d_1 u^*+d_2v^*)u^2v^2+\xi^2 (v^*)^2u^2-4d_1 d_2u^* v^*\right]\\
\geq&-\frac{w^*}{4}\left[\xi\chi^2v^*w^*\|u\|_{L^\infty}^3\|v\|_{L^\infty}^2+\chi^2w^*(d_1 u^*+d_2v^*)\|u\|_{L^\infty}^2 \|v\|_{L^\infty}^2\right]\\
&-\frac{w^*}{4}\left[\xi^2 (v^*)^2\|u\|_{L^\infty}^2-4d_1 d_2u^* v^*\right].\\
\end{split}
\end{equation*}
Noticing that $\|u\|_{L^\infty}$ and $\|v\|_{L^\infty}$ are independent of parameters $\xi$ and $\chi$ (see Theorem \ref{GB}), we can find appropriate numbers $\xi_1>0$ and $\chi_1>0$, fox example,
$$\xi_1= \sqrt{\frac{2d_1 d_2u^*}{ v^*\|u\|_{L^\infty}^2}} \ \text{and}  \ \chi_1=\sqrt{{2d_1 d_2u^* v^*}/{\|u\|_{L^\infty}^2 \|v\|_{L^\infty}^2(\xi v^*w^*\|u\|_{L^\infty}+w^*(d_1 u^*+d_2v^*))}}$$
such that if $0<\xi<\xi_1, 0<\chi<\chi_1$, then
$$4d_1 d_2u^* v^*>\xi\chi^2v^*w^*\|u\|_{L^\infty}^3\|v\|_{L^\infty}^2+\chi^2w^*(d_2v^*+d_1 u^*)\|u\|_{L^\infty}^2 \|v\|_{L^\infty}^2+\xi^2 (v^*)^2\|u\|_{L^\infty}^2,$$
which guarantees that  $B$ is a positive semi-definite matrix, and hence	
	\begin{eqnarray}\label{eq3-17}
Y B_1 Y^T\geq 0.
\end{eqnarray}
Substituting \eqref{eq3-16}	and \eqref{eq3-17} into \eqref{eq3-15}, we obtain
\begin{equation}\label{EF1t}
\frac{d}{dt}\mathcal{E}_1(t)+\alpha \mathcal{F}_1(t)\leq 0
\end{equation}
with
\begin{equation*}\label{F1t}
\mathcal{F}_1(t):= \int_\Omega (u-u^*)^2+(v-v^*)^2+(w-w^*)^2.
\end{equation*}
Moreover, we can show that $\mathcal{E}_1(t)\geq 0$ for all $t>0.$ In fact, letting $\varphi(z):=z-u^* \ln z$ for $z>0,$ one can check that $\varphi'(z)=1-\frac{u^*}{z}$ and $\varphi''(z)=\frac{u^*}{z^2}$. By the Taylor's expansion, there exists a quantity $\eta=\theta u+(1-\theta)u^*$  with $\theta\in(0,1)$ such that
\begin{equation}\label{TL-1}
  u-u^{*}-u^{*}\ln\frac{u}{u^{*}} =\varphi(u)-\varphi(u^*)
  =\frac{\varphi''(\eta )}{2}(u-u^{*})^2
  =\frac{u^{*}}{2\eta^2}(u-u^{*})^2\geq 0,
\end{equation}
which implies $\mathcal{I}_{u}(t)\geq 0$. Similarly, we have that $\mathcal{I}_{v}(t)\geq0$ and $\mathcal{I}_{w}(t)\geq0$. Then it follows that $\mathcal{E}_{1}(t)=\mathcal{I}_{u}(t)+\mathcal{I}_{v}(t)+\mathcal{I}_{w}(t)\geq 0$.

%
Then integrating \eqref{EF1t} with respect of $t$ over $(1,\infty)$ along with $\mathcal{E}_1(t)\geq 0$, gives
\begin{equation}\label{F1t-I}
\int_1^\infty \mathcal{F}_{1}(t) \leq \frac{1}{\alpha}\mathcal{E}_{1} (1) <\infty.
\end{equation}
Then using the regularity of $u, v, w$ obtained in Lemma \ref{le3-2}, one can derive that $\mathcal{F}_{1}(t)$ is uniformly continuous in $[1, \infty)$. Then using \eqref{F1t-I} and applying Lemma \ref{barbalet}, we obtain
\begin{equation*}
 \mathcal{F}_{1}(t)=\int_{\Omega}(u-u^{*})^2+\int_{\Omega}(v-v^{*})^2+\int_{\Omega}(w-w^{*})^2 \to 0 \mbox{ as } t \to \infty,
\end{equation*}
which gives
\begin{equation}\label{eq3-19}
\lim\limits_{t\to\infty}(\|u-u^*\|_{L^2}+\|v-v^*\|_{L^2}+\|w-w^*\|_{L^2})=0.
\end{equation}
By Lemma \ref{le3-2}, we find
\begin{equation}\label{eq3-20}
\|u-u^*\|_{W^{1,\infty}}+\|v-v^*\|_{W^{1,\infty}}+\|w-w^*\|_{W^{1,\infty}}\leq C_1, \mbox{ for all } t>1.
\end{equation}
Then applying the Gagliardo-Nirenberg inequality and using \eqref{eq3-20}, one has
\begin{equation}\label{eq3-20*}
\|u-u^*\|_{L^\infty}\leq C_2 \|u-u^*\|_{W^{1,\infty}}^\frac{1}{2}\|u-u^*\|_{L^2}^\frac{1}{2}\leq C_2 C_1^\frac{1}{2}\|u-u^*\|_{L^2}^\frac{1}{2},
\end{equation}
which together with \eqref{eq3-19} implies
\begin{equation}\label{eq3-21}
\lim\limits_{t\to \infty}\|u-u^*\|_{L^\infty}=0.
\end{equation}
The same argument gives us that
\begin{equation}\label{eq3-22}
\lim\limits_{t\to \infty}(\|v-v^*\|_{L^\infty}+\|w-w^*\|_{L^\infty})=0.
\end{equation}
{\bf Step 2.}
In this step, we shall show that the convergence rate is exponential. In fact, \eqref{eq3-21} implies there exists $t_1>1$ such that
\begin{equation*}
\frac{u^*}{2}\leq \eta=[\theta u+(1-\theta)u^*]\leq \frac{3u^*}{2},\,\, \mbox{ for all }\, t\geq t_1.
\end{equation*}
Then by \eqref{TL-1}, one has
\begin{equation}\label{eq3-23}
\frac{2}{9u^*}(u-u^*)^2\leq u-u^{*}-u^{*}\ln\frac{u}{u^{*}}\leq \frac{2}{u^*}(u-u^*)^2.
\end{equation}
Similarly, using \eqref{eq3-22}, there exist two constants $t_2>1$  and $t_3>1$ such that
\begin{equation}\label{eq3-24}
\frac{2}{9v^*}(u-u^*)^2\leq v-v^{*}-v^{*}\ln\frac{v}{v^{*}}\leq \frac{2}{v^*}(v-v^*)^2, \mbox{ for all } t>t_2,
\end{equation}
and
\begin{equation}\label{eq3-25}
\frac{2}{9w^*}(w-w^*)^2\leq w-w^{*}-w^{*}\ln\frac{w}{w^{*}}\leq \frac{2}{w^*}(w-w^*)^2, \mbox{ for all } t>t_3.
\end{equation}
By the definition of $\mathcal{E}_1(t)$ and $\mathcal{F}_1(t)$ along with \eqref{eq3-23}, \eqref{eq3-24} and \eqref{eq3-25}, we can find two positive constants $\alpha_1$ and $\alpha_2$ such that
\begin{equation}\label{eq3-26}
\alpha_1 \mathcal{F}_{1}(t)\leq\mathcal{E}_1(t)\leq \alpha_2 \mathcal{F}_{1}(t)
\end{equation}
for some $t\geq \bar t=\max\{t_1,t_2, t_3\}.$   Then the combination of \eqref{EF1t} and \eqref{eq3-26} gives
\begin{equation*}
\frac{d}{dt}\mathcal{E}_1(t)+\frac{\alpha}{\alpha_2} \mathcal{E}_1(t)\leq 0, \ \mathrm{for\ all}\ \ t\geq \bar t,
\end{equation*}
which implies for all $t\geq \bar {t}$
\begin{equation}\label{eq3-27}
\mathcal{E}_1(t)\leq \mathcal{E}_1(\bar t)e^{-\frac{\alpha}{\alpha_2}(t-\bar t)}\leq C_3 e^{-\frac{\alpha}{\alpha_2}t}.
\end{equation}
Then it follows from  \eqref{eq3-26}-\eqref{eq3-27} that
\begin{equation*}
\mathcal{F}_{1}(t)\leq \frac{1}{\alpha_1}\mathcal{E}_1(t)\leq \frac{C_3}{\alpha_1}e^{-\frac{\alpha}{\alpha_2}t}, \mbox{ for all } t\geq \bar t,
\end{equation*}
which, alongside the definition of $\mathcal{F}_1(t)$, gives
\begin{equation}\label{eq3-28}
  \|u-u^{*}\|_{L^2}^2+\|v-v^{*}\|_{L^2}^2+\|w-w^{*}\|_{L^2}^2
  \leq \frac{C_3}{\alpha_1}e^{-\frac{\alpha}{\alpha_2}t}, \mbox{ for all } t\geq \bar t.
\end{equation}
Then combining \eqref{eq3-20*} and \eqref{eq3-28}, one can find there exists two positive constant $C_4$ and $\alpha_3$ such that
\begin{equation} \label{eq3-29}
\|u(\cdot,t)-u^*\|_{L^\infty}\leq C_4 e^{-\alpha_3 t},\,\, \mbox{ for all } t\geq \bar t.
\end{equation}
 Similarly, it holds that
\begin{equation}\label{eq3-30}
\|v(\cdot,t)-v^*\|_{L^\infty}+\|w(\cdot,t)-w^*\|_{L^\infty}\leq C_5 e^{-\alpha_4 t},\,\, \ \mathrm{for \ all}\ \ t\geq \bar t.
\end{equation}
Combining  \eqref{eq3-29} and \eqref{eq3-30}, gives \eqref{GS1-C4} and hence completes  the proof.
\end{proof}

\subsection{Global stability for $b_3>0,\,\,c_3>0$}

In this subsection, we investigate the large time behavior of solutions for the system \eqref{pd-2-1} with  $b_3>0$ and $c_3>0.$ For simplicity, we assume $c_3=1.$ We underline  from Remark \ref{Re3.6}  that $\|v\|_{L^\infty}$ is bounded by a constant independent of $b_1$ as $b_1$ is small. Hence if $b_1>0$  and $b_3>0$ are suitably small, $1-b_3-b_1\|v\|_{L^\infty}>0$ is warranted.
To derive the stability in the case of $b_3, c_3>0,$  we first derive a lower bound estimate for $u.$
\begin{lemma}\label{le4-0}
Let $(u, v, w)$ be the unique global bounded classical solution of \eqref{pd-2-1}. Let  $b_1>0$ and $b_3>0$ be sufficiently small such that
$
 1-b_3-b_1 \|v\|_{L^\infty}>0.
$
Then there exists a constant $t_0\in (0, \infty)$ such that
\begin{eqnarray}\label{eq4.0-1}
u(x, t)\geq \min\{\bar u, 1-b_3-b_1 \|v\|_{L^\infty}\}\,\,\, \mbox{ for all } (x, t) \in \bar \Omega \times (t_0, \infty),
\end{eqnarray}
and
\begin{eqnarray}\label{eq4.0-2}
\mathop{\lim \inf}\limits_{t\to \infty}u(x, t)\geq 1-b_3-b_1 \|v\|_{L^\infty} \mbox{ for all } x \in \bar \Omega,
\end{eqnarray}
where $\bar u=\mathop{\min}\limits_{x\in \bar\Omega}u(x, t_0).$
\end{lemma}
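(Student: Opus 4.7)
The plan is to reduce the problem to a scalar logistic ODE via a parabolic comparison. Since $0 < \frac{w}{u+w} \le 1$ and $v(x,t) \le \|v\|_{L^\infty}$, the first equation of \eqref{pd-2-1} yields the pointwise differential inequality
\[
u_t - d_1 \Delta u \;\ge\; u\bigl(1 - b_3 - b_1\|v\|_{L^\infty} - u\bigr) \;=\; u(A - u),
\]
where $A := 1 - b_3 - b_1\|v\|_{L^\infty} > 0$ is positive by the smallness assumption on $b_1,b_3$. I would fix any $t_0 > 0$ and set $\bar u := \min_{x \in \bar\Omega} u(x,t_0)$, which is strictly positive because $u > 0$ pointwise by the strong maximum principle invoked in Lemma \ref{LS}, and the minimum is attained thanks to the continuity of $u(\cdot, t_0)$ on $\bar\Omega$.

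Next I would introduce the spatially homogeneous function $\phi(t)$ solving the logistic Cauchy problem
\[
\phi'(t) = \phi(t)\bigl(A - \phi(t)\bigr), \qquad \phi(t_0) = \bar u.
\]
A one-line phase-line analysis gives $\phi(t) \ge \min\{\bar u, A\}$ for every $t \ge t_0$ and $\phi(t) \to A$ as $t \to \infty$, regardless of whether $\bar u$ lies above or below $A$: if $\bar u \ge A$ the solution decreases monotonically to $A$, and if $\bar u < A$ it increases monotonically to $A$.

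The last step is the parabolic comparison between $u$ and $\phi$. Setting $\psi := u - \phi$ and $c(x,t) := A - u(x,t) - \phi(t)$, which is bounded on $\bar\Omega \times [t_0,\infty)$ thanks to Theorem \ref{GB}, one checks that
\[
\psi_t - d_1 \Delta \psi - c(x,t)\,\psi \;\ge\; 0, \qquad \partial_\nu \psi\bigr|_{\partial\Omega} = 0, \qquad \psi(\cdot,t_0) \ge 0.
\]
The standard parabolic maximum principle for linear operators with bounded zero-order coefficient then forces $\psi \ge 0$, i.e.\ $u(x,t) \ge \phi(t)$ on $\bar\Omega \times [t_0,\infty)$. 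Combining this with the ODE bounds above immediately yields \eqref{eq4.0-1} and \eqref{eq4.0-2}. I do not foresee a real obstacle here: everything reduces to the positivity $A>0$ (supplied by hypothesis) and the time-independent bound $\|v\|_{L^\infty} \le K_9$ coming from Lemma \ref{LIV}, which is crucial so that the coefficient $A$ does not itself drift with time.
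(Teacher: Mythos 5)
Your proof is correct and takes essentially the same route as the paper: reduce the first equation to the differential inequality $u_t - d_1\Delta u \ge u(A-u)$ with $A=1-b_3-b_1\|v\|_{L^\infty}>0$, compare with the spatially constant logistic ODE solution, and conclude via the parabolic maximum principle. The only cosmetic difference is that the paper routes the comparison through an intermediate solution $U^0$ of the full Neumann problem for the logistic PDE (so $\tilde u \le U^0 \le u$), whereas you compare $u$ directly against the ODE solution $\phi$ by applying the maximum principle to $\psi=u-\phi$; both are standard and equally valid.
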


\begin{proof}
By the  strong maximum principle applied to the first equation in \eqref{pd-2-1}, there exists  a constant  $t_0>0$ such that
$\mathop{\min}\limits_{x\in \bar\Omega}u(x, t_0)=\bar u>0$. For convenience, we denote $\mathcal{K}=1-b_1 \|v\|_{L^\infty}-b_3.$
 Then we consider the following problem
\begin{equation}\label{eq4.1}
\begin{cases}
u_t-d_1 \Delta u=u\left(1-u-b_1 v- \frac{b_3 w}{u+w}\right)\geq u(\mathcal{K}-u),&x\in\Omega, t>t_0,\\
\frac{\partial u}{\partial \nu}=0,&x\in\partial\Omega, t>t_0,\\
u|_{t=t_0}=u(x, t_0), &x \in \Omega.
\end{cases}
\end{equation}
Let $\tilde u(t)$ be the solution of the following ODE problem
\begin{equation*}\label{eq4.2}
\begin{cases}
\frac{d\tilde u(t)}{dt}=\tilde u(\mathcal{K}-\tilde u),& t>t_0,\\
\tilde u(t_0)=\bar u>0,
\end{cases}
\end{equation*}
which has the explicit solution $\tilde u(t)=\frac{\mathcal{K}}{1+(\frac{\mathcal{K}}{\bar u}-1)e^{-\mathcal{K}(t-t_0)}}$ such that
$$
\tilde u(t)\geq \min\{\bar u, \mathcal{K}\},\,\, t>t_0.
$$
Recall that $u(x, t_0)\geq \bar u.$ Then $\tilde{u}$ is a lower solution of the following PDE problem
\begin{equation}\label{eq4.3}
\begin{cases}
U_t^0-d_1 \Delta U^0= U^0(\mathcal{K}-U^0),&x\in\Omega, t>t_0,\\
\frac{\partial U^0}{\partial \nu}(x, t)=0,&x\in\partial\Omega, t>t_0,\\
U^0(x, t_0)=u(x, t_0),& x\in \Omega
\end{cases}
\end{equation}
and consequently
\begin{eqnarray}\label{eq4.4}
\tilde u(t)\leq U^0(x, t)\,\, \mbox{ for all } (x, t) \in \bar \Omega \times (t_0, \infty).
\end{eqnarray}
Applying the comparison principle to  \eqref{eq4.1} and \eqref{eq4.3}, and using \eqref{eq4.4}, one has
\begin{eqnarray*}\label{eq4.5}
\min\{\bar u, \mathcal{K}\} \leq \tilde u(t) \leq U^0(x, t)\leq u(x, t)\,\, \mbox{ for all } (x, t) \in \bar \Omega \times (t_0, \infty),
\end{eqnarray*}
which indicates \eqref{eq4.0-1} and \eqref{eq4.0-2}.
\end{proof}

Next, we shall show that the coexistence steady state $(u_*,v_*,w_*)$ is globally asymptotically stable under some conditions. Similarly, we introduce the following energy functional:
\begin{eqnarray}\label{eq4-1}
\mathcal{E}_2(t)=\mathcal{J}_u(t)+\mathcal{J}_v(t)+\mathcal{J}_w(t),
\end{eqnarray}
where
$$
\mathcal{J}_s(t)=\int_\Omega \left(s-s_*-s_*\ln \frac{s}{s_*}\right), \ s\in \{u,v,w\},
$$
and $(u_*, v_*, w_*)=(u_1^*, v_1^*, w_1^*)$ is defined in \eqref{u1star}. Then we have the following results.
\begin{lemma}\label{GS2}
Let $(u,v,w)$ be the solution of \eqref{pd-2-1}, and $(u_*, v_*, w_*)=(u_1^*, v_1^*, w_1^*)$ be the coexistence steady state  defined in \eqref{u1star}. If \eqref{condition3-s1} holds, then there exist $\xi_2>0,\chi_2>0$ such that the solution $(u,v,w)$ will exponentially converge to $(u_*, v_*, w_*)$ in $L^\infty$-norm as $t \to \infty$ whenever $0<\xi<\xi_2$ and $0<\chi<\chi_2$.
\end{lemma}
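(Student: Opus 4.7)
The plan is to imitate the Lyapunov strategy of Lemma~\ref{le3-3}, but with the functional $\mathcal{E}_2(t)$ in \eqref{eq4-1} adapted to the intraguild coexistence state and with extra care for the ratio-dependent nonlinearity. Computing $\frac{d}{dt}\mathcal{E}_2(t)$ along solutions of \eqref{pd-2-1} (with $c_3=1$) and using the steady-state relations \eqref{ce-1}, one gets, in complete analogy with \eqref{eq3-15},
\begin{equation*}
\frac{d}{dt}\mathcal{E}_2(t)=-\int_\Omega X A_2 X^T-\int_\Omega Y B_2 Y^T,\qquad X=(u-u_*,v-v_*,w-w_*),\ Y=\Bigl(\tfrac{\nabla u}{u},\tfrac{\nabla v}{v},\tfrac{\nabla w}{w}\Bigr),
\end{equation*}
where $B_2$ has exactly the same form as $B_1$ in Lemma~\ref{le3-3} (with $u^*,v^*,w^*$ replaced by $u_*,v_*,w_*$), and $A_2$ is obtained from $A_1$ by adding the contributions coming from the two increments
\begin{equation*}
\frac{w}{u+w}-\frac{w_*}{u_*+w_*}=\frac{u_*(w-w_*)-w_*(u-u_*)}{(u+w)(u_*+w_*)},\quad \frac{u}{u+w}-\frac{u_*}{u_*+w_*}=\frac{w_*(u-u_*)-u_*(w-w_*)}{(u+w)(u_*+w_*)}.
\end{equation*}

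Next I would verify that under the hypotheses in \eqref{condition3-s1} the matrix $A_2$ is positive definite uniformly in $(x,t)$ for $t$ large, and that $B_2$ is positive semi-definite for $\xi,\chi$ small enough. The positivity of $B_2$ is obtained verbatim as in Lemma~\ref{le3-3}, yielding the thresholds $\xi_2,\chi_2$. For $A_2$, note that when $b_3=0$, $A_2$ collapses to the food-chain matrix $A_1$, which is positive definite as soon as $(b_1-1)^2+(b_2-1)^2<4$. With $b_1$ small this reduces essentially to $(b_2-1)^2<3$, a condition comfortably satisfied by $\tfrac{1}{10}\le b_2<\sqrt 2$. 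The $b_3$-contributions to $A_2$ involve the factor $\frac{1}{(u+w)(u_*+w_*)}$, which by Lemma~\ref{le4-0} and Theorem~\ref{GB} is bounded above and below by positive constants for $t\ge t_0$; these contributions are linear in $b_3$, so taking $b_3$ sufficiently small keeps $A_2$ a small perturbation of $A_1$ and hence uniformly positive definite.

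With the dissipation inequality $\frac{d}{dt}\mathcal{E}_2(t)+\alpha\mathcal{F}_2(t)\le 0$ in hand, where $\mathcal{F}_2(t)=\|u-u_*\|_{L^2}^2+\|v-v_*\|_{L^2}^2+\|w-w_*\|_{L^2}^2$, the remaining argument is identical to Lemma~\ref{le3-3}: non-negativity of $\mathcal{E}_2(t)$ via the elementary inequality $s-s_*-s_*\ln(s/s_*)\ge 0$, integrability of $\mathcal{F}_2$ on $(t_0,\infty)$, uniform continuity of $\mathcal{F}_2$ from the H\"older bounds in Lemma~\ref{le3-2}, an application of Barb\u alat's Lemma~\ref{barbalet} to get $\mathcal{F}_2(t)\to 0$, and Gagliardo--Nirenberg interpolation with the uniform $W^{1,\infty}$ bound to upgrade to $L^\infty$ convergence. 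The exponential rate then follows by the Step~2 Taylor argument: once the solution enters a neighbourhood of $(u_*,v_*,w_*)$, one has $\alpha_1\mathcal{F}_2\le\mathcal{E}_2\le\alpha_2\mathcal{F}_2$, so $\mathcal{E}_2(t)\le\mathcal{E}_2(\bar t)e^{-(\alpha/\alpha_2)(t-\bar t)}$ and hence the desired estimate in Theorem~\ref{GS1}.

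The main obstacle is the analysis of $A_2$: the ratio-dependent increments produce both new diagonal and new off-diagonal entries coupling $u-u_*$ with $w-w_*$, and these coefficients depend on $(u,w)$ through $1/[(u+w)(u_*+w_*)]$. Without Lemma~\ref{le4-0}, the factor $1/(u+w)$ could degenerate and destroy positive definiteness; with it, and with $b_3$ small enough (and Remark~\ref{Re3.6} keeping $\|v\|_{L^\infty}$ independent of $b_1$ so that the hypothesis of Lemma~\ref{le4-0} is met), the perturbation analysis goes through. The specific requirement $b_2<\sqrt 2$ enters precisely to leave enough room for the $b_3$-perturbation of the $(b_2-1)^2$-term in $\det A_1$; the lower bound $b_2\ge \tfrac{1}{10}$ is a convenient quantitative floor used when estimating the size of the perturbation.
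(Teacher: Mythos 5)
Your overall strategy is the right one and matches the paper: compute $\frac{d}{dt}\mathcal{E}_2$, arrive at the quadratic-form identity with matrices $A_2,B_2$, treat $B_2$ exactly as $B_1$ to fix $\xi_2,\chi_2$, and close with Barb\u alat plus a Taylor expansion for the rate. However, there is a concrete error in how you handle $A_2$.

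You assert that when $b_3=0$ the matrix $A_2$ ``collapses to the food-chain matrix $A_1$.'' This is false. The food-chain reduction requires \emph{both} $b_3=0$ and $c_3=0$; here $c_3=1$, and the term $c_3\frac{uw}{u+w}$ in the $w$-equation still contributes, after using the steady-state relation $1-w_*+v_*+\frac{u_*}{u_*+w_*}=0$, the entries $-\frac{w_*}{2(u_*+w_*)(u+w)}$ in positions $(1,3)$, $(3,1)$ and $\frac{u_*}{(u_*+w_*)(u+w)}$ in position $(3,3)$ of $A_2$ even at $b_3=0$. So the base case is the genuinely different matrix
\begin{equation*}
\tilde{A}_2=
\begin{pmatrix}
1&-\tfrac{1}{2}&-\tfrac{\bar w_*}{2(\bar u_*+\bar w_*)(u+w)}\\
-\tfrac{1}{2}&1&\tfrac{b_2-1}{2}\\
-\tfrac{\bar w_*}{2(\bar u_*+\bar w_*)(u+w)}&\tfrac{b_2-1}{2}&1+\tfrac{\bar u_*}{(\bar u_*+\bar w_*)(u+w)}
\end{pmatrix},
\end{equation*}
obtained after \emph{also} setting $b_1=0$. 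One must then compute its determinant explicitly,
\begin{equation*}
4|\tilde{A}_2|=3-(b_2-1)^2-\frac{(\bar w_*)^2}{(\bar u_*+\bar w_*)^2(u+w)^2}+\frac{3\bar u_*+(b_2-1)\bar w_*}{(\bar u_*+\bar w_*)(u+w)},
\end{equation*}
and use the lower bound $u+w\ge \sqrt2/2$ (from Lemma~\ref{le4-0}) to control the negative middle term, and $\bar u_*=1$ and $b_2\ge\frac{1}{10}$ to make $3\bar u_*+(b_2-1)\bar w_*\ge0$ (it is strictly negative near $b_2=0$). Only after establishing positivity of $\tilde{A}_2$ does the continuity argument in $b_1,b_3$ (via Remark~\ref{Re5.1}) kick in. Consequently your explanation that $b_2\ge\frac{1}{10}$ serves merely to control the size of the $b_3$-perturbation is wrong: that bound is essential already at $b_3=0$, to tame the ratio-dependent coupling arising from $c_3=1$, and without the explicit determinant computation your perturbation-of-$A_1$ argument has no valid starting point.
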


\begin{proof}
Using the first equation of \eqref{pd-2-1} and the
fact that $1-u_*-b_1 v_*-b_3\frac{w_*}{u_*+w_*}=0$ in \eqref{ce-1},  we derive
\begin{equation}\label{eq4-4}
\begin{split}
\frac{d}{dt} \mathcal{J}_u(t)
&= \int_\Omega \left(1-\frac{u_*}{u}\right)u_t\\
&=-d_1 u_*\int_\Omega \frac{|\nabla u|^2}{u^2}+\int_\Omega(u-u_*)\left(1-u-b_1v-\frac{b_3 w}{u+w}\right)\\
&=-d_1 u_*\int_\Omega \frac{|\nabla u|^2}{u^2}-\int_\Omega(u-u_*)^2-b_1\int_\Omega(u-u_*)(v-v_*)\\
&\ \ \ \ -b_3\int_\Omega \frac{u_*}{(u_*+w_*)(u+w)}(u-u_*)(w-w_*)+b_3\int_\Omega \frac{w_*}{(u_*+w_*)(u+w)}(u-u_*)^2.
\end{split}
\end{equation}
With $1-v_*+u_*-b_2w_*=0$  in \eqref{ce-1}, we can use the second equation of \eqref{pd-2-1} to derive that
\begin{equation}\label{eq4-5}
\begin{split}
\frac{d}{dt} \mathcal{J}_v(t)
& = \int_\Omega \left(1-\frac{v_*}{v}\right)v_t\\
&=-d_2 v_*\int_\Omega \frac{|\nabla v|^2}{v^2}+\xi v_*\int_\Omega\frac{\nabla u\cdot \nabla v}{v}+\int_\Omega (v-v_*)(1-v+u-b_2w)\\
&=-d_2 v_*\int_\Omega \frac{|\nabla v|^2}{v^2}+\xi v_*\int_\Omega\frac{\nabla u\cdot \nabla v}{v}-\int_\Omega(v-v_*)^2+\int_\Omega(u-u_*)(v-v_*)\\
&\ \ \ \ -b_2\int_\Omega(v-v_*)(w-w_*).
\end{split}
\end{equation}
Similarly, using the third equation of \eqref{pd-2-1} with $c_3=1$ and the fact $1-w_*+v_*+\frac{u_*}{u_*+ w_*}=0$  in \eqref{ce-1}, we obtain
\begin{equation}\label{eq4-6}
\begin{split}
&\frac{d}{dt} \mathcal{J}_w(t)
 =  \int_\Omega \left(1-\frac{w_*}{w}\right)w_t\\
&=-w_*\int_\Omega \frac{|\nabla w|^2}{w^2}+\chi w_*\int_\Omega\frac{v \nabla u\cdot \nabla w+u\nabla v\cdot \nabla w}{w}+\int_\Omega (w-w_*)\left(1-w+v+\frac{u}{u+w}\right)\\
&=-w_*\int_\Omega \frac{|\nabla w|^2}{w^2}+\chi w_*\int_\Omega\frac{v \nabla u\cdot \nabla w+u\nabla v\cdot \nabla w}{w}
-\int_\Omega (w-w_*)^2+\int_\Omega (v-v_*)(w-w_*)\\
&\ \ \ \ +\int_\Omega\frac{w_*}{(u_*+w_*)(u+w)}(u-u_*)(w-w_*)-\int_\Omega\frac{u_*}{(u_*+w_*)(u+w)}(w-w_*)^2.
\end{split}
\end{equation}
Using the definition of $\mathcal{E}_2(t)$ in \eqref{eq4-1}, and the identities   \eqref{eq4-4}-\eqref{eq4-6}, we have
\begin{eqnarray}\label{eq4-7}
\frac{d}{dt} \mathcal{E}_2(t)=-\int_\Omega X_1 A_2 X_1^T-\int_\Omega Y_1 B_2 Y_1^T,
\end{eqnarray}
where $X_1=(u-u_*, v-v_*, w-w_*)$, $Y_1=\left(\frac{\nabla u}{u}, \frac{\nabla v}{v}, \frac{\nabla w}{w}\right)$ and $A_2, B_2$ are symmetric
matrices denoted by
\begin{equation*}
A_2:=
\begin{pmatrix}
1-\frac{b_3w_*}{(u_*+w_*)(u+w)}&\frac{b_{1}-1}{2}&\frac{b_3u_*-w_*}{2(u_*+w_*)(u+w)}\\
\frac{b_1-1}{2}&1&\frac{b_2-1}{2}\\
\frac{b_3u_*-w_*}{2(u_*+w_*)(u+w)}&\frac{b_2-1}{2}&1+\frac{u_*}{(u_*+w_*)(u+w)}
\end{pmatrix}, \ \
B_2:=
\begin{pmatrix}
d_1 u_{*}&-\frac{\xi v_* u}{2}&-\frac{\chi w_*uv}{2}\\
-\frac{\xi v_* u}{2}&d_2v_*&-\frac{\chi w_*uv}{2}\\
-\frac{\chi w_*uv}{2}&-\frac{\chi w_*uv}{2}&w_*
\end{pmatrix}.
\end{equation*}
Recalling that $\|u\|_{L^\infty}$ and $\|v\|_{L^\infty}$ are independent of parameters $\xi$ and $\chi$ (see Theorem \ref{GB}), one can find appropriate numbers $\xi_2>0$ and $\chi_2>0$ such that if $\xi\in(0,\xi_2)$ and $\chi \in (0,\chi_2)$ then
$$4d_1 d_2u_* v_*> \xi\chi^2v_*w_*\|u\|_{L^\infty}^3\|v\|_{L^\infty}^2+\chi^2w_*(d_2v_*+d_1 u_*)\|u\|_{L^\infty}^2 \|v\|_{L^\infty}^2+\xi^2 (v_*)^2\|u\|_{L^\infty}^2$$
which gives rises to
\begin{equation*}
  \begin{vmatrix}
 d_1 u_{*}&-\frac{\xi v_* u}{2}\\
-\frac{\xi v_* u}{2}&d_2v_*
  \end{vmatrix}=\frac{v_*(4d_1 d_2u_*-\xi^2v_*u^2)}{4}\geq \frac{v_*(4d_1 d_2u_*-\xi^2v_*\|u\|_{L^\infty}^2)}{4}> 0,
\end{equation*}
and		
\begin{equation*}
\begin{split}
|B_2|&=-\frac{w_*}{4}\left[\xi\chi^2v_*w_*u^3v^2+\chi^2w_*(d_1 u_*+d_2v_*)u^2v^2+\xi^2 v_*^2u^2-4d_1 d_2u_* v_*\right]\\
&\geq -\frac{w_*}{4}\left[\xi\chi^2v^*w_*\|u\|_{L^\infty}^3\|v\|_{L^\infty}^2+\chi^2w_*(d_1 u_*+d_2v_*)\|u\|_{L^\infty}^2 \|v\|_{L^\infty}^2+\xi^2 (v_*)^2\|u\|_{L^\infty}^2-4d_1 d_2u_* v_*\right]\\
&> 0.
\end{split}
\end{equation*}
These imply that the matrix $B_2$ is positive definite and hence
\begin{eqnarray}\label{eq4-8}
Y_1 B_2 Y_1^T> 0.
\end{eqnarray}
Next, we  claim
\begin{itemize}
\item If $\frac{1}{10}\leq b_2<\sqrt 2$ and $b_1 = b_3=0$, then the matrix $A_2$ is positive definite.
\end{itemize}
In fact, if $b_1=b_3=0$ and $0<b_2<\sqrt{2}$, one can check that the system \eqref{pd-2-1} with $c_3=1$ has a unique positive coexistence steady state $(\bar u_*,\bar v_*, \bar w_*)$ satisfying (see also \eqref{solu} in the Appendix)
\begin{equation}\label{ce-00}
\begin{cases}
\bar u_*=1,\\
\bar v_*=\frac{b_2^2+2b_2+4-b_2 \sqrt{(b_2+2)(b_2+10)}}{2(b_2+1)},\\
\bar w_*=\frac{2-b_2+\sqrt{(b_2+2)(b_2+10)}}{2(b_2+1)}.
\end{cases}
\end{equation}
Moreover, when $b_1=b_3=0$, the corresponding matrix $A_2$ becomes
\begin{equation*}
\tilde{A}_2:=
\begin{pmatrix}
1&-\frac{1}{2}&-\frac{\bar w_*}{2(\bar u_*+\bar w_*)(u+w)}\\
-\frac{1}{2}&1&\frac{b_2-1}{2}\\
-\frac{\bar w_*}{2(\bar u_*+\bar w_*)(u+w)}&\frac{b_2-1}{2}&1+\frac{\bar u_*}{(\bar u_*+\bar w_*)(u+w)}
\end{pmatrix},
\end{equation*}
which is positive definite if $|\tilde{A}_2|>0.$ After some calculations, one can check that
\begin{equation}\label{ce-00-1}
4|\tilde{A}_2|=3-(b_2-1)^2-\frac{(\bar w_*)^2}{(\bar u_*+\bar w_*)^2(u+w)^2}+\frac{3\bar u_*+(b_2-1)\bar w_*}{(\bar u_*+\bar w_*)(u+w)}.
\end{equation}
From Lemma \ref{le4-0}, we know that if $b_1=b_3=0$, it holds that
\begin{equation*}
\liminf\limits_{t\to\infty} u(x,t)\geq 1,
\end{equation*}
which implies there exists  $T_1>0$ such that $u(x,t)\geq \frac{\sqrt{2}}{2}$ for all $t\geq T_1$ and hence
\begin{equation}\label{ce-00-2}
u(x,t)+w(x,t)\geq \frac{\sqrt{2}}{2} \ \ \mathrm{for\  all}\ \ x\in\Omega \ \mathrm{and}\ t\geq T_1.
\end{equation}
 With \eqref{ce-00-2} in hand and using the facts  $\bar u_*=1$ and $0<b_2<\sqrt{2}$, we can directly calculate that
 \begin{equation}\label{ce-00-3}
 \begin{split}
 3-(b_2-1)^2-\frac{(\bar w_*)^2}{(\bar u_*+\bar w_*)^2(u+w)^2}
 \geq 3-(b_2-1)^2-2\frac{(\bar w_*)^2}{(1+\bar w_*)^2}
 \geq 1-(b_2-1)^2>0.
 \end{split}
 \end{equation}
On the other hand,  if $b_2\geq \frac{1}{10}$, one can derive that
 \begin{equation*}
 \begin{split}
3\bar u_*+(b_2-1)\bar w_*
=3+(b_2-1)\bar w_*
=\frac{9b_2+4-b_2^2+(b_2-1)\sqrt{(b_2+2)(b_2+10)}}{2(b_2+1)}>0,
\end{split}
 \end{equation*}
 and hence
 \begin{equation}\label{ce-00-4}
 \frac{3\bar u_*+(b_2-1)\bar w_*}{(\bar u_*+\bar w_*)(u+w)}>0.
 \end{equation}
Combining \eqref{ce-00-1}, \eqref{ce-00-3} and \eqref{ce-00-4}, one finds that  the matrix $\tilde{A}_2$ is positive definite if $\frac{1}{10}\leq b_2<\sqrt{2}$. By the continuity of $(u_*, v_*, w_*)$ with respect to $b_1$  and $b_3$  (see Remark \ref{Re5.1}), if $b_1$  and $b_3$ are small enough and $\frac{1}{10}\leq b_2<\sqrt{2}$, $A_2$ is positive definite for all $t\geq T_1$. Then there exists a constant $\beta >0$ such that
 \begin{equation}\label{eq4-9}
X_1 A_2 X_1^T\geq \beta X_1^2\,\, \mbox{ for all } t\geq T_1.
 \end{equation}
Combining \eqref{eq4-7}, \eqref{eq4-8} and \eqref{eq4-9}, one has
\begin{equation*}\label{eq4-10}
\frac{d}{dt}\mathcal{E}_2(t)+ \beta\mathcal{F}_2(t)\leq 0\,\, \ \mathrm{for\ all}\ \ t\geq T_1,
\end{equation*}
where
\begin{equation*}\label{F2t}
\mathcal{F}_2(t):=\int_\Omega (u-u_*)^2+(v-v_*)^2+(w-w_*)^2.
\end{equation*}
Using the similar argument as in the proof of Lemma \ref{le4-0},  we can show
\begin{equation*}
\|u(\cdot,t)-u_*\|_{L^\infty}+\|v(\cdot,t)-v_*\|_{L^\infty}+\|u(\cdot,t)-u_*\|_{L^\infty}\leq C e^{-\sigma t} \mathrm{\ for \ all} \  t>T_2
\end{equation*}
hold for some positive constants $C, \sigma$ and $T_2.$
\end{proof}
\begin{proof}[Proof of Theorem \ref{GS} and Theorem \ref{GS1}] Theorem \ref{GS} is a consequence of Lemma \ref{le3-3} and  Theorem \ref{GS1} results from Lemma \ref{GS2}.
\end{proof}

%
%


\section{Appendix}
The homogeneous coexistence steady state $(u_*,v_*,w_*)$ in \eqref{pd-2-1} satisfies the following equations
\begin{equation*}
    \begin{cases}
    1-u_*-b_1 v_*-b_3 \frac{w_*}{u_*+w_*}=0,\\
    1-v_*+u_*-b_2 w_*=0,\\
    1-w_*+v_*+\frac{u_*}{u_*+w_*}=0,\\
    \end{cases}
    \end{equation*}
which has two explicit solutions
$(u_1^*, v_1^*, w_1^*)$ and $(u_2^*, v_2^*, w_2^*)$ as follows:

\begin{equation}\label{u1star}
\begin{cases}
u_1^* =& \frac{2 b_1^2 ( b_2^2-1) +
 b_1 ( b_2+2) (b_2 - 3 b_3 + 2 b_2 b_3) + (1 + b_2) (2 + ( b_2-2) b_3 -
    b_3^2)}{2 (1 + b_1 + b_2) (1 + b_1 (1 + b_2) + (2 + b_2) b_3)}
\\[2mm]
&+\frac{(b_1b_2+b_2b_3+b_3) \sqrt{
        20 + 12 b_2 + b_2^2 + 4 b_1^2 ( b_2^2-1) +
         4 b_1 (3 + b_2) (b_2 - b_3) - 20 b_3 - 14 b_2 b_3 + b_3^2}}{2 (1 + b_1 + b_2) (1 + b_1 (1 + b_2) + (2 + b_2) b_3)},\\
 \\
v_1^*=&\frac{4 + 2 b_2 + b_2^2 + 2 b_3 - 4 b_2 b_3 - 4 b_2^2 b_3 - b_3^2 -
   2 b_1 (1 + b_2) ( b_2 + b_3-2)}
     {2 (1 + b_1 +b_2) (1 + b_1 (1 + b_2) + (2 + b_2) b_3)}\\[2mm]
   &+  \frac{
   (b_3-b_2) \sqrt{20 + 12 b_2 + b_2^2 + 4 b_1^2 ( b_2^2-1) +
     4 b_1 (3 + b_2) (b_2 - b_3) - 20 b_3 - 14 b_2 b_3 + b_3^2}}
     {2 (1 + b_1 +b_2) (1 + b_1 (1 + b_2) + (2 + b_2) b_3)},\\
     \\
w_1^*=&\frac{2 + 4 b_1 + 2 b_1^2 - b_2 + 5 b_1 b_2 + 2 b_1^2 b_2 + 9 b_3 + 5 b_1 b_3 +
   7 b_2 b_3 + 2 b_1 b_2 b_3 -
   b_3^2 }
      {2 (1 + b_1 + b_2) (1 +
     b_1 (1 + b_2) + (2 + b_2) b_3)}  \\[2mm]
   &+\frac{(1 + b_1 + b_3) \sqrt{
    20 + b_2^2 + 4 b_1^2 ( b_2^2-1) + 4 b_1 ( b_2+3) (b_2 - b_3) - 20 b_3 +
      b_3^2 - 2 b_2 ( 7 b_3-6)}}
      {2 (1 + b_1 + b_2) (1 +
     b_1 (1 + b_2) + (2 + b_2) b_3)},  \\
\end{cases}
\end{equation}

and

\begin{equation}\label{u2star}
\begin{cases}
u_2^* =& \frac{2 b_1^2 ( b_2^2-1) +
 b_1 ( b_2+2) (b_2 - 3 b_3 + 2 b_2 b_3) + (1 + b_2) (2 + ( b_2-2) b_3 -
    b_3^2)}{2 (1 + b_1 + b_2) (1 + b_1 (1 + b_2) + (2 + b_2) b_3)}
\\[2mm]
&-\frac{(b_1b_2+b_2b_3+b_3) \sqrt{
        20 + 12 b_2 + b_2^2 + 4 b_1^2 ( b_2^2-1) +
         4 b_1 (3 + b_2) (b_2 - b_3) - 20 b_3 - 14 b_2 b_3 + b_3^2}}{2 (1 + b_1 + b_2) (1 + b_1 (1 + b_2) + (2 + b_2) b_3)},\\
 \\
v_2^*=&\frac{4 + 2 b_2 + b_2^2 + 2 b_3 - 4 b_2 b_3 - 4 b_2^2 b_3 - b_3^2 -
   2 b_1 (1 + b_2) ( b_2 + b_3-2)}
     {2 (1 + b_1 +b_2) (1 + b_1 (1 + b_2) + (2 + b_2) b_3)}\\[2mm]
   &-  \frac{
   (b_3-b_2) \sqrt{20 + 12 b_2 + b_2^2 + 4 b_1^2 ( b_2^2-1) +
     4 b_1 (3 + b_2) (b_2 - b_3) - 20 b_3 - 14 b_2 b_3 + b_3^2}}
     {2 (1 + b_1 +b_2) (1 + b_1 (1 + b_2) + (2 + b_2) b_3)},\\
     \\
w_2^*=&\frac{2 + 4 b_1 + 2 b_1^2 - b_2 + 5 b_1 b_2 + 2 b_1^2 b_2 + 9 b_3 + 5 b_1 b_3 +
   7 b_2 b_3 + 2 b_1 b_2 b_3 -
   b_3^2 }
      {2 (1 + b_1 + b_2) (1 +
     b_1 (1 + b_2) + (2 + b_2) b_3)}  \\[2mm]
   &-\frac{(1 + b_1 + b_3) \sqrt{
    20 + b_2^2 + 4 b_1^2 ( b_2^2-1) + 4 b_1 ( b_2+3) (b_2 - b_3) - 20 b_3 +
      b_3^2 - 2 b_2 ( 7 b_3-6)}}
      {2 (1 + b_1 + b_2) (1 +
     b_1 (1 + b_2) + (2 + b_2) b_3)}.  \\
\end{cases}
\end{equation}

\begin{remark}\label{Re5.1}
It can be seen from \eqref{u1star} and \eqref{u2star} that $(u_1^*, v_1^*, w_1^*)$  and $(u_2^*, v_2^*, w_2^*)$ are continuous with respect
to $b_1, b_2$ and $ b_3$  for any $b_1, b_2, b_3 \geq 0.$  If $b_1=b_3=0,$ we have
\begin{equation}\label{solu}
\begin{cases}
u_1^* =1,\\
v_1^*=\frac{b_2^2+2b_2+4-b_2 \sqrt{(b_2+2)(b_2+10)}}{2(b_2+1)},\\
w_1^*=\frac{2-b_2+\sqrt{(b_2+2)(b_2+10)}}{2(b_2+1)},\\
\end{cases}
\end{equation}
and
\begin{equation*}
\begin{cases}
u_2^* =1,\\
v_2^*=\frac{b_2^2+2b_2+4+b_2 \sqrt{(b_2+2)(b_2+10)}}{2(b_2+1)},\\
w_2^*=\frac{2-b_2-\sqrt{(b_2+2)(b_2+10)}}{2(b_2+1)}.\\
\end{cases}
\end{equation*}
Clearly, if $0<b_2<\sqrt 2$, one has
\begin{equation}\label{eqff}
u_1^*, v_1^*, w_1^*>0 \mbox{ and } u_2^*>0, v_2^*>0, w_2^*<0.
\end{equation}
Therefore, by the continuity,  we still have \eqref{eqff} if $b_1$  and $b_3$
are small enough and $0<b_2<\sqrt 2$. In this case,  we only  have one positive coexistence steady state $(u_1^*, v_1^*, w_1^*).$
\end{remark}

\noindent \textbf{Acknowledgments.}
 We are deeply grateful to  the referees for carefully reading our paper and giving us various corrections and very insightful suggestions/comments, which greatly improve the precision of our results and exposition of our manuscript.
The research of H.Y. Jin was supported by
the NSF of China (No. 11871226) and the Fundamental Research Funds for the Central Universities. The research of Z.A. Wang  was partially supported by the Hong Kong RGC GRF grant No. 15306121 and the 2020 Hong Kong Scholars Program. The research of L. Wu  was supported by the 2020 Hong Kong Scholars Program  (Project ID P0031250 and Primary Work Programme YZ3S).

\end{document}